\DeclareSymbolFont{largesymbols}{OMX}{cmex}{m}{n}
\def\Ddots{\mathinner{\mkern1mu\raise\p@
\vbox{\kern7\p@\hbox{.}}\mkern2mu
\raise4\p@\hbox{.}\mkern2mu\raise7\p@\hbox{.}\mkern1mu}}
\newtheorem*{thA}{Theorem A}
\newtheorem*{questionsa}{Questions A}
\newtheorem*{questionsb}{Questions B}
\newtheorem*{questionsc}{Questions C}
\def\XXint#1#2#3{{\setbox0=\hbox{$#1{#2#3}{\int}$}
\vcenter{\hbox{$#2#3$}}\kern-.5\wd0}}
\begin{document}

\newtheorem{definition}{Definition}
\newtheorem{theorem}[definition]{Theorem}
\newtheorem{proposition}[definition]{Proposition}
\newtheorem{conjecture}[definition]{Conjecture}
\def\theconjecture{\unskip}
\newtheorem{corollary}[definition]{Corollary}
\newtheorem{lemma}[definition]{Lemma}
\newtheorem{claim}[definition]{Claim}
\newtheorem{sublemma}[definition]{Sublemma}
\newtheorem{observation}[definition]{Observation}
\theoremstyle{definition}

\newtheorem{notation}[definition]{Notation}
\newtheorem{remark}[definition]{Remark}
\newtheorem{question}[definition]{Question}

\newtheorem{example}[definition]{Example}
\newtheorem{problem}[definition]{Problem}
\newtheorem{exercise}[definition]{Exercise}
 \newtheorem{thm}{Theorem}
 \newtheorem{cor}[thm]{Corollary}
 \newtheorem{lem}{Lemma}[section]
 \newtheorem{prop}[thm]{Proposition}
 \theoremstyle{definition}
 \newtheorem{dfn}[thm]{Definition}
 \theoremstyle{remark}
 \newtheorem{rem}{Remark}
 \newtheorem{ex}{Example}
 \numberwithin{equation}{section}
\def\C{\mathbb{C}}
\def\R{\mathbb{R}}
\def\Rn{{\mathbb{R}^n}}
\def\Rns{{\mathbb{R}^{n+1}}}
\def\Sn{{{S}^{n-1}}}
\def\M{\mathbb{M}}
\def\N{\mathbb{N}}
\def\Q{{\mathbb{Q}}}
\def\Z{\mathbb{Z}}
\def\X{\mathbb{X}}
\def\Y{\mathbb{Y}}
\def\F{\mathcal{F}}
\def\L{\mathcal{L}}
\def\S{\mathcal{S}}
\def\supp{\operatorname{supp}}
\def\essi{\operatornamewithlimits{ess\,inf}}
\def\esss{\operatornamewithlimits{ess\,sup}}

\numberwithin{equation}{section}
\numberwithin{thm}{section}
\numberwithin{definition}{section}
\numberwithin{equation}{section}

\def\earrow{{\mathbf e}}
\def\rarrow{{\mathbf r}}
\def\uarrow{{\mathbf u}}
\def\varrow{{\mathbf V}}
\def\tpar{T_{\rm par}}
\def\apar{A_{\rm par}}

\def\reals{{\mathbb R}}
\def\torus{{\mathbb T}}
\def\t{{\mathcal T}}
\def\heis{{\mathbb H}}
\def\integers{{\mathbb Z}}
\def\z{{\mathbb Z}}
\def\naturals{{\mathbb N}}
\def\complex{{\mathbb C}\/}
\def\distance{\operatorname{distance}\,}
\def\support{\operatorname{support}\,}
\def\dist{\operatorname{dist}\,}
\def\Span{\operatorname{span}\,}
\def\degree{\operatorname{degree}\,}
\def\kernel{\operatorname{kernel}\,}
\def\dim{\operatorname{dim}\,}
\def\codim{\operatorname{codim}}
\def\trace{\operatorname{trace\,}}
\def\Span{\operatorname{span}\,}
\def\dimension{\operatorname{dimension}\,}
\def\codimension{\operatorname{codimension}\,}
\def\nullspace{\scriptk}
\def\kernel{\operatorname{Ker}}
\def\ZZ{ {\mathbb Z} }
\def\p{\partial}
\def\rp{{ ^{-1} }}
\def\Re{\operatorname{Re\,} }
\def\Im{\operatorname{Im\,} }
\def\ov{\overline}
\def\eps{\varepsilon}
\def\lt{L^2}
\def\diver{\operatorname{div}}
\def\curl{\operatorname{curl}}
\def\etta{\eta}
\newcommand{\norm}[1]{ \|  #1 \|}
\def\expect{\mathbb E}
\def\bull{$\bullet$\ }

\def\blue{\color{blue}}
\def\red{\color{red}}

\def\xone{x_1}
\def\xtwo{x_2}
\def\xq{x_2+x_1^2}
\newcommand{\abr}[1]{ \langle  #1 \rangle}

\newcommand{\Norm}[1]{ \left\|  #1 \right\| }
\newcommand{\set}[1]{ \left\{ #1 \right\} }
\newcommand{\ifou}{\raisebox{-1ex}{$\check{}$}}
\def\one{\mathbf 1}
\def\whole{\mathbf V}
\newcommand{\modulo}[2]{[#1]_{#2}}
\def \essinf{\mathop{\rm essinf}}
\def\scriptf{{\mathcal F}}
\def\scriptg{{\mathcal G}}
\def\m{{\mathcal M}}
\def\scriptb{{\mathcal B}}
\def\scriptc{{\mathcal C}}
\def\scriptt{{\mathcal T}}
\def\scripti{{\mathcal I}}
\def\scripte{{\mathcal E}}
\def\V{{\mathcal V}}
\def\scriptw{{\mathcal W}}
\def\scriptu{{\mathcal U}}
\def\scriptS{{\mathcal S}}
\def\scripta{{\mathcal A}}
\def\scriptr{{\mathcal R}}
\def\scripto{{\mathcal O}}
\def\scripth{{\mathcal H}}
\def\scriptd{{\mathcal D}}
\def\scriptl{{\mathcal L}}
\def\scriptn{{\mathcal N}}
\def\scriptp{{\mathcal P}}
\def\scriptk{{\mathcal K}}
\def\frakv{{\mathfrak V}}
\def\v{{\mathcal V}}
\def\C{\mathbb{C}}
\def\D{\mathcal{D}}
\def\R{\mathbb{R}}
\def\Rn{{\mathbb{R}^n}}
\def\rn{{\mathbb{R}^n}}
\def\Rm{{\mathbb{R}^{2n}}}
\def\r2n{{\mathbb{R}^{2n}}}
\def\Sn{{{S}^{n-1}}}
\def\bbM{\mathbb{M}}
\def\N{\mathbb{N}}
\def\Q{{\mathcal{Q}}}
\def\Z{\mathbb{Z}}
\def\F{\mathcal{F}}
\def\L{\mathcal{L}}
\def\G{\mathscr{G}}
\def\ch{\operatorname{ch}}
\def\supp{\operatorname{supp}}
\def\dist{\operatorname{dist}}
\def\essi{\operatornamewithlimits{ess\,inf}}
\def\esss{\operatornamewithlimits{ess\,sup}}
\def\dis{\displaystyle}
\def\dsum{\displaystyle\sum}
\def\dint{\displaystyle\int}
\def\dfrac{\displaystyle\frac}
\def\dsup{\displaystyle\sup}
\def\dlim{\displaystyle\lim}
\def\bom{\Omega}
\def\om{\omega}

\author[J. Tan]{Jiawei Tan}
\address{Jiawei Tan:
School of Mathematical Sciences \\
Beijing Normal University \\
Laboratory of Mathematics and Complex Systems \\
Ministry of Education \\
Beijing 100875 \\
People's Republic of China}
\email{jwtan@mail.bnu.edu.cn}

\author[Q. Xue]{Qingying Xue$^{*}$}
\address{Qingying Xue:
	School of Mathematical Sciences \\
	Beijing Normal University \\
	Laboratory of Mathematics and Complex Systems \\
	Ministry of Education \\
	Beijing 100875 \\
	People's Republic of China}
\email{qyxue@bnu.edu.cn}

\keywords{pseudo-differential operators, commutators, sparse operators, modular inequalities \\
\indent{{\it {2020 Mathematics Subject Classification.}}} Primary 42B20,
Secondary 42B35.}

\thanks{The authors were partly supported by the National Key R\&D Program of China (No. 2020YFA0712900) and NNSF of China (No. 12271041).
\thanks{$^{*}$ Corresponding author, e-mail address: qyxue@bnu.edu.cn}}

\date{\today}
\title[ THE MULTILINEAR PSEUDO-DIFFERENTIAL OPERATORS  ]
{\bf Quantitative weighted estimates for the multilinear pseudo-differential operators in function spaces}

\begin{abstract}
In this paper, the weighted estimates for multilinear pseudo-differential operators were systematically studied in rearrangement invariant Banach and quasi-Banach spaces. These spaces contain the Lebesgue space, the classical Lorentz space and Marcinkiewicz space as typical examples. More precisely, the weighted boundedness and weighted modular estimates, including the weak endpoint case, were established for multilinear pseudo-differential operators and their commutators.  As applications, we show that the above results also hold for the multilinear Fourier multipliers, multilinear square functions, and a class of multilinear Calder\'{o}n-Zygmund  operators.
\end{abstract}\maketitle

\section{Introduction and main results}

The main purpose of the article is to establish the weighted boundedness of the multilinear pseudo-differential operators in the rearrangement invariant Banach and quasi-Banach spaces, and establish the associated modular inequalities.

The study of the pseudo-differential operators can be traced back to the celebrated works of Kohn and Nirenberg \cite{koh} and H\"{o}rmander \cite{hor2}. As was well known, the pseudo-differential operators play an important role in Harmonic analysis as well as in other fields, such as PDEs and mathematical physics. Particularly, the theory of the pseudo-differential operators has not only been widely used in quantum field and exponential theory, but also played an indispensable role in the first proof of the Atiyah-Singer index theorem \cite{ati}. Further important applications were given in the works of H\"{o}rmander \cite{hor1} and  Fefferman and Kohn \cite{fef}.  A generalized class of pseudo-differential operators related to hypoelliptic equations was introduced in \cite{hor1} and applied to study the existence and regularity of the solutions.  By using the tools of microlocal analysis and the pseudo-differential operators on $\mathbb{R}^3,$ Fefferman and Kohn \cite{fef} obtained the optimal H\"{o}lder estimates for the Bergman projection. The corresponding estimates for the Szeg\"{o} projection on 3-D CR manifolds of finite type were also given in \cite{fef}.

We now consider the boundedness of the pseudo-differential operators on Lebesgue spaces and Hardy spaces. The minimum smoothness and decay conditions assumed on the symbols to guarantee their boundedness in these function spaces are what we are interested in.
To make the statement of known results clearly, we need to introduce some definitions and briefly describe the literature of this problem.

\begin{definition}[\textbf{H\"{o}rmander’s class}]\label{def1.1}
Let $m \in \mathbb{N},$ $r \in \mathbb{R}$ and $0\leq \rho, \delta \leq 1,$ $\sigma$ is a smooth function defined on $\mathbb{R}^n \times \mathbb{R}^{m n}$.
We say $\sigma \in S_{\rho, \delta}^r(n, m)$ if for each triple of multi-indices $\alpha:=\left(\alpha_1, \ldots, \alpha_n\right)$ and $\beta_1, \ldots, \beta_m$, there exists a constant $C_{\alpha, \beta}$ such that
$$
\left|\partial_x^\alpha \partial_{\xi_1}^{\beta_1} \cdots \partial_{\xi_m}^{\beta_m} \sigma(x, \vec{\xi})\right| \leq C_{\alpha, \beta}\left(1+\left|\xi_1\right|+\cdots+\left|\xi_m\right|\right)^{r-\rho \sum_{j=1}^m\left|\beta_j\right|+\delta|\alpha|}.
$$
\end{definition}

\begin{definition}[\textbf{$m$-linear pseudo-differential operator}]\label{def1.2}
Given a symbol $\sigma$, $m \in \mathbb{N},$ the $m$-linear pseudo-differential operator $T_\sigma$ is given by
$$ T_{\sigma}(\vec{f})(x)=\int_{(\mathbb{R}^n)^m} \sigma(x,\vec{\xi}) e^{2 \pi i x\cdot(\xi_1+\cdots \xi_m)} \widehat{f}_1(\xi_1) \cdots \widehat{f}_m(\xi_m) d\vec{\xi},$$
where $\widehat{f}$ is the Fourier transform  of the function $f$  defined by
 $$\widehat{f}(\xi)=\int_{\mathbb{R}^n} f(x) e^{-2\pi i x\cdot \xi} dx.$$
\end{definition}

In the linear case $m = 1,$ H\"{o}rmander \cite{hor3}, Kumano-Go \cite{kum}, Calder\'{o}n and Vaillancourt \cite{cal1}  proved the local and global $L^2$ bounds of pseudo-differential operators in the 1970s. In 1976, Rodino \cite{rod} constructed a symbolic function such that the pseudo-differential operator is not bounded on $L^2$.
Later on, using the method of the almost orthogonality principle, Hounie \cite{hou} gave an equivalence description between the indexes of H\"{o}rmander class and the $L^2$ boundedness of the pseudo-differential operators. As a generalization of the classical H\"{o}rmander class, Coifman and Meyer \cite{coi} studied a class of symbols with Dini type conditions and gave a sufficient and necessary condition  for the $L^p$ boundedness of the pseudo-differential operators. Subsequently, Nagase \cite{nag} and Bourdaud \cite{bou} respectively improved Coifman and Meyer's results under two more general conditions assumed on the symbols. In the endpoint cases, Alvarez and Hounie \cite{alv} demonstrated
 that if $\sigma \in S_{\rho, \delta}^r$ with $ 0<\rho \leq 1,0 \leq \delta<1$ and $r=\frac{n}{2}(\rho-1+\min \{0, \rho-\delta\})$, then $T_\sigma$ is of weak type $(1,1)$ and bounded from $H^1$ to $L^1,$ where $H^1$ denotes the classical Hardy spaces.

Great achievements have been made on the study of weighted boundedness of $T_\sigma$. Let $w$ be a Muckenhoupt $A_p$ weight with $w \in A_{p / 2}(2 \leq p<\infty)$ and $\sigma \in S_{\rho, \delta}^{-n(1-\rho) / 2}(n, 1)$ with $0<\delta<\rho<1,$  the weighted boundedness of $T_\sigma$ on $L^p(w)$ has been obtained by Chanillo and Torchinksy in \cite{cha}. Subsequently, Michalowski, Rule and Staubach \cite{mic} considered the above condition with $\rho =\delta$. Recently, Beltran and Cladek \cite{bel} established a sparse domination of the pseudo-differential operators and consequently obtained a quantitative weighted estimate.

We now turn our attention to the study of multilinear pseudo-differential operators. The theory of bilinear pseudo-differential operators was first originated from the famous works of Coifman and Meyer \cite{coi1}, in which they used the bilinear pseudo-differential operators as a typical model for the Calder\'{o}n commutator. In \cite{coi,coi2}, when  $\rho= 1$, Coifman and Meyer showed that $T_\sigma$ is bounded from $L^{p_1} \times \cdots \times L^{p_m}$ to $L^p$ for symbol $\sigma \in S_{1,0}^0(n, m)$ with $1<p, p_1, \ldots, p_m<\infty$ and $\frac{1}{p_1}+\cdots+\frac{1}{p_m}=\frac{1}{p}.$ Grafakos and Torres \cite{gra} considered the boundedness of multilinear pseudo-differential operators $T_\sigma$ with symbol $\sigma$ belongs to $S_{1,1}^0(n, m).$
There is a fundamental question in this field. It is known that the multilinear Calder\'{o}n-Zygmund operators enjoy almost all the properties of the linear Calder\'{o}n-Zygmund operators, one may ask whether the multilinear pseudo-differential operators also have such properties? However, B\'{e}nyi and Torres in \cite{ben1} showed that there exist symbols in $S_{1,1}^0(n, 2)$ such that $T_\sigma$ fails to be bounded from $L^{p_1} \times L^{p_2}$ to $L^p$ for $1 \leq p_1, p_2, p<\infty$ with $\frac{1}{p_1}+\frac{1}{p_2}=\frac{1}{p}.$

The situation becomes complicated and subtle when $\rho\neq 1$. We summarize these cases and classify the boundedness of pseudo-differential operators according to the parameters values of the symbol. See Table 1 below. We refer the readers to see \cite{li, cao} and the references therein for some other weighted boundedness properties of multilinear pseudo-differential operators and their commutators.

\begin{table}[htb]
\begin{center}
\caption{The symbol of multilinear pseudo-differential operators}
\label{table:1}
\begin{tabular}{|c|c|c|c|}
\hline  parameters   & $L^2\times L^2\rightarrow L^1$ & $L^\infty\times L^\infty\rightarrow L^\infty$ & $L^\infty\times L^\infty\rightarrow BMO$ \\
\hline   $\rho=\delta=r=0$ & \XSolidBrush \cite{ben1}&  & \XSolidBrush \cite{miy}\\
\hline   \makecell[c]{$0\leq\delta\leq \rho\leq1,\delta <1  $\\$r<-n(1-\rho)$ }   & \Checkmark \text{when} $r<-\frac{n(1-\rho)}{2}$ \cite{ben3}  & \Checkmark \cite{ben3}& \Checkmark\\
\hline    \makecell[c]{$\delta =\rho=0 $\\$r>-n$ } & \XSolidBrush \cite{miy}&  &\XSolidBrush \cite{miy}\\
\hline  \makecell[c]{$0\leq\delta\leq \rho\leq1 $\\$r=-n(1-\rho)$ }   & \Checkmark \text{when} $\rho<1, \delta=\rho $ \cite{miy2} &  & \makecell[c]{\Checkmark when \\$0\leq\delta\leq \rho<\frac{1}{2} $\cite{nai} \\ or\\ $0\leq\rho<1, \delta=\rho $ \cite{miy2}} \\
\hline
\end{tabular}
\end{center}
\end{table}

In order to state more known results, we give a brief introduction on sparse domination. In the last decade, the problem of sharp weighted norm estimate has led to a deep understanding of the classical Calder\'{o}n-Zygmund operators in the form of sparse domination. To be more precise, Lerner \cite{ler2, ler3} first showed that any standard Calder\'{o}n-Zygmund operator $T$ which satisfies
a H\"{o}lder-Lipschitz condition can be dominated pointwisely by a finite number of sparse operators in the following way
$$
|T f(x)| \leq \sup _{\mathcal{S}}\mathcal{A}_{\mathcal{S}} f(x),
\text{where} ~
 \mathcal{A}_{\mathcal{S}} f(x)=\sum_{Q \in \mathcal{S}} \frac{1}{|Q|} \int_Q|f| \chi_Q(x),
$$
where $\mathcal{S}$ is a sparse family of cubes (see Section $2.2$ below). This estimate almost immediately leads to a simple proof of the sharp dependence of the constant in the relevant weighted norm inequalities, the $A_2$ conjecture, which has been actively investigated for more than a decade (see \cite{hyt0,ler3,lac}). Soon after, Lerner's techniques were widely used for many other operators as well as spaces (cf. e.g. \cite{cej, con, con1}).

We need to introduce the $m$-linear commutators of pseudo-differential operator.
\begin{definition}[\textbf{$m$-linear commutators of pseudo-differential operator}]\label{def1.3}
For the pseudo-differential operator $T_\sigma$ with symbol $\sigma$ and locally integrable functions $\mathbf{b}=\left(b_1, \ldots, b_m\right)$ with $m \in \mathbb{N},$ the $m$-linear commutator of $T_\sigma$ is defined by

$$
T_{\sigma, \Sigma \mathbf{b}}(\vec{f})(x)=\sum_{j=1}^{m}\left[b_{j}, T_{\sigma}\right]_{j}(\vec{f})(x),
$$
where each term is the commutator of $b_{j}$ and $T_{\sigma}$ in the $j$-th entry of $T_{\sigma}$, that is
$$
\left[b_{j}, T_{\sigma}\right]_{j}(\vec{f})(x)=b_{j}(x) T_{\sigma}(\vec{f})(x)-T_{\sigma}\left(f_{1}, \ldots, b_{j} f_{j}, \ldots, f_{m}\right)(x).
$$
\end{definition}
The commutators given in Definition \ref{def1.3} were originally introduced by P\'{e}rez and Torres \cite{per1} in the study of the $m$-linear Calder\'{o}n-Zygmund operators. In 2009, Lerner et al. \cite{ler1} considered the weighted strong-type as well as sharp weak-type estimates for the $m$-linear commutators of the Calder\'{o}n-Zygmund operators. In the linear case $m = 1,$  for any $r<0$ and $ 0 \leq \delta< \rho <1$, the $L^p$ boundedness of the operators $T_{\sigma, b}$ with $\sigma \in S_{\rho, \delta}^r$ and $b\in \mathrm{B M O}$ has been studied by Chanillo \cite{cha1}. For an overview of this area,  we refer to \cite{hun,tang} and references therein.
It is worth mentioning that in \cite{cao}, Cao, Xue and Yabuta obtained some sharp results on multilinear pseudo-differential operators and their commutators, including local exponential estimates, weighted mixed weak type inequality and sharp weighted estimates for $T_\sigma$ and $T_{\sigma, \Sigma \mathbf{b}}$ by using the method of sparse domination. More recent work can be found in \cite{ci} in the space of generalized form. One of the main result in \cite{cao} is as follows:

\begin{thA}[\cite{cao}]
 Assume that $\sigma \in S_{\rho, \delta}^r(n, m)$ with $\rho, \delta \in[0,1]$ and $r<m n(\rho-1)$. Let $\frac{1}{p}=\frac{1}{p_1}+\cdots+\frac{1}{p_m}$ with $1<p_i<\infty, i=1, \ldots, m$. If $\vec{\omega} \in A_{\vec{p}}$ and $\mathbf{b} \in \mathrm{B M O}^m$, then
$$
\left\|T_\sigma\right\|_{L^{p_1}\left(\omega_1\right) \times \cdots \times L^{p_m}\left(\omega_m\right) \rightarrow L^p\left(\nu_{\vec{\omega}}\right)} \leq c_{n, \vec{p}} \mathscr{N}_{\text {weak }}[\vec{\omega}]_{A_{\vec{p}}}^{\beta(\vec{p})}
$$
and
$$
\begin{array}{r}
\left\|T_{\sigma, \Sigma \mathbf{b}}\right\|_{L^{p_1}\left(\omega_1\right) \times \cdots \times L^{p_m}\left(\omega_m\right) \rightarrow L^p\left(\nu_{\vec{\omega}}\right)} \leq c_{n, \vec{p}} \mathscr{N}_{w e a k}\|\mathbf{b}\|_{\mathrm{B M O}}[\vec{\omega}]_{A_{\vec{p}}}^{2 \beta(\vec{p})} ,\\
\end{array}
$$
 where $\mathscr{N}_{\text {weak }}=\left\|T_\sigma\right\|_{L^1 \times \cdots \times L^1 \rightarrow L^{1 / m, \infty}}, \text { and } \beta(\vec{p})=\max\limits _{1 \leq i \leq m}\left\{1, \frac{p_i^{\prime}}{p}\right\},$ for the definition of
 $\vec{w}$ and $A_{\vec{p}}$ see \cite[p. 1232]{ler1}.
\end{thA}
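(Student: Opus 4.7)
The plan is to proceed through pointwise sparse domination followed by the known quantitative weighted bounds for multilinear sparse forms. The first step is to produce, for every $\vec f$, a sparse family $\mathcal S$ satisfying
$$
|T_\sigma(\vec f)(x)| \lesssim \mathscr N_{\text{weak}} \sum_{Q \in \mathcal S} \prod_{j=1}^{m} \Big\langle |f_j|\Big\rangle_Q \chi_Q(x),
$$
with $\mathscr N_{\text{weak}}=\|T_\sigma\|_{L^1\times\cdots\times L^1\to L^{1/m,\infty}}$. The hypothesis $r<mn(\rho-1)$ is what makes this constant finite: it provides enough off-diagonal kernel decay to execute a multilinear Calder\'on--Zygmund decomposition on each Littlewood--Paley piece of the symbol and sum the resulting series.

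To actually produce the sparse family I would adapt the Hyt\"onen--Lacey--P\'erez scheme to the $m$-linear setting. Introduce the grand maximal truncation
$$
\mathcal M_{T_\sigma}^\#(\vec f)(x) = \sup_{Q\ni x}\, \esss_{y\in Q} \, \bigl|T_\sigma\bigl(\vec f \chi_{(3Q)^c}\bigr)(y)\bigr|,
$$
show that $\mathcal M_{T_\sigma}^\#$ obeys a weak $L^1\times\cdots\times L^1 \to L^{1/m,\infty}$ bound with constant comparable to $\mathscr N_{\text{weak}}$ via a mean-value comparison on concentric cubes together with the off-diagonal symbol estimate, and then run the principal-cubes stopping time on the set where either $\mathcal M_{T_\sigma}^\#(\vec f)$ or the local product average is large. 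The sparse pointwise bound follows. Inserting it into the sharp quantitative estimate $\|\mathcal A_{\mathcal S}\|_{L^{p_1}(\omega_1)\times\cdots\times L^{p_m}(\omega_m)\to L^p(\nu_{\vec\omega})} \lesssim [\vec\omega]_{A_{\vec p}}^{\beta(\vec p)}$ proved by Dami\'an--Lerner--P\'erez and its multilinear extensions gives the first conclusion.

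For the commutator, my plan is to iterate this sparse bound with a BMO oscillation trick: expanding $b_j(x)-b_j(y) = (b_j(x)-\langle b_j\rangle_Q)+(\langle b_j\rangle_Q -b_j(y))$ along each sparse cube produces a pointwise domination of $T_{\sigma,\Sigma\mathbf b}$ by sparse forms of $L(\log L)$-type once the BMO oscillations are absorbed via John--Nirenberg. The weighted bound for such forms picks up exactly one additional power of $[\vec\omega]_{A_{\vec p}}^{\beta(\vec p)}$ per commutator entry, via a reverse H\"older estimate on each component $\omega_i$, yielding the claimed exponent $2\beta(\vec p)$ together with the factor $\|\mathbf b\|_{\mathrm{BMO}}$.

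The main obstacle will be the quantitative control of $\mathcal M_{T_\sigma}^\#$ by $\mathscr N_{\text{weak}}$ in the multilinear pseudo-differential setting. Unlike standard Calder\'on--Zygmund operators, the kernel of $T_\sigma$ does not enjoy pointwise H\"older bounds of the usual form; one has to decompose $\sigma$ into dyadic frequency annuli, estimate each piece separately, and sum using the precise decay $r<mn(\rho-1)$. Tracking constants through this microlocal decomposition so that they collapse cleanly into the single quasi-norm $\mathscr N_{\text{weak}}$ is what makes the quantitative sparse domination delicate, and everything else in the proof is comparatively mechanical once that step is in hand.
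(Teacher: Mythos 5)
The paper does not prove Theorem~A: it quotes it as a prior result from \cite{cao}, and imports only the pointwise sparse domination of $T_\sigma$ and $T_{\sigma,\Sigma\mathbf b}$ (Lemma~\ref{lem1.1}, cited from \cite[Proposition~4.1]{cao}) as a black box before applying it in more general rearrangement-invariant function spaces. Your sketch — a Hyt\"onen--Lacey--P\'erez-type grand maximal truncation, a weak $(1,\dots,1;1/m)$ endpoint bound with constant $\mathscr N_{\text{weak}}$, a principal-cubes stopping time yielding a pointwise sparse bound, followed by the Dami\'an--Lerner--P\'erez sharp weighted estimate for multilinear sparse operators, and a BMO-oscillation/John--Nirenberg expansion for the commutator contributing the second power of $[\vec\omega]_{A_{\vec p}}^{\beta(\vec p)}$ — is exactly the scheme of the cited source, so your proposal is correct and follows essentially the same route.
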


Inspired by the results as above, it is natural to ask the following questions.
\begin{questionsa}[general Banach space]
Whether multilinear operators, such as multilinear pseudo-differential operators $T_\sigma$ and their commutators $T_{\sigma, \Sigma \mathbf{b}}$, multilinear Calder\'{o}n-Zygmund singular integral operators, are bounded in more general spaces, such as Lorentz spaces, Zygmund spaces? Furthermore, can we find a unified way to deal with these operators and spaces?
\end{questionsa}
\begin{questionsb}[general quasi-Banach space]
Can we generalize the spaces in Theorem A to some quasi-Banach spaces ?
\end{questionsb}
\begin{questionsc}[modular inequalities]
What kinds of modular inequalities do multilinear operators $T_\sigma$ and $T_{\sigma, \Sigma \mathbf{b}}$ enjoy ?
\end{questionsc}

In this article, we give a firm answer to these three questions. Since classical Lorentz spaces and Zygmund spaces are all belonging to the rearrangement invariant Banach function spaces, this kind of function spaces come naturally into our research scope. For the case of the quasi-spaces, accordingly, we call the rearrangement invariant quasi-Banach function spaces. A detailed definition of these two kinds of spaces is given in the next section. The study of such spaces has a long history, in 1955, Lorentz \cite{lor} first gave that a sufficient and necessary condition for Hardy-Littlewood maximal operator $M$ to be bounded on rearrangement invariant Banach function space $\mathbb{X}$ is $p_{\mathbb{X}}>1.$ Afterwards, Boyd \cite{boy} generalized this result to the Hilbert transform $H$, and proved that $H$ is bounded on $\mathbb{X}$ if and only if $1<p_{\mathbb{X}} \leqslant q_{\mathbb{X}}<\infty .$  Here $p_{\mathbb{X}}$ and $q_{\mathbb{X}}$ denote the Boyd indices of $\mathbb{X}$ (see Section 2.3 below). For the weighted case, in 2006, Curbera et al. \cite{cur} obtained the boundedness of Calder\'{o}n-Zygmund operator in the weighted rearrangement invariant quasi-Banach function spaces with the same restrictions on Boyd's index. In \cite{and}, Anderson and Hu obtained the quantitative weighted boundedness of the maximal truncated Calder\'{o}n-Zygmund operators on $\mathbb{X}$ using sparse domination approach. For other works related to these spaces, we refer to \cite{ben0,edm} and references therein.

Now we are ready to state our main results.

\begin{theorem}\label{thm1.1}
 Let $\mathbb{X},\mathbb{X}_i$ be rearrangement invariant Banach function spaces with
$1<p_{\mathbb{X}},p_{\mathbb{X}_i}\leq q_{\mathbb{X}}, q_{\mathbb{X}_i}<\infty$ for $i=1,2\cdots, m .$   Let $\sigma \in S_{\rho,\delta}^r(n,m)$ with $0 \leq \rho,\delta \leq 1, r < mn(\rho-1).$ Suppose that $m$-product operator $P_m$ maps
$\overline{\mathbb{X}}_1\times \cdots \times\overline{\mathbb{X}}_m$ to $\overline{\mathbb{X}},$ then for every
$ w \in A_{\min\limits_{i}{\{p_{\mathbb{X}_i}\}}},$ it holds that
\begin{equation*}
\begin{aligned}
\left\| T_{\sigma}(\vec{f})\right\|_{\mathbb{X}(w)} \lesssim [w]_{A_\infty}  \prod\limits_{i=1}\limits^m [w]_{A_{p_{\mathbb{X}_i}}}^{\frac{1}{p_{\mathbb{X}_i}}} \left\| f_i\right\|_{\mathbb{X}_i(w)}.
\end{aligned}
\end{equation*}
\end{theorem}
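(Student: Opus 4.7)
The plan is to combine the pointwise sparse domination of $T_\sigma$ already available in the literature with a quantitative multilinear Rubio de Francia extrapolation into the rearrangement-invariant scale.

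First, I would invoke the pointwise sparse domination proved in \cite{cao} for symbols $\sigma\in S^{r}_{\rho,\delta}(n,m)$ with $r<mn(\rho-1)$: for each $\vec{f}=(f_{1},\dots,f_{m})$ of bounded, compactly supported functions there is a sparse family $\mathcal{S}=\mathcal{S}(\vec{f})$ with
$$ |T_\sigma(\vec f)(x)| \lesssim \mathcal{A}_\mathcal{S}(\vec f)(x):=\sum_{Q\in\mathcal{S}} \Bigl(\prod_{i=1}^m \langle|f_i|\rangle_Q\Bigr)\chi_Q(x). $$
Since $\mathbb{X}(w)$ is a Banach lattice, it suffices to prove the theorem with $\mathcal{A}_\mathcal{S}$ in place of $T_\sigma$, with constants uniform in $\mathcal{S}$.

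Second, I would establish the corresponding weighted Lebesgue bound for $\mathcal{A}_{\mathcal{S}}$. Combining Theorem A with the $A_\infty$--improvement of Hyt\"onen and P\'erez \cite{hyt0} and the sharp reverse H\"older inequality, one obtains for $1<p_i<\infty$, $1/p=\sum 1/p_i$, and $w\in A_{\min_i p_i}$ the inequality
$$ \|\mathcal{A}_\mathcal{S}(\vec f)\|_{L^p(w)} \lesssim [w]_{A_\infty}\prod_{i=1}^m[w]_{A_{p_i}}^{1/p_i}\prod_{i=1}^m \|f_i\|_{L^{p_i}(w)}. $$

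Third, I would lift this $L^p$-bound to the rearrangement-invariant scale. The Boyd-index conditions $1<p_{\mathbb{X}_i}\leq q_{\mathbb{X}_i}<\infty$ place each $\mathbb{X}_i$ in the interpolation range between two usable $L^{p_i}$-spaces, while the hypothesis that the $m$-fold product $P_m$ maps $\overline{\mathbb{X}}_1\times\cdots\times\overline{\mathbb{X}}_m$ into $\overline{\mathbb{X}}$ provides the generalized H\"older-type identity needed to assemble the individual estimates. Performing $m$ successive one-variable quantitative Rubio de Francia extrapolations, each executed after freezing the remaining entries via $P_m$, with the choice $p_i=p_{\mathbb{X}_i}$, transfers the Lebesgue estimate to the target bound on $\mathbb{X}(w)$.

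The main obstacle is tracking the quantitative constant through the iterated extrapolation so that the final weight dependence is exactly $[w]_{A_\infty}\prod_i[w]_{A_{p_{\mathbb{X}_i}}}^{1/p_{\mathbb{X}_i}}$. The delicate point is the evaluation of the Rubio de Francia algorithm on the weighted associate space $\mathbb{X}'_i(w)$: the norm of the Hardy-Littlewood maximal operator on $\mathbb{X}'_i(w)$ must be quantified by $[w]_{A_{p_{\mathbb{X}_i}}}^{1/p_{\mathbb{X}_i}}$, which feeds through each of the $m$ iterations and produces the product factor in the statement, while the $[w]_{A_\infty}$ factor is inherited unchanged from the sparse-operator step. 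Breaking the $m$-linear extrapolation into $m$ one-variable extrapolations is the cleanest route to keep this bookkeeping sharp and to make the role of the product hypothesis on $P_m$ transparent.
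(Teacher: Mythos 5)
Your first step, sparse domination of $T_\sigma$ by $\mathcal{A}_\mathcal{S}$ via Lemma \ref{lem1.1}, is exactly what the paper uses. After that, however, your route diverges fundamentally from the paper's and, as written, has genuine gaps.

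The paper never passes through a weighted $L^p$ bound and never extrapolates. It works directly with the sparse form inside $\mathbb{X}(w)$: it writes $\|T_\sigma(\vec f)\|_{\mathbb{X}(w)}$ via the Lorentz--Luxemburg duality, bounds the dualized sparse sum by a Carleson-type expression, and invokes the Carleson embedding theorem (with the $A_\infty$ Carleson condition $\sum_{Q\subseteq R}w(Q)\leq\eta^{-1}[w]_{A_\infty}w(R)$) to obtain the single $[w]_{A_\infty}$ factor. Then the generalized H\"older inequality splits the remaining integral into $\|\mathcal{M}(\vec f)\|_{\mathbb{X}(w)}\cdot\|M_w^{\mathcal{D}}g\|_{\mathbb{X}'(w)}$, where $M_w^{\mathcal{D}}$ is bounded on $\mathbb{X}'(w)$ with a constant independent of $w$. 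Finally, the pointwise bound $\mathcal{M}(\vec f)\leq\prod_iMf_i$, the $P_m$ hypothesis, and Lemma \ref{lem1.2} (giving $\|M\|_{\mathbb{X}_i(w)\to\mathbb{X}_i(w)}\lesssim[w]_{A_{p_{\mathbb{X}_i}}}^{1/p_{\mathbb{X}_i}}$) produce the product factor.

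Your extrapolation plan has two concrete problems. First, structurally, it is unclear what ``$m$ successive one-variable Rubio de Francia extrapolations, each executed after freezing the remaining entries via $P_m$'' means for a multilinear operator whose target space has to change along the way; this is not a standard scheme, and you do not make it precise. Second, and more seriously, your constant bookkeeping is off: you say the maximal operator on the associate space $\mathbb{X}'_i(w)$ ``must be quantified by $[w]_{A_{p_{\mathbb{X}_i}}}^{1/p_{\mathbb{X}_i}}$''. But the Boyd indices of $\mathbb{X}'_i$ are $p_{\mathbb{X}'_i}=(q_{\mathbb{X}_i})'$ and $q_{\mathbb{X}'_i}=(p_{\mathbb{X}_i})'$, so Lemma \ref{lem1.2} applied to $\mathbb{X}'_i(w)$ gives $[w]_{A_{(q_{\mathbb{X}_i})'}}^{1/(q_{\mathbb{X}_i})'}$, not the factor you need. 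The paper gets $[w]_{A_{p_{\mathbb{X}_i}}}^{1/p_{\mathbb{X}_i}}$ precisely because it bounds $M$ on $\mathbb{X}_i(w)$ itself, not on the associate space, and it keeps the associate-space side of the duality tame by putting $M_w^{\mathcal{D}}g$ there instead. Without the direct Carleson-embedding-plus-duality argument, I do not see how your iterated extrapolation would yield the stated weight exponents; you acknowledge this obstacle but leave it unresolved, and that is exactly where the proof is missing.
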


\begin{remark}
We note that when $m = 1$ and $\mathbb{X} = L^p$, the result of Theorem \ref{thm1.1} coincides with the main conclusion in \cite{bel}, with a more accurate norm constant. When $m > 1$, Theorem \ref{thm1.1} is also a generalization of Theorem A, see Corollary \ref{cor1.1}.
\end{remark}

For the commutators of multilinear pseudo-differential operators, we have
\begin{theorem}\label{thm1.2}
Let $1<r<\infty, 1\leq i\leq m,$ $\mathbb{X},\mathbb{X}_i$ be rearrangement invariant Banach function spaces with $r<p_{\mathbb{X}},p_{\mathbb{X}_i}\leq q_{\mathbb{X}}, q_{\mathbb{X}_i}<\infty.$   Assume that $\sigma \in S_{\rho,\delta}^l(n,m)$ with $0 \leq \rho,\delta \leq 1, l < mn(\rho-1).$ Suppose that $P_m$ maps
$\overline{\mathbb{X}}_1\times \cdots \times\overline{\mathbb{X}}_m$ to $\overline{\mathbb{X}}$ and $w\in A_{p_0/r}, \vec{b} =\left(b_{1}, \ldots, b_{m}\right)\in \mathrm{BMO}^m,$ then there exist $q_i>1, i=1,2\cdots, m,$  such that

\begin{equation*}
\begin{aligned}
\left\| T_{\sigma,\Sigma \mathbf{b}}(\vec{f})\right\|_{\mathbb{X}(w)} \lesssim  \| \vec{b}\|_{\mathrm{BMO}}\cdot[w]_{A_\infty} \left([w]_{A_\infty}  \prod\limits_{i=1}\limits^m [w]_{A_{p_{\mathbb{X}_i}}}^{\frac{1}{p_{\mathbb{X}_i}}}+ \prod\limits_{i=1}\limits^m [w]_{A_{p_{{\mathbb{X}}_i}/r}}^{\frac{1}{q_ir}}    \right) \prod\limits_{i=1}\limits^m \left\| f_i\right\|_{\mathbb{X}_i(w)},
\end{aligned}
\end{equation*}
where $p_0=\min\limits_{i}{\{p_{\X_i}\}},$ $\| \vec{b}\|_{\mathrm{BMO}}:=\sup\limits_{1 \leq j \leq m}\left\|b_{j}\right\|_{\mathrm{BMO}}.$
\end{theorem}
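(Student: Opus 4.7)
The plan is to reduce Theorem \ref{thm1.2} to Theorem \ref{thm1.1} via a sparse domination for the multilinear commutator combined with a John--Nirenberg extraction of the BMO factors. The starting point is the sparse bound for $T_{\sigma,\Sigma\mathbf b}$ established in \cite{cao} under $\sigma\in S_{\rho,\delta}^l(n,m)$ with $l<mn(\rho-1)$: after expanding each $[b_j,T_\sigma]_j$ as in Definition \ref{def1.3}, one obtains, for a $\tfrac14$-sparse family $\mathcal S$ of dyadic cubes, a pointwise bound schematically of the form
$$\bigl|T_{\sigma,\Sigma\mathbf b}(\vec f)(x)\bigr|\lesssim\sum_{j=1}^m\Biggl(\sum_{Q\in\mathcal S}|b_j(x)-b_{j,Q}|\prod_{i=1}^m\langle|f_i|\rangle_Q\chi_Q(x)+\sum_{Q\in\mathcal S}\langle|b_j-b_{j,Q}|\,|f_j|\rangle_Q\prod_{i\neq j}\langle|f_i|\rangle_Q\chi_Q(x)\Biggr).$$

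Next, fix any $r$ with $1<r<p_0$. By the John--Nirenberg inequality, $\|b_j-b_{j,Q}\|_{L^{r'}(Q,dx/|Q|)}\lesssim\|b_j\|_{\mathrm{BMO}}$, so Hölder's inequality inside the inner average converts the second sum, up to a factor of $\|b_j\|_{\mathrm{BMO}}$, into a sparse operator whose $j$-th slot carries the $r$-th power average $\langle|f_j|^r\rangle_Q^{1/r}$ and whose other slots are unchanged. The first sum, via the usual Lerner-type decomposition of $b_j$ against the sparse skeleton, is dominated by $\|b_j\|_{\mathrm{BMO}}$ times the same sparse operator that controls $T_\sigma$ in Theorem \ref{thm1.1}. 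Thus the problem reduces to bounding two families of sparse $m$-linear operators in $\mathbb X(w)$: the standard one from Theorem \ref{thm1.1}, which contributes $[w]_{A_\infty}\prod_i[w]_{A_{p_{\mathbb X_i}}}^{1/p_{\mathbb X_i}}$, and an $r$-variant in which one coordinate has been replaced by its $r$-th power average. The latter is handled by the same scheme after rescaling $f_i\mapsto|f_i|^r$ and passing to the space $\mathbb X_i^{1/r}$, whose lower Boyd index is $p_{\mathbb X_i}/r>1$. The sharp reverse Hölder inequality for $w\in A_{p_0/r}\subseteq A_{p_{\mathbb X_i}/r}$ then produces the exponent $q_i>1$ and the factor $[w]_{A_{p_{\mathbb X_i}/r}}^{1/(q_ir)}$, while the hypothesis that $P_m$ maps $\overline{\mathbb X}_1\times\cdots\times\overline{\mathbb X}_m$ into $\overline{\mathbb X}$ assembles the per-coordinate estimates into an $\mathbb X(w)$ bound for the sparse operator. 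Multiplying by the outer $\|\vec b\|_{\mathrm{BMO}}[w]_{A_\infty}$ prefactor yields the stated constant.

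The main obstacle is the sharp quantitative tracking in the $r$-variant step. One must verify that passing to $\mathbb X_i^{1/r}$ preserves the product-mapping property $P_m\colon\overline{\mathbb X}_1\times\cdots\times\overline{\mathbb X}_m\to\overline{\mathbb X}$ needed to invoke the sparse core of Theorem \ref{thm1.1}, and that the dependence of the Boyd-index maximal bound on $[w]_{A_{p_{\mathbb X_i}/r}}$ factors through a single reverse-Hölder exponent $q_i$ uniformly across the coordinates. This is precisely where the strict inequalities $r<p_{\mathbb X_i}$ and $q_{\mathbb X_i}<\infty$, rather than only $r<p_0$, enter, and it explains the asymmetric appearance of the two terms on the right-hand side of the conclusion. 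A secondary subtlety is ensuring that the sparse bound imported from \cite{cao} can be invoked with the endpoint constant $\mathscr N_{\mathrm{weak}}$ absorbed into implicit constants, so that the quantitative powers of $[w]$ in the final estimate are not inflated.
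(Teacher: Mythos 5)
Your plan correctly identifies the starting point (the sparse domination of $T_{\sigma,\Sigma\mathbf b}$ by the operators $\mathcal A_{\mathcal S,b_j}$ and $\mathcal A^*_{\mathcal S,b_j}$ from \cite{cao}, recorded here as Lemma~\ref{lem1.1}) and the target (reduce to the $T_\sigma$ estimate of Theorem~\ref{thm1.1} plus an $r$-average variant). Your treatment of the $\mathcal A^*_{\mathcal S,b_j}$ term is also essentially the paper's: inside each cube, H\"older with exponent $r'$ on $b_j-b_{j,Q}$ together with John--Nirenberg extracts $\|b_j\|_{\mathrm{BMO}}$, and one is left with an $r$-sparse object controlled via boundedness of $M_r$ on $\mathbb X_i(w)$. (A small inaccuracy: after extracting the BMO factor, the paper upgrades \emph{all} slots to $\langle|f_i|^r\rangle_Q^{1/r}$ via Jensen's inequality, not only the $j$-th; that is why all $m$ factors $[w]_{A_{p_{\mathbb X_i}/r}}^{1/(q_ir)}$ appear. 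Also, the constants $q_i>1$ arise from the operator-norm estimate $\|M_r\|_{\mathbb X_i(w)\to\mathbb X_i(w)}$ in \cite{tan}, which reduces to Lemma~\ref{lem1.2} applied to $\mathbb X_i^{1/r}$, not from a reverse H\"older inequality for $w$.)

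The substantive gap is in your handling of the term $\mathcal A_{\mathcal S,b_j}$, the one with $|b_j(x)-b_{j,Q}|$ outside the averages. You assert that it ``is dominated by $\|b_j\|_{\mathrm{BMO}}$ times the same sparse operator that controls $T_\sigma$.'' No such pointwise domination exists: the function $x\mapsto\sum_Q|b_j(x)-b_{j,Q}|\prod_i\langle|f_i|\rangle_Q\chi_Q(x)$ cannot be bounded by $\|b_j\|_{\mathrm{BMO}}\,\mathcal A_{\mathcal S}(\vec f)(x)$, since $|b_j(x)-b_{j,Q}|$ is an unbounded function of $x$. What the paper actually does is pair $\mathcal A_{\mathcal S,b_j}(\vec f)$ against a dual function $g$ (via Lorentz--Luxemburg), apply the generalized Orlicz--H\"older inequality on each cube,
\begin{equation*}
\frac{1}{|Q|}\int_Q |b_j-b_{j,Q}|\,|g|\,w\,dx\;\lesssim\;\|b_j-b_{j,Q}\|_{\exp L,Q}\,\|gw\|_{L\log L,Q},
\end{equation*}
use $\|b_j-b_{j,Q}\|_{\exp L,Q}\lesssim\|b_j\|_{\mathrm{BMO}}$, and then invoke Lemma~\ref{lem1.3} to convert $\|gw\|_{L\log L,Q}$ into $[w]_{A_\infty}\inf_Q(M_w^{\mathcal D}(|g|^s))^{1/s}\langle w\rangle_Q$. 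It is precisely this $L\log L$ step that produces the second power of $[w]_{A_\infty}$ in the first summand of the conclusion. Your reduction, if it worked, would give the strictly smaller constant $[w]_{A_\infty}\prod_i[w]_{A_{p_{\mathbb X_i}}}^{1/p_{\mathbb X_i}}$ for that term; since no argument is offered and the pointwise reading is false, this step of the proposal does not stand. Secondarily, the role of the choice $1<s<(q_{\mathbb X})'$ (to keep $M_w^{\mathcal D}(|\cdot|^s)^{1/s}$ bounded on $\mathbb X'(w)$) is entirely absent from your sketch, and that choice is what actually closes the first term.
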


 When $\mathbb{X} $ is a space of rearrangement invariant quasi-Banach type, we obtain the following quantitative weighted estimates.
\begin{theorem}\label{thm1.3}
 Let $\mathbb{X} $ be a rearrangement invariant quasi-Banach function space with $p$-convex property for some $0<p\leq 1.$ For $i=1,2\cdots m$, let $\mathbb{X}_i$ be rearrangement invariant quasi-Banach function spaces with
$1<p_{\mathbb{X}},p_{\mathbb{X}_i}\leq q_{\mathbb{X}}, q_{\mathbb{X}_i}<\infty$ and each of them is $p_i$-convex for some $0<p_i \leq 1.$ Assume that $\sigma \in S_{\rho,\delta}^r(n,m)$ with $0 \leq \rho,\delta \leq 1, r < mn(\rho-1).$ Suppose that $P_m$ maps
$\overline{\mathbb{X}}_1\times \cdots \times\overline{\mathbb{X}}_m$ to $\overline{\mathbb{X}},$ then for every
$ w \in A_{\min\limits_{i}{\{p_{\mathbb{X}_i}\}}},$

\begin{equation*}
\begin{aligned}
\left\| T_{\sigma}(\vec{f})\right\|_{\mathbb{X}(w)} \lesssim [w]_{A_\infty}^{\frac{1}{p}}  \prod\limits_{i=1}\limits^m [w]_{A_{p_{\mathbb{X}_i}}}^{\frac{1}{p_{\mathbb{X}_i}}} \left\| f_i\right\|_{\mathbb{X}_i(w)}.
\end{aligned}
\end{equation*}
\end{theorem}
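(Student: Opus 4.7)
The plan is to mirror the sparse-domination proof of Theorem~\ref{thm1.1}, replacing the Banach triangle inequality by the $p$-triangle inequality afforded by the $p$-convexity of $\X$. First, I would invoke the pointwise sparse bound established in~\cite{cao} for multilinear pseudo-differential operators with symbol $\sigma\in S_{\rho,\delta}^r(n,m)$, $r<mn(\rho-1)$: there is a sparse family $\mathcal{S}$ such that
$$|T_{\sigma}(\vec{f})(x)|\lesssim \mathcal{A}_{\mathcal{S}}(\vec{f})(x):=\sum_{Q\in\mathcal{S}}\prod_{i=1}^m\langle|f_i|\rangle_Q\chi_Q(x).$$
This reduces the theorem to the corresponding weighted quasi-Banach estimate for $\mathcal{A}_{\mathcal{S}}$.

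Next, I would run the standard Calder\'on--Zygmund stopping time on $\mathcal{S}$ to extract principal cubes $\mathcal{F}\subset\mathcal{S}$. For each $Q\in\mathcal{S}$, let $\pi(Q)\in\mathcal{F}$ be the closest principal ancestor; the construction guarantees $\prod_i\langle|f_i|\rangle_Q\lesssim\prod_i\langle|f_i|\rangle_F$ whenever $\pi(Q)=F$, and the pairwise disjoint pieces $E_F:=F\setminus\bigcup\{F'\in\mathcal{F}:F'\subsetneq F\text{ maximal}\}$ satisfy $|E_F|\gtrsim|F|$, hence $w(E_F)\gtrsim w(F)/[w]_{A_\infty}$ since $w\in A_\infty$. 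Reorganizing the sparse sum by principal ancestor yields
$$\mathcal{A}_{\mathcal{S}}(\vec{f})\lesssim\sum_{F\in\mathcal{F}}\Big(\prod_{i=1}^m\langle|f_i|\rangle_F\Big)\sum_{Q:\pi(Q)=F}\chi_Q.$$

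At this point the quasi-Banach setting forces a real change: in place of the Banach triangle inequality, I would apply the $p$-triangle inequality from the $p$-convexity of $\X$, namely $\|\sum_F g_F\|_{\X(w)}^p\leq\sum_F\|g_F\|_{\X(w)}^p$. Careful bookkeeping --- using sparseness together with $w\in A_\infty$ (so that $w(F)\leq[w]_{A_\infty}w(E_F)$) to reduce the principal expression to disjointly supported pieces dominated pointwise by $\prod_i Mf_i$, then taking $p$-th roots --- converts the single factor $[w]_{A_\infty}$ appearing in the Banach proof of Theorem~\ref{thm1.1} into $[w]_{A_\infty}^{1/p}$, which is exactly the exponent in the claim. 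The disjointly supported pieces collapse back into the pointwise product $\prod_{i=1}^m Mf_i$ via $\langle|f_i|\rangle_F\chi_{E_F}\leq Mf_i\chi_{E_F}$. The mapping $P_m\colon\overline{\X}_1\times\cdots\times\overline{\X}_m\to\overline{\X}$ (which descends to the weighted r.i.\ quasi-Banach envelope setting) then factors the quasi-norm into $\prod_i\|Mf_i\|_{\X_i(w)}$, and the Boyd-index hypothesis $1<p_{\X_i}\leq q_{\X_i}<\infty$ delivers $\|M\|_{\X_i(w)\to\X_i(w)}\lesssim[w]_{A_{p_{\X_i}}}^{1/p_{\X_i}}$, assembling into the stated inequality.

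The main obstacle is precisely the bookkeeping in the last step: making sure that only a single power $1/p$ of $[w]_{A_\infty}$ (rather than a larger one) survives all the $p$-th root and $p$-triangle manipulations carried out in the abstract r.i.\ quasi-Banach setting, and transferring the boundedness of $P_m$ on the Banach envelopes $\overline{\X}_i$ to a usable estimate directly on the quasi-Banach spaces $\X_i(w)$. Both points are more delicate than in the Banach proof of Theorem~\ref{thm1.1}, where the standard triangle inequality and the well-developed theory of r.i.\ Banach function envelopes are immediately available.
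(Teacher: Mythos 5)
Your proposal diverges from the paper's argument in a way that introduces genuine gaps. Where you attempt to reach the quasi-Banach estimate through a Calder\'on--Zygmund stopping time on $\mathcal{S}$, principal cubes, and the $p$-triangle inequality, the paper simply re-runs the proof of Theorem~\ref{thm1.1} after the single observation that $\mathbb{Y}:=\mathbb{X}^{1/p}$ is a RIBFS (by $p$-convexity). Concretely, the paper writes
$$
\|T_\sigma(\vec{f})\|_{\X(w)}^{p}\simeq\sup_{\|g\|_{\Y'(w)}\leq 1}\int_{\R^n}|T_\sigma(\vec{f})(x)|^{p}\,g(x)\,w(x)\,dx,
$$
applies the sparse domination from Lemma~\ref{lem1.1}, then uses the elementary subadditivity $\bigl(\sum_Q a_Q\bigr)^p\leq\sum_Q a_Q^p$ for $0<p\le1$ to pass the power $p$ inside the sparse sum, and from there invokes the very same Carleson embedding $\sum_{Q\subseteq R}w(Q)\lesssim[w]_{A_\infty}w(R)$, generalized H\"older in the $\Y,\Y'$ pairing, and the boundedness of $M_w^{\mathcal{D}}$ on $\Y'(w)$ (valid because $p_{\Y'}=q_\X/(q_\X-p)>1$). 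A single factor $[w]_{A_\infty}$ appears inside the $p$-th power via Carleson embedding, and the $1/p$ power of both sides produces exactly $[w]_{A_\infty}^{1/p}$. The remaining factors come from Lemma~\ref{lem1.2} and the $P_m$ hypothesis, just as in Theorem~\ref{thm1.1}.

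Two specific defects in your route. First, the pieces $g_F=\bigl(\prod_i\langle|f_i|\rangle_F\bigr)\sum_{\pi(Q)=F}\chi_Q$ are \emph{not} disjointly supported: the union $\bigcup_{\pi(Q)=F}Q$ is all of $F$, so every $g_{F'}$ with $F'\subsetneq F$ lives inside the support of $g_F$. Consequently the claimed collapse to $\prod_i Mf_i$ via $\langle|f_i|\rangle_F\chi_{E_F}\leq Mf_i\chi_{E_F}$ does not apply to $g_F$ as written, and the $p$-triangle inequality $\|\sum_F g_F\|_{\X(w)}^p\leq C\sum_F\|g_F\|_{\X(w)}^p$ produces terms $\|g_F\|_{\X(w)}$ that you have no ready control over (they contain the overlapping inner counting function $\sum_{\pi(Q)=F}\chi_Q$). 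Second, and relatedly, the derivation of the exponent $1/p$ on $[w]_{A_\infty}$ is asserted rather than shown; you flag this as ``the main obstacle'' but leave it unresolved. The cleanest way to close this gap is precisely the paper's device: work in the RIBFS $\Y=\X^{1/p}$ and its Lorentz--Luxemburg dual, so that the quasi-Banach problem reduces to a Banach one and the Carleson embedding factor $[w]_{A_\infty}$ sits inside a $p$-th power from the start. The concern you raise about transferring $P_m$ from the r.i.\ envelopes to $\X_i(w)$ does not in fact arise once this reduction is made, because everything is handled at the level of $\|(\mathcal{M}(\vec{f}))^p\|_{\Y(w)}^{1/p}=\|\mathcal{M}(\vec{f})\|_{\X(w)}$ and then Lemma~\ref{lem1.2} applies directly.
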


\begin{theorem}\label{thm1.4}
Let $1<r<\infty,$ $\mathbb{X} $ be a rearrangement invariant quasi-Banach function space with $p$-convex property  for some $0<p\leq 1.$ For $i=1,2\cdots m$, let $\mathbb{X}_i$ be rearrangement invariant quasi-Banach function spaces with
$r<p_{\mathbb{X}},p_{\mathbb{X}_i}\leq q_{\mathbb{X}}, q_{\mathbb{X}_i}<\infty$ and each of them is $p_i$-convex for some $0<p_i \leq 1.$ Assume that $\sigma \in S_{\rho,\delta}^l(n,m)$ with $0 \leq \rho,\delta \leq 1, l < mn(\rho-1).$ Suppose that $P_m$ maps
$\overline{\mathbb{X}}_1\times \cdots \times\overline{\mathbb{X}}_m$ to $\overline{\mathbb{X}}$ and $w\in A_{p_0/r}, \vec{b} =\left(b_{1}, \ldots, b_{m}\right)\in \mathrm{BMO}^m,$ then there exist $q_i>1, i=1,2\cdots, m,$ such that

\begin{equation*}
\begin{aligned}
\left\| T_{\sigma,\Sigma \mathbf{b}}(\vec{f})\right\|_{\mathbb{X}(w)} \lesssim  \| \vec{b}\|_{\mathrm{BMO}}\cdot[w]_{A_\infty}^{\frac{1}{p}} \left([w]_{A_\infty}^p \prod\limits_{i=1}\limits^m [w]_{A_{p_{\mathbb{X}_i}}}^{\frac{p}{p_{\mathbb{X}_i}}} + \prod\limits_{i=1}\limits^m [w]_{A_{p_{{\mathbb{X}}_i}/r}}^{\frac{p}{q_ir}} \right)^{\frac{1}{p}} \prod\limits_{i=1}\limits^m \left\| f_i\right\|_{\mathbb{X}_i(w)},
\end{aligned}
\end{equation*}
where $p_0=\min\limits_{i}{\{p_{\X_i}\}}.$
\end{theorem}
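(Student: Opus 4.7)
The plan is to prove Theorem \ref{thm1.4} as the quasi-Banach counterpart of Theorem \ref{thm1.2}, by combining a commutator sparse domination with the $p$-convexity of $\mathbb X$ (and $p_i$-convexity of each $\mathbb X_i$). The starting point is the pointwise sparse bound of \cite{cao} for the multilinear commutator: for every $\vec f$ there is a finite family of sparse collections $\{\mathcal S_k\}$ and an exponent vector $\vec r=(r,\ldots,r)$ with $r$ slightly larger than $1$ so that
\[
|T_{\sigma,\Sigma\mathbf b}(\vec f)(x)|\lesssim \|\vec b\|_{\mathrm{BMO}}\sum_k\sum_{j=1}^m\Bigl(\mathcal C_{\mathcal S_k,j}(\vec f)(x)+\mathcal A^{\sharp}_{\mathcal S_k,j,\vec r}(\vec f)(x)\Bigr),
\]
where $\mathcal C_{\mathcal S,j}(\vec f)(x):=\sum_{Q\in\mathcal S}|b_j(x)-\langle b_j\rangle_Q|\prod_i\langle|f_i|\rangle_Q\chi_Q(x)$ is the BMO--sparse commutator and $\mathcal A^{\sharp}_{\mathcal S,j,\vec r}$ is the $L^r$-averaged sparse surrogate. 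Both are exactly the two sparse objects already analysed in the proof of Theorem \ref{thm1.2} in the Banach setting.

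Next I will exploit $p$-convexity. Since $\mathbb X$ is $p$-convex with $0<p\le 1$, its quasi-norm satisfies $\|\sum_l g_l\|_{\mathbb X(w)}^{p}\lesssim \sum_l\|g_l\|_{\mathbb X(w)}^{p}$, so it suffices to estimate each $\|\mathcal C_{\mathcal S_k,j}(\vec f)\|_{\mathbb X(w)}^p$ and $\|\mathcal A^{\sharp}_{\mathcal S_k,j,\vec r}(\vec f)\|_{\mathbb X(w)}^p$ separately and add them up. To bound a single sparse piece in the quasi-Banach space $\mathbb X(w)$ I will follow the template of Theorem \ref{thm1.3}: the $p$-concavification $\mathbb X^{[p]}$, defined by $\|g\|_{\mathbb X^{[p]}}:=\||g|^{1/p}\|_{\mathbb X}^p$, is a rearrangement invariant Banach function space whose Boyd indices are $p^{-1}$ times those of $\mathbb X$ (analogously for each $\mathbb X_i^{[p_i]}$); since $p\le 1$ turns pointwise sparse sums into $\ell^p$-sums, the $p$-th power of a sparse quasi-Banach norm is controlled by a sparse operator of the same structure acting in these Banach concavifications. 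Applying in these Banach spaces the sparse estimates that underlie Theorem \ref{thm1.2} then yields, for $\mathcal A^{\sharp}$, a factor of the form $\prod_i[w]_{A_{p_{\mathbb X_i}/r}}^{p/(q_i r)}$ with $q_i>1$ coming from a Hölder/Rubio de Francia calibration against the rescaled Boyd indices, and, for $\mathcal C$, the BMO oscillation produces one additional $A_\infty$ factor via John--Nirenberg, giving $[w]_{A_\infty}^p\prod_i[w]_{A_{p_{\mathbb X_i}}}^{p/p_{\mathbb X_i}}$. Adding the two sparse contributions, tracking the outer $[w]_{A_\infty}^{1/p}$ produced by the dyadic sparse sum (exactly as in Theorem \ref{thm1.3}), and finally taking the $1/p$-th root produces the displayed estimate.

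The principal obstacle I anticipate is the interaction between the $p$-concavification and the BMO oscillation inside $\mathcal C_{\mathcal S,j}$: the contribution of $|b_j-\langle b_j\rangle_Q|$ must remain linear in $\|\vec b\|_{\mathrm{BMO}}$, which, after the reduction to $\mathbb X^{[p]}$, forces an $L^\tau$-style John--Nirenberg control with $\tau>0$ small and dictated by the rescaled Boyd indices. The auxiliary exponents $q_i>1$ in the statement are determined by this calibration, and keeping them uniformly compatible with both the $L^r$-averaged sparse piece $\mathcal A^{\sharp}$ and the BMO--sparse piece $\mathcal C$ throughout the concavification is the delicate bookkeeping I expect to occupy most of the proof.
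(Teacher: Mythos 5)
Your overall route is the one the paper takes: pass to the $p$-concavification $\mathbb Y=\mathbb X^{1/p}$ (your $\mathbb X^{[p]}$), dualize against $\mathbb Y'(w)$, split the commutator sparse domination into the two pieces $\mathcal A_{\mathcal S,b_j}$ and $\mathcal A^*_{\mathcal S,b_j}$, and close the estimate with Carleson embedding, $M_r$-boundedness on $\mathbb X_i(w)$, and Orlicz--H\"older against the BMO oscillation. Two things go wrong in the details, though. First, your stated sparse domination is internally inconsistent: you pull $\|\vec b\|_{\mathrm{BMO}}$ out in front while simultaneously keeping $|b_j-\langle b_j\rangle_Q|$ inside $\mathcal C_{\mathcal S,j}$. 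Lemma~\ref{lem1.1} (Proposition~4.1 of \cite{cao}) gives only the pointwise bound by $\mathcal A_{\mathcal S_i,b_j}+\mathcal A^*_{\mathcal S_i,b_j}$ with no BMO factor extracted; the BMO norm emerges term-by-term later via John--Nirenberg / exponential-Orlicz duality, once for each of the two sparse pieces, not in the pointwise domination itself.

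Second, and more substantively, you omit the mechanism that actually makes the $\mathcal C$-piece (the paper's $\mathcal H_{i,j}^1$) work. After taking $p$-th powers and testing against $h\in\mathbb Y'(w)$, the generalized H\"older inequality in Orlicz pairing $\bigl(\exp L^{1/p}(w),\, L(\log L)^p(w)\bigr)$ leaves a factor $\|h\|_{L(\log L)^p(w),Q}$ inside the sparse sum. That cannot be summed by Carleson embedding directly: the paper introduces a principal-cubes family $\mathcal B$ with stopping time $\tau(R)=\|h\|_{L(\log L)^p(w),R}\prod_i\langle|f_i|\rangle_R^p$, uses $A_\infty$-Carleson on each principal-cube packing to pay $[w]_{A_\infty}$, then dominates $M^{\mathcal D}_{L(\log L)^p(w)}h$ pointwise by $M^{\mathcal D}_{\alpha(w)}h$ with $1<\alpha<(q_{\mathbb X}/p)'$ chosen so that $p_{(\mathbb Y')^{1/\alpha}}>1$, and only then invokes the weighted-dyadic-maximal boundedness on $\mathbb Y'(w)$. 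Your phrase ``John--Nirenberg produces one additional $A_\infty$ factor'' names the output but not the argument, and your ``delicate bookkeeping'' anticipates the wrong obstruction: the $q_i>1$ do not arise from a H\"older/Rubio-de-Francia calibration in the concavified space, they are the constants in the quantitative bound $\|M_r f\|_{\mathbb X_i(w)}\lesssim[w]_{A_{p_{\mathbb X_i}/r}}^{1/(q_i r)}\|f\|_{\mathbb X_i(w)}$ from \cite[Lemma 3.1]{tan}, used on the $\mathcal A^*$-piece. Without the principal-cubes step, the $\mathcal C$-piece estimate does not close.
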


\begin{remark}
Clearly, when $\sigma \in S_{\rho,\delta}^l(n,m)$ with $0 \leq \rho,\delta \leq 1, l < mn(\rho-1),$ Theorem 4.5 in \cite{mic} is just a special case of Theorem \ref{thm1.2} and \ref{thm1.4}.
\end{remark}

Note that, to deal with the boundedness of operators in some Banach spaces, or even quasi-Banach spaces, one often uses dual space method as one of the basic tools. However, in Harmonic analysis, some estimates are not completely related to Banach space or dual spaces. For example, P\'{e}rez \cite{per0} proved that
$$
\sup _{\lambda>0} \varphi(\lambda) w\left(\{y \in \mathbb{R}^n:|[b, T] f(y)|>\lambda\}\right) \leqslant C \sup _{\lambda>0} \varphi(\lambda) w\left(\{y \in \mathbb{R}^n: M^2 f(y)>\lambda\}\right),
$$
where $T$ is Calder\'{o}n-Zygmund operator, $\varphi(\lambda)=\frac{\lambda}{1+\log ^{+} \lambda^{-1}}$ and $w \in A_{\infty}.$  This inequality is very important in illustrating the endpoint estimates of the commutators. Notice that there is a function $\varphi$ on both sides of the above inequality which is not homogeneous and hence each side in above inequality is not a norm or quasi-norm. Therefore, this type of inequality not only reflects the properties of the operator itself, but also reduces the effect of the dual space method.\par

Based on the above analysis, we present the following weighted modular inequalities for multilinear pseudo-differential operators and their commutators, which are completely new even in the unweighted case.

\begin{theorem}\label{thm1.5}
Assume that $\sigma \in S_{\rho,\delta}^r(n,m)$ with $0 \leq \rho,\delta \leq 1, r < mn(\rho-1).$  Let~$\phi$ be a~$N$-function with sub-multiplicative property. If $1<i_{\phi}<\infty$ then there exist two constants~$\alpha, \beta,$ such that for
every~$1<q<{i_{\phi}}$ and ~$w\in A_q ,$
\begin{equation*}
\int_{\mathbb{R}^n}\phi(|T_{\sigma}(\vec{f})(x)|)w(x)dx\leq C(\phi,w)
 \left(\prod\limits_{i=1}^{m}\int_{\mathbb{R}^n}\phi^m(|f_i(x)|)w(x)dx \right)^{\frac{1}{m}},
\end{equation*}
and if $i_\phi =1, w\in A_1,$
$$\sup\limits_\lambda \phi(\lambda)w\left(\{x\in \mathbb{R}^n: |T_{\sigma}(\vec{f})(x)|>\lambda\}\right)^m\leq C\prod\limits_{i=1}^{m}\int_{\mathbb{R}^n}\phi(|f_i(x)|)w(x)dx,$$
where the definitions of $N$-function, $i_{\phi}$ and $C_1$ are listed in Section 2, and
\begin{equation*}
C(\phi, w)= \begin{cases}[w]_{A_{\infty}}^{1+\alpha C_{1}}, &  \beta[w]_{A_{q}}^{\frac{1}{q}}\leq 2, \\ [w]_{A_{\infty}}^{1+\alpha C_{1}}[w]_{A_{q}}^{\frac{mC_1}{q }}, & \beta[w]_{A_{q}}^{\frac{1}{q}}> 2.\end{cases}
\end{equation*}

\end{theorem}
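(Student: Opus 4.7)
The plan is to route both inequalities through the multilinear sparse domination available for $T_\sigma$ under the present symbol class, and then to convert the sparse bound into a weighted modular estimate by exploiting the sub-multiplicative structure of $\phi$ together with the Boyd-index hypothesis $1<i_\phi<\infty$ (and its endpoint analogue at $i_\phi=1$).

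First I would invoke the pointwise sparse domination for $T_\sigma$: since $\sigma\in S_{\rho,\delta}^{r}(n,m)$ with $r<mn(\rho-1)$, the sparse bound established by Cao--Xue--Yabuta (the same one underlying Theorem~A) provides, for each $\vec f$, a dyadic sparse family $\mathcal{S}$ such that
\begin{equation*}
|T_\sigma(\vec f)(x)|\ \lesssim\ \mathcal{A}_{\mathcal{S}}(\vec f)(x):=\sum_{Q\in\mathcal{S}}\prod_{i=1}^{m}\langle |f_i|\rangle_{Q}\,\chi_Q(x).
\end{equation*}
By monotonicity of the $N$-function $\phi$, the strong modular estimate reduces to controlling $\int_{\mathbb{R}^n}\phi(\mathcal{A}_{\mathcal{S}}(\vec f))\,w\,dx$. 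The second step is an $A_\infty$ sparse-to-maximal comparison at the modular level: adapting the Hyt\"{o}nen--P\'{e}rez argument to the multilinear setting (stopping cubes tracked simultaneously across all $m$ inputs), one replaces the sparse operator by $\prod_iMf_i$ and obtains
\begin{equation*}
\int_{\mathbb{R}^n}\phi(\mathcal{A}_{\mathcal{S}}(\vec f))\,w\,dx\ \lesssim\ [w]_{A_\infty}^{1+\alpha C_1}\int_{\mathbb{R}^n}\phi\Bigl(\prod_{i=1}^{m}Mf_i\Bigr)\,w\,dx,
\end{equation*}
where the exponent $1+\alpha C_1$ comes from a John--Nirenberg-type summation across the sparse family and $C_1$ is the upper Boyd constant of $\phi$ from Section~2.

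The third step uses the sub-multiplicativity $\phi(t_1\cdots t_m)\leq\prod_i\phi(t_i)$ followed by H\"{o}lder's inequality to split the integral:
\begin{equation*}
\int_{\mathbb{R}^n}\phi\Bigl(\prod_{i=1}^{m}Mf_i\Bigr)w\,dx\ \leq\ \prod_{i=1}^{m}\Bigl(\int_{\mathbb{R}^n}\phi^{m}(Mf_i)\,w\,dx\Bigr)^{1/m}.
\end{equation*}
Since $1<q<i_\phi$, the function $\phi^m$ has lower Boyd index at least $mi_\phi>1$, so the weighted modular maximal inequality gives $\int\phi^m(Mf_i)\,w\,dx\lesssim C_{\phi,w}\int\phi^m(|f_i|)\,w\,dx$, and the product structure reproduces the right-hand side of the theorem. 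The dichotomy in $C(\phi,w)$ reflects the standard splitting of the modular maximal constant: for $\beta[w]_{A_q}^{1/q}\leq 2$ the maximal bound absorbs into a universal constant, leaving only the $[w]_{A_\infty}^{1+\alpha C_1}$ coming from the sparse-to-maximal step; for $\beta[w]_{A_q}^{1/q}>2$ one pays $[w]_{A_q}^{C_1/q}$ per factor, producing the extra $[w]_{A_q}^{mC_1/q}$ after the $m$-fold H\"{o}lder application. For the weak endpoint $i_\phi=1$ with $w\in A_1$, the strong modular maximal bound is unavailable, so I would replace it by the weighted weak $(1,1)$ estimate for $M$ together with a P\'{e}rez-style level-set argument applied to $\phi(\lambda)\,w(\{Mf_i>\lambda\})$; after factorising through sub-multiplicativity of $\phi$, one gets the required weak modular bound with the $m$-th power on the measure.

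The main obstacle I anticipate is the multilinear modular sparse-to-maximal step: the linear version $\int\phi(\mathcal{A}_{\mathcal{S}}f)\,w\,dx\lesssim[w]_{A_\infty}\int\phi(Mf)\,w\,dx$ is by now standard, but in the $m$-linear setting the sub-multiplicativity of $\phi$ must be interleaved with the Carleson summation, and the sparse stopping times are shared across all $m$ inputs, so one cannot simply iterate the linear lemma. Carrying this out while respecting the $N$-function structure and producing the precise exponent $1+\alpha C_1$ on $[w]_{A_\infty}$ is where the bulk of the technical work lies; once that lemma is in hand, the remainder of the argument reduces to sub-multiplicativity, H\"{o}lder, and the classical weighted modular estimates for $M$.
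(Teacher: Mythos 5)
Your overall route matches the paper's: pointwise sparse domination of $T_\sigma$ (Lemma \ref{lem1.1}), then a modular comparison between the multilinear sparse operator and the multilinear maximal function with an $[w]_{A_\infty}^{1+\alpha C_1}$ factor, then a weighted modular bound for the maximal function that produces the $[w]_{A_q}$ dichotomy, and an extrapolation/level-set argument at the endpoint $i_\phi=1$. So the architecture is correct and coincides with the paper's proof (Lemmas \ref{lem4.1}--\ref{lem4.3} feeding into the two cases).

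Where the proposal is genuinely incomplete is precisely the step you flag as ``the main obstacle'': the multilinear modular sparse-to-maximal estimate. The mechanism you sketch --- a ``John--Nirenberg-type summation across the sparse family'' interleaved with sub-multiplicativity --- is not what actually makes the exponent $1+\alpha C_1$ come out, and it is not clear it would close. The paper's Lemma \ref{lem4.3} works differently: after the standard Carleson-embedding step reduces $\int\phi(\mathcal{A}_{\mathcal{S}}\vec f)\,w$ to $[w]_{A_\infty}\int\mathcal{M}(\vec f)\,M_w^{\mathcal{D}}h\,w$ with the auxiliary function $h=\phi(\mathcal{A}_{\mathcal{S}}\vec f)/\mathcal{A}_{\mathcal{S}}\vec f$, one applies Young's inequality $st\le\phi(s)+\bar\phi(t)$, uses that $\phi\in\Delta_2$ (which here follows from sub-multiplicativity, not from a separate Boyd hypothesis) to pull out $\varepsilon^{-C_1}$, and then uses the $N$-function identity $\bar\phi(\phi(t)/t)\le\phi(t)$ to turn $\bar\phi(h)$ back into $\phi(\mathcal{A}_{\mathcal{S}}\vec f)$. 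The key point is a self-absorption: $\varepsilon$ is chosen small (depending on $[w]_{A_\infty}$, and on the quasi-convexity constant $a_1$ of $\bar\phi^{\alpha}$, which exists because $i_\phi>1\Leftrightarrow\bar\phi\in\Delta_2$) so that the $\bar\phi$-term is $\tfrac12$ of the left-hand side and can be absorbed; the final weight exponent $1+\alpha C_1$ is the price of that choice of $\varepsilon$ via the $\Delta_2$ inequality $\phi(\lambda t)\le 2^{C_1}\lambda^{C_1}\phi(t)$. Without this absorption device your argument has no way to produce and control the $[w]_{A_\infty}$-power. A smaller correction: $C_1$ is the $\Delta_2$ doubling exponent of $\phi$, related to the upper dilation index $I_\phi$, not a ``Boyd constant''; and $\alpha$ is chosen so that $\bar\phi^{\alpha}$ is quasi-convex. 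Similarly, your endpoint sketch is plausible but the paper's route is to extrapolate the strong modular inequality $\rho_w^\phi(T_\sigma\vec f)\lesssim\rho_w^\phi(\mathcal{M}\vec f)$ to the RIQBFS $L^{1/m,\infty}(w)$ and then dominate $\phi(\mathcal{M}\vec f)$ by $\mathcal{M}_w^{\mathcal{D}}(\vec f_\phi)$ using $w\in A_1$ and the weak $L^1\times\cdots\times L^1\to L^{1/m,\infty}(w)$ bound for $\mathcal{M}_w^{\mathcal{D}}$; a direct weighted weak $(1,1)$ bound for the one-variable $M$ would not immediately give the $m$-th power on the measure on the left.
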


\begin{theorem}\label{thm1.6}
Let $\sigma \in S_{\rho,\delta}^l(n,m)$ with $0 \leq \rho,\delta \leq 1, l < mn(\rho-1).$  $\vec{b} =\left(b_{1}, \ldots, b_{m}\right)\in \mathrm{BMO}^m.$ Assume that~$\phi$ is a~$N$-function with sub-multiplicative property. For each~$1\leq r < \infty,$ if $r<i_{\phi}<\infty,$ then there exist two constants~$\alpha, C(\phi,w,r)$ such that for
every~$1<q<\frac{i_{\phi}}{r}$ and ~$w\in A_q ,$
\begin{equation*}
\begin{aligned}
\int_{\mathbb{R}^n}\phi(|T_{\sigma,\Sigma \mathbf{b}}(\vec{f})(x)|)w(x)dx\leq &C(\phi,w,r)\| \vec{b}\|_{\mathrm{BMO}}^{1+\alpha C_1}\\
&\times \left(\prod\limits_{i=1}^{m}\int_{\mathbb{R}^n}\phi^m(|f_i(x)|)w(x)dx \right)^{\frac{1}{m}},
\end{aligned}
\end{equation*}
where $C_1$ is the constant in (\ref{ie5.3}).
\end{theorem}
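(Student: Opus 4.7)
The plan is to follow the layer-cake blueprint of Theorem \ref{thm1.5} and graft onto it a commutator sparse domination together with a John--Nirenberg correction. First I would invoke the sparse bound from \cite{cao}: there exist finitely many sparse families $\mathcal S$ such that
$$|T_{\sigma,\Sigma\mathbf b}(\vec f)(x)|\lesssim \sum_{j=1}^m\Bigl(\|\vec b\|_{\mathrm{BMO}}\,\mathcal A_{\mathcal S,r}(\vec f)(x)+\mathcal A_{\mathcal S,b_j,r}(\vec f)(x)\Bigr),$$
where $\mathcal A_{\mathcal S,r}(\vec f)(x)=\sum_{Q\in\mathcal S}\prod_{i=1}^m\langle|f_i|^r\rangle_Q^{1/r}\chi_Q(x)$ and $\mathcal A_{\mathcal S,b_j,r}$ replaces the prefactor by $|b_j(x)-b_{j,Q}|$. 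The first term is handled by the same argument that yields Theorem \ref{thm1.5}, since $\mathcal A_{\mathcal S,r}$ is the sparse operator that controls $T_\sigma$. Thus the new content is a modular estimate for the commutator-type sparse operator $\mathcal A_{\mathcal S,b_j,r}$.

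For this piece I would perform a layer-cake decomposition exactly as in Theorem \ref{thm1.5}. Since $r<i_\phi$ and $1<q<i_\phi/r$, one may pick $s$ with $qr<s<i_\phi$; then $\phi(t)/t^s$ is essentially decreasing and $w\in A_q\subset A_{s/r}$. Writing
$$\int_{\mathbb R^n}\phi(\mathcal A_{\mathcal S,b_j,r}(\vec f))\,w\,dx=\int_0^\infty \phi'(\lambda)\,w\bigl(\{\mathcal A_{\mathcal S,b_j,r}(\vec f)>\lambda\}\bigr)\,d\lambda,$$
one applies the sharp weighted estimate for the BMO-sparse operator that underlies Theorem \ref{thm1.2}: the $L^{s/r}(w)$-type bound for $\mathcal A_{\mathcal S,b_j,r}(\vec f)$ carries a constant of the shape $[w]_{A_\infty}^{1+\alpha C_1}[w]_{A_{s/r}}^{\star}\,\|\vec b\|_{\mathrm{BMO}}^{1+\alpha C_1}$, where the common exponent $1+\alpha C_1$ is produced by the John--Nirenberg iteration combined with the defining inequality (\ref{ie5.3}). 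Inserting this into the layer-cake integral and closing the $\lambda$-integration via the quasi-monotonicity of $\phi(t)/t^s$ converts the estimate into the desired modular inequality, with the constant organized into $C(\phi,w,r)\|\vec b\|_{\mathrm{BMO}}^{1+\alpha C_1}$.

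The right-hand side is reassembled using the sub-multiplicative property of $\phi$. After a generalized H\"older inequality applied to $\prod_{i=1}^m\langle|f_i|^r\rangle_Q^{1/r}$ inside the $L^{s/r}(w)$-norm, one obtains a product of $L^s(w)$-type quantities in the $f_i$; the inequality $\phi(t_1\cdots t_m)\le\prod_i\phi(t_i)$ then lets us push the modular across the product and bound each factor by $(\int\phi^m(|f_i|)w)^{1/m}$, yielding the claimed product structure. The principal obstacle will be bookkeeping the exponent $1+\alpha C_1$ on $\|\vec b\|_{\mathrm{BMO}}$ through the layer-cake step: this requires matching the John--Nirenberg estimate applied inside the sparse family against the $A_\infty$-dependence extracted from (\ref{ie5.3}), all while keeping $s$ strictly below $i_\phi$ so that the modular-to-strong-type reduction remains valid. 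Once this constant is tracked correctly, the remaining ingredients (H\"older, sub-multiplicativity, and layer-cake) run verbatim as in Theorem \ref{thm1.5}.
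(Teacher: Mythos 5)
The central step in your plan does not work, and it stems from a misreading of how Theorem \ref{thm1.5} is proved. Theorem \ref{thm1.5}'s proof is \emph{not} a layer-cake argument: the paper establishes the modular inequality for the sparse operator directly (Lemma \ref{lem4.3}), via the auxiliary function $h(x)=\phi(\mathcal A_{\mathcal S}(\vec f)(x))/\mathcal A_{\mathcal S}(\vec f)(x)$, the convex-duality (Young) inequality $st\le\phi(s)+\bar\phi(t)$, the identity $\bar\phi(\phi(t)/t)\le\phi(t)$, and a self-absorption step driven by a carefully chosen small $\varepsilon$. The same pattern is then repeated in the proof of Theorem \ref{thm1.6} for each of $\mathcal A_{\mathcal S_i,b_j}$ and $\mathcal A_{\mathcal S_i,b_j}^*$ separately, now supplemented by the $\exp L(w)$--$L(\log L)(w)$ generalized H\"older inequality for the BMO factor and a second application of Lemma \ref{lem4.1} to handle $(M_w^{\mathcal D})^2$. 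No decomposition in $\lambda$ occurs anywhere.

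Your proposal instead tries to pass from the strong $L^{s/r}(w)$ bound of Theorem \ref{thm1.2} to the modular inequality by writing $\int\phi(\mathcal A_{\mathcal S,b_j,r}(\vec f))w\,dx=\int_0^\infty\phi'(\lambda)\,w(\{\mathcal A_{\mathcal S,b_j,r}(\vec f)>\lambda\})\,d\lambda$ and then estimating the distribution function by Chebyshev. This step fails: Chebyshev gives $w(\{\cdot>\lambda\})\le\lambda^{-s/r}\|\mathcal A\|_{L^{s/r}(w)}^{s/r}$, and the resulting $\lambda$-integral $\int_0^\infty\phi'(\lambda)\lambda^{-s/r}\,d\lambda$ is divergent for any genuine $N$-function that is not a pure power, so the ``layer-cake closure'' you describe cannot be performed. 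More fundamentally, a single strong-type norm bound does not yield a modular inequality with a geometric-mean product $\bigl(\prod_i\int\phi^m(|f_i|)w\bigr)^{1/m}$ on the right: the modular inequality genuinely encodes more than a norm inequality, as the paper itself emphasizes before stating Theorems \ref{thm1.5}--\ref{thm1.6}. The missing idea is precisely the pointwise $\phi$-$\bar\phi$ duality trick applied inside the sparse sum, combined with the $\varepsilon$-absorption; once that is in place the remaining ingredients (John--Nirenberg via $\exp L$, Carleson embedding, Lemma \ref{lem4.2} to go from $\mathcal M$ and $\mathcal M_r$ to the product of modulars) do follow, but the layer-cake route cannot replace it. I would also note that your initial sparse bound is not the one from \cite{cao} as stated in Lemma \ref{lem1.1} (that one is $\mathcal A_{\mathcal S_i,b_j}+\mathcal A_{\mathcal S_i,b_j}^*$ with $r=1$ averages); your version is a derived consequence, which is fine, but it should be presented as such.
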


\begin{remark}
We would like to give two comments. First of all, using the fact that $A_p=\bigcup_{1<q<p}A_q$ for all $1<p<\infty, $ we know that the results of Theorem \ref{thm1.5} and \ref{thm1.6} also hold for the endpoint case $w\in A_{i_\phi/r}.$
Secondly, for the special case of $m=1$ and $\phi(t)=t^p$ with $1<p<\infty,$ the result in Theorem \ref{thm1.5} covers the conclusion in \cite[Theorem 1.3 and Corollary 4.2]{bel}.
\end{remark}

The organization of this article is as follows:  In Section 2, we will present some notations and definitions, as well as introduce some important properties which will be used later. The proofs of Theorems \ref{thm1.1} and \ref{thm1.2} will be given in Section 3, which will play a important role in the modular inequalities. The purpose of Section 4 is to prove Theorem \ref{thm1.3} and Theorem \ref{thm1.4}. Section 5 will be devoted to give the proofs of Theorems \ref{thm1.5}-\ref{thm1.6}. Finally, some applications will be given in Section 6.\par
Throughout this paper, we always use $C$ to denote a positive constant, which is independent of the main parameters, but it may vary from line to line. If, for any $a, b \in \mathbb{R}, a \leq C b $ ($a \geq C b,$ respectively), we then write $a \lesssim b $ where $C$ is independent of $a$ and $b$, $(a \gtrsim b,$ respectively).

\section{Preliminary}
First, we present some basic facts for sparse family, Orlicz maximal operators, RIBFS, RIQBFS and  modular inequalities.
\subsection{ Sparse family} Some basic facts on dyadic calculus will be taken from \cite{ler2 , ler3}. We begin with the definition of dyadic lattice.
\begin{definition}\label{def2.1}
	A collection, $\mathcal{D}$ of cubes is said to be a dyadic lattice if it satisfies:
	\begin{enumerate}[(i).]
		\item For any $Q \in \mathcal{D}$ its sidelength $\ell_{Q}$ is of the form $2^{k}, k \in \mathbb{Z}$;
		\item $Q \cap R \in\{Q, R, \emptyset\}$ for any $Q, R \in \mathcal{D}$;
		\item The cubes of a fixed sidelength $2^{k}$ form a partition of $\mathbb{R}^{n}$.
	\end{enumerate}

\end{definition}
In dyadic calculus, Three Lattice Theorem (see \cite[Proposition 5.1]{ler2}) plays an important role. It asserts that there are $3^n$ dyadic lattices $\mathcal{D}^{(j)}$ such that for every cube $Q \subset \mathbb{R}^n$, there is a cube $R \in \mathcal{D}^{(j)}$ for some $j$, for which $3 Q \subset R$ and $|R| \leq 9^n|Q|$.\par
	\vspace{0.1cm}
Based on the Definition \ref{def2.1}, we have the following definition of the sparse family.
\begin{definition}
	Let $\mathcal{D}$ be a dyadic lattice. $\mathcal{S} \subset \mathcal{D}$ is said to be a $\eta$-sparse family with $\eta \in(0,1)$ if for every cube $Q \in \mathcal{S},$
	$$|\bigcup_{P \in \mathcal{S}, P \subsetneq Q} P|\leq (1-\eta) \left| Q \right|.$$
\end{definition}
Equivalently, if we define
$$E(Q)=Q \backslash \bigcup_{P \in \mathcal{S}, P \subsetneq Q} P,$$
then a trivial calculation shows that the sets $E(Q)$ are pairwisely disjoint and $|E(Q)| \geq \eta |Q|$.\par
Given a dyadic lattice $\mathcal{D}$ and a $\eta$-sparse family $\mathcal{S}\subseteq \mathcal{D}$, we define the sparse operator as
$$\mathcal{A}_{r, \mathcal{S}} f(x)= \sum_{Q \in \mathcal{S}}\langle|f|^{r}\rangle_{Q}^{1 / r}\chi_{Q}(x)=\sum_{Q \in \mathcal{S}}\left(\frac{1}{|Q|} \int_{Q}|f(y)|^{r} d y\right)^{\frac{1}{r}} \chi_{Q}(x),$$
where $r>0$ and $\langle|f|^{r}\rangle_{Q}=\frac{1}{|Q|} \int_{Q}|f(y)|^{r} d y$.\par
\par

\subsection{  Young function and Orlicz maximal operators}
We need to recall some basic facts from the theory of Orlicz spaces. For more information and a lively exposition about these spaces, we refer the readers to \cite{rao}.\par
Let $\Phi$ be the set of functions $\phi:[0, \infty) \longrightarrow[0, \infty)$ which satisfies non-negative, increasing and such that $\lim _{t \rightarrow \infty} \phi(t)=\infty $  and $\lim _{t \rightarrow 0} \phi(t)=0 .$ If $\phi \in \Phi$ is convex, we say that $\phi$ is a Young function.
The Orlicz space with respect to the measure $\mu, L_{\phi}(\mu)$, is defined to be the set of measurable functions $f$ such that for some $\lambda>0$,
$$
\int_{\mathbb{R}^n} \phi\left(\frac{|f(x)|}{\lambda}\right) d \mu<\infty.
$$
We can also define the $\phi$-norm of $f$ over a cube $Q$ as
$$
\|f\|_{\phi(\mu), Q}:=\inf \left\{\lambda>0: \frac{1}{\mu(Q)} \int_{Q} \phi\left(\frac{|f(x)|}{\lambda}\right) d \mu \leq 1\right\}.
$$
For simplify, we denote $\|f\|_{\phi, Q}$, if $\mu$ is the Lebesgue measure. And if $\mu=w d x$ is an absolutely continuous measure with respect to the Lebesgue measure we write $\|f\|_{\phi(w), Q}$.\par
It is a simple but important observation that each Young function $\phi$ satisfies the following generalized H\"{o}lder inequality:
$$\frac{1}{\mu(Q)} \int_{Q}|f g| d \mu \leq 2\|f\|_{\phi(\mu), Q}\|g\|_{\bar{\phi}(\mu), Q},$$
where $\bar{\phi}(t)=\sup _{s>0}\{s t-\phi(s)\}$ is the complementary function of $\phi$.

Then we use $M_{\phi}$ to represent the dyadic Orlicz maximal operator defined by:
$$
M_{\phi} f(x):=\sup _{x \in Q, Q \in \mathcal{D}}\|f\|_{\phi, Q}.
$$
	where the supremum is taken over all the dyadic cubes containing $x$ and $\mathcal{D}$ is the given dyadic grid. \par
We will employ several times the following particular examples of maximal operators.

\begin{itemize}
	\item If $\phi(t)=t^{r}$ with $r>1,$ then $M_{\phi}=M_{r}$.
	\item If $\phi(t)=t \log (e+t)^{\alpha}$ with $\alpha>0 ,$ then $\bar{\phi}(t) \simeq e^{t^{1 / \alpha}}-1$ and we denote
	$M_{\phi}=M_{L \log L^{\alpha}}$. We have that $M \leq M_{\phi} \lesssim M_{r}$ for all $1<r<\infty.$ Moreover, it can
	be showed that $M_{\phi} \approx M^{l+1},$ where $\alpha=l \in \mathbb{N}$ and $M^{l+1}$ is $M$ iterated $l+1$ times.
\end{itemize}
\subsection{ RIBFS and RIQBFS}
This subsection is devoted to giving the definitions and properties of rearrangement invariant Banach function spaces, denoted by RIBFS, and rearrangement invariant quasi-Banach function spaces (RIQBFS).  \par
$\circ$ \textbf{Basic definitions of RIBFS.} We write $\mathcal{M}$ for the set of measurable functions on $\left(\mathbb{R}^{n}, d x\right)$ and $\mathcal{M}^{+}$ for the nonnegative ones. A rearrangement invariant Banach norm is a mapping $\rho: \mathcal{M}^{+} \mapsto[0, \infty]$ satisfying that
\medskip
\begin{enumerate}[i).]
	\item $\rho(f)=0 \Leftrightarrow f=0,$ a.e.; $\rho(f+g) \leq \rho(f)+\rho(g) ; \rho(a f)=a \rho(f)$ for $a \geq 0$;
	\item If $0 \leq f \leq g,$ a.e., then $\rho(f) \leq \rho(g)$;
	\item If $f_{n} \uparrow f,$ a.e., then $\rho\left(f_{n}\right) \uparrow \rho(f)$;
	\item If $E$ is a measurable set with $|E|<\infty,$ then $\rho\left(\chi_{E}\right)<\infty,$ and $\int_{E} f d x \leq$ $C_{E} \rho(f),$ for some constant $0<C_{E}<\infty,$ depending on $E$ and $\rho,$ but independent of $f$;
	\item $\rho(f)=\rho(g),$ if $f$ and $g$ are equimeasurable, that is, $d_{f}(\lambda)=d_{g}(\lambda), \lambda \geq 0$ where $d_{f}\left(d_{g}\right.$ respectively) denotes the distribution function of $f$ ($g$ respectively).
\end{enumerate}
\medskip
A rearrangement invariant Banach function space $\X$ is defined by:
$$\mathbb{X}=\left\{f \in \mathcal{M}:\|f\|_{\mathbb{X}}:=\rho(|f|)<\infty\right\}.$$
Let $\mathbb{X}^{\prime}$ be the associated Banach function space of $\mathbb{X}$ in the form that
$$
\mathbb{X}^{\prime}=\left\{f \in \mathcal{M}:\|f\|_{\mathbb{X}^{\prime}}:=\sup \left\{\int_{\mathbb{R}^{n}} f g d x: g \in \mathcal{M}^{+}, \rho(g) \leq 1\right\}<\infty\right\}.
$$
An important fact is that $\mathbb{X}$ is a RIBFS if and only if $\mathbb{X}^{\prime}$ is a $\mathrm{RIBFS}.$
 For any $f \in \mathbb{X}$ and $g \in \mathbb{X}^{\prime},$  it also follows that:
$$
\int_{\mathbb{R}^{n}}|f g| dx \leqslant\|f\|_{\mathbb{X}}\|g\|_{\mathbb{X}^{\prime}},
$$
which can be regarded as the generalized H\"{o}lder's inequality.\par

Another key observation in a RIBFS $\X$ is that the Lorentz-Luxemburg theorem is still true, which states that
$$\|f\|_{\mathbb{X}}=\sup \left\{\left|\int_{\mathbb{R}^{n}} f g d x\right|: g \in \mathbb{X}^{\prime},\|g\|_{\mathbb{X}^{\prime}} \leq 1\right\}.$$\par
Recall the decreasing rearrangement of $f$ is the function $f^{*}$  defined by
$$
f^{*}(t)=\inf \left\{\lambda \geq 0: d_{f}(\lambda) \leq t\right\}, \text{for} ~ t \geq 0.
$$
The main property of $f^{*}$ is that it is equimeasurable with $f$. This allows one to use Luxemburg’s representation theorem to obtain a representation of $\X$. In other words, there exists a RIBFS $\overline{\mathbb{X}}$ over $\left(\mathbb{R}^{+}, d t\right)$ such that $f \in \mathbb{X}$ if and only if $f^{*} \in \overline{\mathbb{X}}$, and in this case $\|f\|_{\mathbb{X}}=\left\|f^{*}\right\|_{\overline{\mathbb{X}}}$.
Moreover, it can be verified that $\overline{\mathbb{X}}^{\prime}=\overline{\mathbb{X}^{\prime}}$ and $\|f\|_{\mathbb{X}^{\prime}}=\left\|f^{*}\right\|_{\overline{\mathbb{X}}^{\prime}}$ for the associated space.\par

\medskip
$\circ$ \textbf{Weighted versions of RIBFS $\X$.}
This paragraph mainly introduces the weighted versions of RIBFS $\X$, before that, we recall the definitions of Muckenhoupt weights. We say that a weight $w$ which is a nonnegative and locally integrable function on $\mathbb{R}^{n}$ belongs to the Muckenhoupt class $A_{p}$ with $1<p<\infty,$ if
$$
[w]_{A_{p}}=\sup _{Q} \left(\frac{1}{|Q|} \int_{Q} w(x) d x\right)\left(\frac{1}{|Q|} \int_{Q} w(x)^{1-p^{\prime}} d x\right)^{p-1} <\infty,
$$
and for $w \in A_{\infty},$
$$
[w]_{A_{\infty}}=\sup _{Q} \frac{1}{w(Q)} \int_{Q} M\left(w \chi_{Q}\right)(x) d x,
$$
where the supremum is taken over all cubes $Q \subset \mathbb{R}^{n}.$\par
A weight $w$ belongs to the class $A_1$ if there is a constant $C$ such that
$$
\frac{1}{|Q|} \int_Q w(y) d y \leqslant C \inf _Q w,
$$
and the infimum of these constants $C$ is called the $A_1$ constant of $w.$
Based on this, we can give the distribution function and the decreasing rearrangement with respect to $w$ as follows
$$w_{f}(\lambda)=w\left(\{x \in \mathbb{R}^{n}:|f(x)|>\lambda\}\right) ; \quad f_{w}^{*}(t)=\inf \left\{\lambda \geqslant 0: w_{f}(\lambda) \leqslant t\right\}.$$
Then, we define the weighted version of the space $\X$,
$$\mathbb{X}(w)=\left\{f \in \mathcal{M}:\|f\|_{\mathbb{X}(w)}:=\left\|f_{w}^{*}\right\|_{\overline{\mathbb{X}}}<\infty\right\}.$$
It is well known that $\mathbb{X}^{\prime}(w)=\mathbb{X}(w)^{\prime}$ (see \cite[p. 168]{cur}).\par

\vspace{0.15cm}
$\circ$ \textbf{ Boyd indices and $r$ exponent.} The Boyd indices play a key role in RIBFS and are closely related to many properties. Let's start with the dilation operator
$$ D_{t} f(s)=f\left(\frac{s}{t}\right), 0<t<\infty, f \in \overline{\mathbb{X}} ,$$
and its norm
$$ h_{\mathbb{X}}(t)=\left\|D_{t}\right\|_{\overline{\mathbb{X}} \mapsto \overline{\mathbb{X}}},  0<t<\infty .$$
We can define the lower and upper Boyd indices $p_{\mathbb{X}}$ and $q_{\mathbb{X}}$, by

$$p_{\mathbb{X}}=\lim _{t \rightarrow \infty} \frac{\log t}{\log h_{\X}(t)}=\sup _{1<t<\infty} \frac{\log t}{\log h_{\X}(t)}, \quad q_{\mathbb{X}}=\lim _{t \rightarrow 0^{+}} \frac{\log t}{\log h_{\mathbb{X}}(t)}=\inf _{0<t<1} \frac{\log t}{\log h_{\mathbb{X}}(t)}.$$
By a trivial calculation, we can see that $1 \leq p_{\mathbb{X}} \leq q_{\mathbb{X}} \leq \infty,$ which follows from the fact that $h_{\mathbb{X}}(t)$ is submultiplicative. Here, we take a special case to illustrate these two Boyd indices. If $\X =L^p$, then $h_{\X}(t)=t^{\frac{1}{p}}$ and thus $p_{\X}=q_{\mathbb{X}}=p.$ \par
The Boyd indices of $\mathbb{X}$ and $\mathbb{X}^{\prime}$ have the following relationships: $p_{\mathbb{X}^{\prime}}=\left(q_{\mathbb{X}}\right)^{\prime}$ and $q_{\mathbb{X}^{\prime}}=\left(p_{\mathbb{X}}\right)^{\prime},$ where $p$ and
$p^{\prime}$ are conjugate exponents.\par

Now, we consider the following $r$ exponent, for each $0<r<\infty$, of RIBFS $\mathbb{X}$:
$$\mathbb{X}^{r}=\left\{f \in \mathcal{M}:|f|^{r} \in \mathbb{X}\right\},$$
with the norm $\|f\|_{\X ^{r}}=\left\||f|^{r}\right\|_{\X}^{\frac{1}{r}}$. It is easy to see that
 $p_{\mathbb{X}^{r}}=p_{\mathbb{X}} \cdot r$ and $q_{\mathbb{X}^{r}}=q_{\mathbb{X}} \cdot r$. \par

\vspace{0.15cm}
$\circ$ \textbf{The case of RIQBFS.}
Samely as $L^p$ space, if $\X$ is a RIBFS,  then $\mathbb{X}^{r}$ is still a RIBFS for $r \geq 1.$ However, in general, the space $\mathbb{X}^{r}$ is not necessarily a Banach space with $0<r<1,$ (see \cite[p. 269]{cur}). Therefore, it is natural to consider the case of quasi-Banach space.\par
We say a mapping $\rho^{\prime}: \mathcal{M}^{+} \mapsto[0, \infty)$ is a rearrangement invariant quasi-Banach function norm if $\rho^{\prime}$ satisfies the basic condition \romannumeral2),\romannumeral3), \romannumeral5) and \romannumeral1) is replaced by the following inequality:
$$\rho^{\prime}(f+g) \leq C\left(\rho^{\prime}(f)+\rho^{\prime}(g)\right),$$
where $C$ is an absolute positive constant. Similarly, a rearrangement invariant quasi-Banach function space (RIQBFS) is a collection of all measurable functions which satisfies $\rho^{\prime}(|f|)<\infty$. In order to build the connection between RIBFS and RIQBFS, we impose the following $p$-convex condition with $0<p<1$ on $\X$ which is a RIQBFS (see \cite[p. 3]{gra1}):
$$\left\|\left(\sum_{j=1}^{N}\left|f_{j}\right|^{p}\right)^{\frac{1}{p}}\right\| _{\mathbb{X}} \lesssim\left(\sum_{j=1}^{N}\left\|f_{j}\right\|_{\mathbb{X}}^{p}\right)^{\frac{1}{p}}.$$

In this case, a basic observation is that the $p$-convex property is equivalent to the fact that $\mathbb{X}^{\frac{1}{p}}$ is a RIBFS.
Thereby, applying the Lorentz-Luxemburg's theorem, we can give a norm inscription of the RIQBFS, that is

$$\|f\|_{\mathbb{X}} \simeq \sup \left\{\left(\int_{\mathbb{R}^{n}}|f(x)|^{p} g(x) d x\right)^{\frac{1}{p}}: g \in \mathcal{M}^{+},\|g\|_{\mathbb{Y}^{\prime}} \leq 1\right\},$$
where $\mathbb{Y}^{\prime}$ is the associated space of the RIBFS $\mathbb{Y}=\mathbb{X}^{\frac{1}{p}}$. Following the previous definition, we note that if $\X$ is a RIQBFS, then $p_{\mathbb{X}^{r}}=p_{\mathbb{X}} \cdot r$ and equivalently for $q_\mathbb{X}$. In a similar fashion, we can define $\mathbb{X}(w)$ for a RIQBFS $\X$ and it enjoys that $\mathbb{X}(w)^{r}=\mathbb{X}^{r}(w)$ with $w \in A_{\infty}$ and $0<r<\infty$, (see \cite[p. 269]{cur}).

\subsection{  Modular inequality}
In this subsection, we review some concepts related to Young functions and modular inequalities. Let's start with the definition of quasi-convex.
A function $\phi \in \Phi$ is said to be quasi-convex means that there exist a convex function $\widetilde{\phi}$ and $a_{1} \geqslant 1$ such that
$$\widetilde{\phi}(t) \leq \phi(t) \leq a_{1} \widetilde{\phi}\left(a_{1} t\right), \quad t \geq 0.$$
Given a positive increasing function $\phi,$ similar to the Boyd indices, we can also define the lower and upper dilation indices of $\phi,$ respectively, as follows:
$$i_{\phi}=\lim _{t \rightarrow 0^{+}} \frac{\log h_{\phi}(t)}{\log t}=\sup _{0<t<1} \frac{\log h_{\phi}(t)}{\log t}, \quad I_{\phi}=\lim _{t \rightarrow \infty} \frac{\log h_{\phi}(t)}{\log t}=\inf _{1<t<\infty} \frac{\log h_{\phi}(t)}{\log t},$$
where $h_{\phi}(t)=\sup _{s>0} \frac{\phi(s t)}{\phi(s)}, t>0.$
As mentioned earlier, the dilation indices have a relationship with Boyd indices. For instance, if $\X$ is the Orlicz space induced by $\phi$, then $p_{\mathbb{X}}=i_{\phi}$ and $q_{\mathbb{X}}=I_{\phi}$ (see \cite[p. 274]{cur}).\par
Now we turn to the $\Delta_{2}$ condition. A function $\phi \in \Phi$ satisfies the $\Delta_{2}$ condition  if $\phi$ is doubling (we write $\phi \in \Delta_{2}$).
A crucial fact is that if $\phi$ is quasi-convex, then $i_{\phi} \geqslant 1$ and that $\phi \in \Delta_{2}$ if and only if $I_{\phi}<\infty.$ Moreover, $\bar{\phi} \in \Delta_{2}$ if and only if $i_{\phi}>1,$ where $\bar{\phi}$ is the complementary function of $\phi$ defined in the second subsection.\par
\vspace{0.1cm}
Let $w \in A_{\infty}$ and $\phi \in \Phi$, we define the modular of $f$ as
$$\rho_{w}^{\phi}(f)=\int_{\mathbb{R}^{n}} \phi(|f(x)|) w(x) d x.$$
The collection of functions
$$
Z_{w}^{\phi}=\left\{f: \rho_{w}^{\phi}(f)<\infty\right\}
$$
is called as a modular space.
A multilinear operator $T$ is said to satisfy a modular inequality on $Z_{w}^{\phi}$ if there exist constants $c_{i}^{1}, c_{i}^2>0$ with $i=1,\ldots, m,$ such that
$$\rho_{w}^{\phi}(T \vec{f}) \leq \prod _{i=1}^m c_{i}^{1} \rho_{w}^{\phi}\left(c_{i}^{2} f_i\right).$$\par

\vspace{0.2cm}

\section{ Proofs of Theorems \ref{thm1.1} and \ref{thm1.2}}

This section is devoted to prove Theorem \ref{thm1.1} and Theorem \ref{thm1.2}. To this aim, let us give some notations and lemmas.
Given a sparse family $\mathcal{S}$, we can define multilinear sparse operators $\mathcal{A}_{\mathcal{S}},$ $\mathcal{A}_{\mathcal{S}, b_j}$ and $\mathcal{A}_{\mathcal{S}, b_j}^*$ with $1\leq j\leq m$ as follows,
$$
\begin{aligned}
\mathcal{A}_{\mathcal{S}}(\vec{f})(x) &:=\sum_{Q \in \mathcal{S}} \prod_{i=1}^m\left\langle\left|f_i\right|\right\rangle_Q \chi_Q(x); \\
\mathcal{A}_{\mathcal{S}, b_j}(\vec{f})(x) &:=\sum_{Q \in \mathcal{S}}\left|b_j(x)-b_{j, Q}\right|\left\langle\left|f_j\right|\right\rangle_Q \prod_{i \neq j}\left\langle\left|f_i\right|\right\rangle_Q \chi_Q(x)
\end{aligned}
$$
and
$$
\mathcal{A}_{\mathcal{S}, b_j}^*(\vec{f})(x):=\sum_{Q \in \mathcal{S}}\left\langle\left|\left(b_j-b_{j, Q}\right) f_j\right|\right\rangle_Q \prod_{i \neq j}\left\langle\left|f_i\right|\right\rangle_Q \chi_Q(x) .
$$

The following results of sparse domination (see \cite[Proposition 4.1]{cao}) play a crucial role in our analysis and will be used several times later.
\begin{lemma}[\cite{cao}]\label{lem1.1}
 Let $\sigma \in S_{\rho, \delta}^r(n, m)$ with $0 \leq \rho, \delta \leq 1$ and $r<m n(\rho-1)$. Then, for every compactly supported functions $f_i, i=1, \ldots, m$, there exist $3^n+1$ sparse collections $\mathcal{S}$ and $\left\{\mathcal{S}_i\right\}_{i=1}^{3^n}$ such that
$$
\left|T_\sigma(\vec{f})(x)\right| \lesssim \mathcal{A}_{\mathcal{S}}(\vec{f})(x), \quad \text { a.e. } x \in \mathbb{R}^n,
$$
and
$$
\left|T_{\sigma, \Sigma \mathbf{b}}(\vec{f})(x)\right| \lesssim \sum_{i=1}^{3^n} \sum_{j=1}^m\left(\mathcal{A}_{\mathcal{S}_i, b_j}(\vec{f})(x)+\mathcal{A}_{\mathcal{S}_i, b_j}^*(\vec{f})(x)\right), \quad \text { a.e. } x \in \mathbb{R}^n.
$$
\end{lemma}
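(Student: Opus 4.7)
My plan is to obtain both pointwise sparse bounds via the Lerner--Nazarov abstract sparse domination scheme, adapted to the $m$-linear setting. By the Three Lattice Theorem it suffices, for each of the $3^n$ dyadic lattices $\mathcal{D}^{(j)}$, to construct a sparse family within $\mathcal{D}^{(j)}$ that dominates the part of $T_\sigma(\vec f)$ governed by cubes of that lattice. Fixing a large cube $Q_0$, the sparse family is built by a recursive stopping-time argument: at each stage one selects the maximal sub-cubes $Q\subset Q_0$ where either the local average $\prod_{i=1}^m \langle |f_i|\rangle_{3Q}$ or the multilinear grand maximal truncation
\[
\mathcal{M}_{T_\sigma}^{\#}(\vec f)(x)=\sup_{Q\ni x}\,\esss_{x',x''\in Q}\bigl|T_\sigma(\vec f\chi_{\mathbb{R}^n\setminus 3Q})(x')-T_\sigma(\vec f\chi_{\mathbb{R}^n\setminus 3Q})(x'')\bigr|
\]
exceeds a sufficiently large multiple of the corresponding Lerner-type averaged quantity on $Q_0$. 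Chebyshev combined with the weak endpoint bounds below then forces the selected children to cover at most half of $Q_0$, yielding sparseness with constant $\eta=\tfrac12$.

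The main obstacle, and the step specific to the symbol class, is proving the endpoint weak-type bound $T_\sigma:L^1\times\cdots\times L^1\to L^{1/m,\infty}$ together with the corresponding bound for $\mathcal{M}_{T_\sigma}^{\#}$. This is where the hypothesis $r<mn(\rho-1)$ is critically used. Writing the kernel $K(x,\vec y)$ as the inverse Fourier transform of $\sigma$ in $\vec\xi$ and performing the standard Littlewood--Paley decomposition $\sigma=\sum_k \sigma_k$ with $\sigma_k$ localized at $|\vec\xi|\sim 2^k$, repeated integration by parts against the oscillating phases $e^{2\pi i(x-y_j)\cdot\xi_j}$ gives the size and regularity estimates
\[
|K(x,\vec y)|\lesssim \bigl(|x-y_1|+\cdots+|x-y_m|\bigr)^{-mn},\qquad |\nabla_x K(x,\vec y)|\lesssim \bigl(\textstyle\sum_i |x-y_i|\bigr)^{-mn-1},
\]
where the sums in $k$ converge precisely because $r<mn(\rho-1)$. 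With these Calder\'on--Zygmund type kernel estimates in hand, the weak endpoint bound for $T_\sigma$ follows from the $m$-linear Calder\'on--Zygmund decomposition applied successively to the $f_i$'s, and the bound for $\mathcal{M}_{T_\sigma}^{\#}$ follows because the $x$-regularity controls $|T_\sigma(\vec f\chi_{(3Q)^c})(x')-T_\sigma(\vec f\chi_{(3Q)^c})(x'')|$ for $x',x''\in Q$ by a tail expression dominated by the multilinear Hardy--Littlewood maximal operator.

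For the commutator pointwise bound I would use the now-standard commutator sparse domination scheme. For the $j$-th slot, write
\[
[b_j,T_\sigma]_j(\vec f)(x)=\bigl(b_j(x)-b_{j,Q}\bigr)T_\sigma(\vec f)(x)-T_\sigma\!\bigl(f_1,\ldots,(b_j-b_{j,Q})f_j,\ldots,f_m\bigr)(x),
\]
and rerun the stopping-time argument inside each selected cube $Q$. The first term, with the factor $b_j(x)-b_{j,Q}$ kept outside the averaging, generates the family $\mathcal{A}_{\mathcal{S}_i,b_j}$; the second term, where $b_j-b_{j,Q}$ is absorbed into the $f_j$-slot of the average, generates $\mathcal{A}_{\mathcal{S}_i,b_j}^{*}$. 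Since the kernel estimates and weak-type bounds from the previous paragraph can be reused verbatim, no new analysis on $\sigma$ is needed here; summing over $j=1,\ldots,m$ and over the $3^n$ shifted lattices delivers the claimed pointwise control of $T_{\sigma,\Sigma\mathbf{b}}$.
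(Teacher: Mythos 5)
The paper does not prove this lemma: it is quoted verbatim from \cite[Proposition~4.1]{cao}, so there is no internal proof to compare against. Judged on its own terms, your overall plan (Littlewood--Paley decomposition of the symbol to get kernel estimates, then the Three Lattice Theorem plus a Lerner-type stopping-time scheme driven by a grand maximal truncation, with the usual two-term split for the commutator) is indeed the route taken in \cite{cao}, so the architecture is right. However, your kernel estimates are incorrect in a way that matters.

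Concretely, the claimed gradient bound $|\nabla_x K(x,\vec y)|\lesssim (\sum_i|x-y_i|)^{-mn-1}$ does \emph{not} follow from $r<mn(\rho-1)$. Write $\vec\phi=(x-y_1,\dots,x-y_m)$ and $d=|\vec\phi|$. After the dyadic decomposition $\sigma=\sum_k\sigma_k$ with $|\vec\xi|\sim 2^k$, $N$ integrations by parts give $|K_k|\lesssim 2^{k(r+mn)}\min\bigl(1,(2^{k\rho}d)^{-N}\bigr)$, and summing in $k$ yields, near the diagonal, $|K(x,\vec y)|\lesssim d^{-(r+mn)/\rho}$ when $r+mn>0$, and $|K|\lesssim 1$ otherwise. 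The exponent $(r+mn)/\rho$ is strictly \emph{less} than $mn$, so your size bound is (harmlessly) suboptimal; but $\nabla_x K_k$ carries an extra factor $\max(2^{k},2^{k\delta})=2^k$, which raises the near-diagonal exponent to $(r+mn+1)/\rho$. For $\rho<1$ and $r$ just below $mn(\rho-1)$, this exceeds $mn+1$, so the gradient is \emph{more} singular than your claimed $d^{-(mn+1)}$. Your derivation of the Calder\'on--Zygmund regularity would therefore fail as written, and the same issue affects the $\nabla_{y_j}$ estimates, which you omit but which are equally needed for the weak $(1,\dots,1)$ bound via the multilinear CZ decomposition.

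The correct observation, and the real content of the hypothesis $r<mn(\rho-1)$, is that the kernel is \emph{integrable}: $\int_{|\vec\phi|\le 1}d^{-(r+mn)/\rho}\,d\vec\phi<\infty$ precisely when $(r+mn)/\rho<mn$, i.e.\ when $r<mn(\rho-1)$, and the far-off-diagonal part decays rapidly, so $\sup_x\int|K(x,\vec y)|\,d\vec y<\infty$. Integrability together with the fast decay gives the Hörmander/Dini-type regularity for free (no pointwise gradient bound is needed) and also directly yields the pointwise control $\mathcal{M}^{\#}_{T_\sigma}(\vec f)\lesssim\mathcal{M}(\vec f)$ and the weak endpoint estimate. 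You should replace the false gradient inequality with this integrability argument; the rest of your scheme then goes through.
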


 We also need the following Lemma in \cite[Lemma 3.3]{and}.
\begin{lemma}[\cite{and}]\label{lem1.2}
Let $\mathbb{X}$ be an RIQBFS which is $p$-convex for some $0<p \leq 1$. If $1<p_{\mathbb{X}}<\infty$, then for all $w \in A_{p_{\mathbb{X}}},$ we have
$$
\|M\|_{\mathbb{X}(w) \mapsto \mathbb{X}(w)} \leq C[w]_{A_{p_{\mathbb{X}}}}^{1 / p_{\mathbb{X}}},
$$
where $C$ is an absolute constant only depending on $p_{\mathbb{X}}$ and $n$.
\end{lemma}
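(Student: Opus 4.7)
My plan is to reduce the RIQBFS setting to a RIBFS setting by exploiting the $p$-convexity of $\X$, and then to establish the weighted bound on the resulting RIBFS by combining a sharp quantitative $L^{p}(w)$ estimate for $M$ with a weighted Boyd-type interpolation between two $L^{p}$ endpoints.

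First I would set $\mathbb{Y}:=\X^{1/p}$, which by the $p$-convexity is a RIBFS with Boyd indices $p_{\mathbb{Y}}=p_\X/p$ and $q_{\mathbb{Y}}=q_\X/p$. The identity $\|g\|_{\X(w)}=\bigl\||g|^{p}\bigr\|_{\mathbb{Y}(w)}^{1/p}$ turns the target inequality on $\X(w)$ into an equivalent inequality on the RIBFS $\mathbb{Y}(w)$, where the classical machinery of decreasing rearrangements, Lorentz--Luxemburg duality, and Boyd interpolation is directly available. Note that $1<p_\X$ translates to $p_{\mathbb{Y}}>1/p\ge 1$, so the Hardy--Littlewood maximal operator is a priori bounded on $\mathbb{Y}(w)$ once $w$ lies in a suitable Muckenhoupt class.

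For the RIBFS step I would fix two endpoints $1<p_0<p_\X\le q_\X<p_1<\infty$ and invoke the sharp Buckley/Hyt\"onen--P\'erez bound $\|M\|_{L^{q}(w)\to L^{q}(w)}\lesssim [w]_{A_{q}}^{1/(q-1)}$ at $q=p_0,p_1$, combined with the sharp reverse H\"older inequality for $A_{p_\X}$ weights (which transfers the $A_{p_\X}$ hypothesis down to $A_{p_0}$ at a cost controlled only by $[w]_{A_\infty}$). A weighted Boyd interpolation theorem for sublinear operators on RIBFS, in the spirit of Berezhnoi and Curbera--Soria, then assembles these two endpoint estimates into $\|Mf\|_{\mathbb{Y}(w)}\lesssim [w]_{A_{p_\X}}^{1/p_{\mathbb{Y}}}\|f\|_{\mathbb{Y}(w)}$; undoing the $p$-convexity translation via the identity above yields the claimed inequality on $\X(w)$.

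The main obstacle is the quantitative bookkeeping: one must tune the endpoints $p_0,p_1$ as delicate functions of the weight data, using the Hyt\"onen--P\'erez openness quantification $w\in A_{p_\X}\Rightarrow w\in A_{p_\X-\varepsilon}$ with $\varepsilon\simeq 1/([w]_{A_\infty}\log[w]_{A_\infty})$, so that the two endpoint constants $[w]_{A_{p_i}}^{1/(p_i-1)}$ and the Boyd interpolation constants collapse into a single factor $[w]_{A_{p_\X}}^{1/p_\X}$ rather than a larger power. This is exactly the constant-tracking refinement carried out in Anderson--Hu \cite{and}, and is the step I would expect to require the most care in a fully detailed proof.
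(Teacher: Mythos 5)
This lemma is cited verbatim from Anderson--Hu \cite{and}; the paper gives no proof of its own, so there is nothing in-paper to compare your argument against, and the proposal must be judged on its own terms.

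On those terms, there is a genuine gap in the step where you ``undo the $p$-convexity translation.'' With $\Y := \X^{1/p}$, the identity $\|g\|_{\X(w)} = \||g|^{p}\|_{\Y(w)}^{1/p}$ turns the target estimate into $\|(Mf)^p\|_{\Y(w)} \lesssim C^p\,\||f|^p\|_{\Y(w)}$; writing $h=|f|^p$, the operator you must control on the RIBFS $\Y(w)$ is $h \mapsto \bigl(M(|h|^{1/p})\bigr)^p$, i.e.\ the $1/p$-power maximal operator $M_{1/p}$, not $M$ itself. Your proposal instead proves a bound for $M$ on $\Y(w)$ via Boyd interpolation and then ``undoes,'' but applying such a bound to $h=|f|^p$ only controls $\|M(|f|^p)\|_{\Y(w)}$, and for $0<p<1$ Jensen's inequality (concavity of $t \mapsto t^p$) gives the pointwise bound $M(|f|^p) \le (Mf)^p$ --- the wrong direction. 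Bounding the smaller quantity $M(|f|^p)$ in $\Y(w)$ says nothing about $\|(Mf)^p\|_{\Y(w)} = \|Mf\|_{\X(w)}^p$. The interpolation must instead be run for $M_{1/p}$, whose $L^q(w)$ operator norm equals $\|M\|_{L^{pq}(w)}^p \lesssim [w]_{A_{pq}}^{p/(pq-1)}$ for $pq>1$, with endpoints placed around the Boyd indices $p_{\Y}=p_\X/p$ and $q_{\Y}=q_\X/p$ of $\Y$ rather than around $p_\X,q_\X$, and the reverse-H\"older openness then has to be invoked for the $A_{pq_0}$ constant rather than $A_{p_0}$.

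A secondary caveat: interpolating Buckley's sharp endpoint constants $[w]_{A_q}^{1/(q-1)}$ naturally yields the exponent $1/(p_\X-1)$ as the lower endpoint is pushed up toward $p_\X$, not $1/p_\X$. Since Buckley's exponent $1/(q-1)$ is sharp for $\X = L^q$, the $1/p_\X$ appearing in the statement (and inherited by the paper's Theorems) is in tension with sharpness and looks like a misprint for $1/(p_\X-1)$; correspondingly, your final ``collapse'' claim that the tuned endpoint data assemble into $[w]_{A_{p_\X}}^{1/p_\X}$ appears to promise more than the $L^p$ endpoint estimates can deliver.
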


\begin{proof}[Proof of Theorem $\ref{thm1.1}$]

We define the $m$-product operator $P_m$ by
$$
P_m(\vec{f})(x)=P_m\left(f_1, f_2, \ldots, f_m\right)(x)=\prod_{j=1}^m f_j(x).
$$

Recall that for any $1<p<\infty, A_p=\bigcup\limits_{q\in (1,p)}A_q.$ Therefore, each $w\in A_{\underset i{\min}{\{p_{\X_i}\}}}$ \\
implies that $w\in A_{p_{\X_i}}$ with $1\leq i\leq m,$ and $[w]_{A_{p_{\X_i}}} \leq [w]_{A_{\underset i{\min}{\{p_{\X_i}\}}}} <\infty.  $  \\
By the definition of $\X$ and Lemma \ref{lem1.1}, we have
\begin{equation}\label{ie3.1}
\begin{aligned}
{\left\|T_\sigma (\vec{f})\right\|}_ {\X(w)}&= \sup _{\|g\|_{\X^{\prime}(w)} \leq 1}\left|\int_{\mathbb{R}^{n}} T_\sigma (\vec{f})(x) g(x) w(x) d x\right| \\
&\lesssim \sup _{\|g\|_{\X^{\prime}(w)}\leq 1}\sum_{Q \in \mathcal{S}}\prod_{i=1}^m {\langle |f_i |\rangle}_Q \int_Q |g(x)|w(x)dx  \\
&\leq \sup _{\|g\|_{\X^{\prime}(w)}\leq 1} \sum_{Q \in \mathcal{S}}
\left(\frac{1}{w(Q)} \int_Q \left(\mathcal{M}(\vec{f})(x)\right)^{\frac{1}{2}}
\left( M_w^{\mathcal{D}} g(x) \right)^{\frac{1}{2}} w(x)dx \right)^2 w(Q).
\end{aligned}
\end{equation}
Assume that $\mathcal{S}$ is a $\eta$-sparse family (i.e., $|E(Q)|\geq \eta |Q|$ holds for any $Q\in \mathcal{S}$) where $\eta$ is an absolute constant depending only on $n.$  Therefore, for each dyadic cube $R \in \mathcal{S},$ it holds that
\begin{equation}\label{ie3.11}
\begin{aligned}
\sum_{Q \subseteq R}w(Q)&=\sum_{Q \subseteq R} \frac{w(Q)}{|Q|}|Q|\leq \frac{1}{\eta}\sum_{Q \subseteq R}\frac{w(Q)}{|Q|}|E(Q)|  \\
&\leq \frac{1}{\eta}\sum_{Q \subseteq R} \int_{E(Q)} M(w\chi_Q)(x)dx   \\
&\leq \frac{1}{\eta}\int_{R} M(w\chi_R)(x)dx     \\
&\leq \frac{1}{\eta} w(R) [w]_{A_\infty}.
\end{aligned}
\end{equation}\par
Combining inequality (\ref{ie3.11}) with the Carleson embedding theorem (\cite[Theorem 4.5]{hyt}) and using the generalized H\"{o}lder's inequality together with (\ref{ie3.1}), it may lead to
\begin{equation*}
\begin{aligned}
{\left\|T_\sigma (\vec{f})\right\|}_ {\X(w)}& \lesssim\frac{4}{\eta}[w]_{A_\infty} \sup _{\|g\|_{\X^{\prime}(w)} \leq 1} \int_{\mathbb{R}^n} \mathcal{M}(\vec{f})(x)  M_w^{\mathcal{D}}  g(x) w(x) dx \\
&\lesssim [w]_{A_\infty} \left\|\mathcal{M}(\vec{f})\right\|_{\X(w)} \sup _{\|g\|_{\X^{\prime}(w)} \leq 1} {\left\| M_w^{\mathcal{D}} g \right\|}_{\X'(w)}   \\
&\leq [w]_{A_\infty} {\left\|P_m((Mf_1,\ldots,Mf_n))\right\|}_{\X(w)},
\end{aligned}
\end{equation*}
where the last inequality follows from the boundedness of $M_w^{\mathcal{D}}$ in \cite{and} or \cite[Theorem 2.3]{cur}.\par
Finally, Lemma \ref{lem1.2}, together with the boundedness of $P_m$ and \cite[Corollary 6.11]{cur}, implies that
\begin{equation}
\begin{aligned}
{\left\|T_\sigma (\vec{f})\right\|}_ {\X(w)}& \lesssim  [w]_{A_\infty} \prod_{i=1}^m {\left\|Mf_i\right\|}_{\X_i(w)} \lesssim [w]_{A_\infty} \prod_{i=1}^m [w]^{\frac{1}{p_{\X_i}}}_{A_{p_{\X_i}}} {\left\|f_i\right\|}_{\X_i(w)}.
\end{aligned}
\end{equation}
Then we finish the proof of Theorem \ref{thm1.1}.
\end{proof}\par
As a corollary of Theorem \ref{thm1.1}, we obtain the following result.

\begin{corollary}\label{cor1.1}
Assume that $\sigma \in S_{\rho, \delta}^r(n, m)$ with $0 \leq \rho, \delta \leq 1$ and $r<m n(\rho-1)$.
Let $\frac{1}{p}=\frac{1}{p_1}+\cdots+\frac{1}{p_m}$, with $1<p_1,\cdots, p_m < \infty.$ If $w \in A_{\underset i{\min}{\{p_i\}}},$ then $T_\sigma $ is bounded from  $L^{p_1}(w)\times \cdots \times L^{p_m}(w)$ to $ L^{p}(w).$
\end{corollary}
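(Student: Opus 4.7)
The plan is to verify that Corollary \ref{cor1.1} follows directly from Theorem \ref{thm1.1} by specializing to the Lebesgue setting. First I would recall that each $L^{p_i}$ (and $L^p$) is a rearrangement invariant Banach function space, and that its Boyd indices satisfy $p_{L^{p_i}}=q_{L^{p_i}}=p_i$ (which is immediate from $h_{L^{p_i}}(t)=t^{1/p_i}$ as noted in Section 2.3). Consequently the hypothesis $1<p_\X,p_{\X_i}\leq q_\X,q_{\X_i}<\infty$ in Theorem \ref{thm1.1} reduces precisely to $1<p,p_i<\infty$, and $\min_i\{p_{\X_i}\}=\min_i\{p_i\}$, so the weight class $A_{\min_i\{p_{\X_i}\}}$ is exactly $A_{\min_i\{p_i\}}$.

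Next I would check that the $m$-product operator $P_m$ maps $\overline{L^{p_1}}\times\cdots\times \overline{L^{p_m}}\to \overline{L^p}$. Since Luxemburg's representation identifies $\overline{L^{p_i}}$ with $L^{p_i}(\mathbb{R}^+,dt)$, this is exactly the classical H\"older inequality: for $\frac{1}{p}=\sum_{i=1}^m \frac{1}{p_i}$ one has
\[
\|P_m(\vec f)\|_{L^p}=\Bigl\|\prod_{i=1}^m f_i\Bigr\|_{L^p}\leq \prod_{i=1}^m \|f_i\|_{L^{p_i}}.
\]
Hence all the structural assumptions of Theorem \ref{thm1.1} are in force.

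Applying Theorem \ref{thm1.1} with $\X=L^p$ and $\X_i=L^{p_i}$, for every $w\in A_{\min_i\{p_i\}}$ we obtain
\[
\|T_\sigma(\vec f)\|_{L^p(w)}\lesssim [w]_{A_\infty}\prod_{i=1}^m [w]_{A_{p_i}}^{1/p_i}\|f_i\|_{L^{p_i}(w)},
\]
which in particular yields the qualitative boundedness $L^{p_1}(w)\times\cdots\times L^{p_m}(w)\to L^p(w)$ asserted in Corollary \ref{cor1.1}. I do not anticipate any obstacle here, since the only content beyond Theorem \ref{thm1.1} is the identification of the Boyd indices of $L^{p_i}$ and the H\"older bound for $P_m$; both are standard and require no further argument.
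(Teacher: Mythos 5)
Your proof is correct and follows the same overall strategy as the paper's: specialize Theorem \ref{thm1.1} to $\X=L^p$, $\X_i=L^{p_i}$, identify the Boyd indices as $p_i$, and verify that $P_m$ maps $\overline{\X}_1\times\cdots\times\overline{\X}_m$ to $\overline{\X}$. The only difference is in how the $P_m$ bound is established: you observe that $\overline{L^{p_i}}=L^{p_i}(\mathbb{R}^+,dt)$ and invoke the classical H\"older inequality directly, which is clean and immediate for the Lebesgue case. The paper instead runs a more abstract duality argument (writing $\|P_m(\vec f)\|_{\overline\X}$ as a supremum against $g\in\overline\X'$, splitting $|g|=\prod_i|g|^{p/p_i}$, applying H\"older with exponents $p_i/p$, and then using the RIBFS $r$-exponent identity $\|f\|_{\overline{\X^r}}=\|f\|_{\overline{\X}^r}$), mirroring the general-RIBFS scheme of Curbera et al. Your shortcut is perfectly valid since $L^p$ is concrete and its Luxemburg representation is explicit; the paper's longer route is designed to illustrate the mechanism that would also work in more general rearrangement invariant settings, but it buys nothing extra here.
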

\begin{proof}
The proof of Corollary \ref{cor1.1} follows from the same scheme as in the proof of \cite[Corollary 6.11]{cur}, and hence we outline it briefly.\\
 According to Theorem \ref{thm1.1}, when $\mathbb{X}=L^p, \mathbb{X}_i=L^{p_i}$ with $i=1,\ldots, m,$ it suffices to show that for $\frac{1}{p}=\frac{1}{p_1}+\cdots+\frac{1}{p_m}$, $P_m$ is bounded from $\overline{\mathbb{X}}_1 \times \cdots \times \overline{\mathbb{X}}_m$ to $\overline{\mathbb{X}}.$\\

 For $i=1,\ldots, m,$ we take and fix non-negative functions $f_i\in \overline{\mathbb{X}}_i.$ Then it is clear that
$$
\begin{aligned}
\left\|P_m(\vec{f})\right\|_{\overline{\mathbb{X}}}&=\sup_{\|g\|_{{\overline{\mathbb{X}}}^{'}}\leq 1}\left|\int_0^{\infty}  g(t)\prod_{i=1}^m f_i(t) d t\right| \\
& \leqslant \sup_{\|g\|_{{\overline{\mathbb{X}}}^{'}}\leq 1}\int_0^{\infty}  \prod_{i=1}^m (f_i(t)|g(t)|^{\frac{p}{p_i}}) d t\\
& \leqslant \sup_{\|g\|_{{\overline{\mathbb{X}}}^{'}}\leq 1} \prod_{i=1}^m\left(\int_0^{\infty} f_i^{\frac{p_i}{p}}(t) |g(t)| d t\right)^{\frac{p}{p_i}} \\
& \leqslant \sup_{\|g\|_{{\overline{\mathbb{X}}}^{'}}\leq 1} \prod_{i=1}^m \|f_i^{\frac{p_i}{p}}\|^{\frac{p}{p_i}}_{{\overline{\mathbb{X}}}}  \|g\|^{\frac{p}{p_i}}_{{\overline{\mathbb{X}}}^{'}}
=\prod_{i=1}^m\left\|f_i\right\|_{\overline{\mathbb{X}}_i}.
\end{aligned}
$$
For the last inequality, it is enough to show that for any $0<r<\infty,$ $\|f\|_{\overline{{\mathbb{X}}^{r}}}=\|f\|_{\overline{{\mathbb{X}}}^{r}}.$ In fact,  let $g^*=f,$ by the fact that $\|g\|_{\mathbb{X}}=\|g^*\|_{\overline{{\mathbb{X}}}},$ we deduce that $$\|f\|_{\overline{{\mathbb{X}}^{r}}}=\|g^*\|_{\overline{{\mathbb{X}}^{r}}}=\|g\|_{\mathbb{X}^r}=
\||g|^r\|_{\mathbb{X}}^{\frac{1}{r}}=\|(|g|^r)^*\|_{\overline{\mathbb{X}}}^{\frac{1}{r}},$$
which, together with the property in \cite[P.50]{gra1}, further implies that
 $$\|f\|_{\overline{{\mathbb{X}}^{r}}}=\|(|g|^r)^*\|_{\overline{\mathbb{X}}}^{\frac{1}{r}}=
 \|(g^*)^r\|_{\overline{\mathbb{X}}}^{\frac{1}{r}}=\|g^*\|_{\overline{{\mathbb{X}}}^{r}}=
 \|f\|_{\overline{{\mathbb{X}}}^{r}}.$$
\end{proof}
\begin{remark}
Note that Corollary \ref{cor1.1} is a single-weight version of the main results in \cite{cao}. The condition that $P_m$ is bounded in the Theorem \ref{thm1.1} is natural, since it holds automatically when we return back to the classical spaces of $L^{p_1}\times \cdots \times L^{p_m} $, and the method is still valid for the classical Lorentz spaces. However, when dealing with the more general rearrangement invariant Banach function spaces, we do need this condition to deal with the multilinear maximal function.
\end{remark}

As another consequence of Theorem \ref{thm1.1}, we have the following corollary.
\begin{corollary}
Assume that $\sigma \in S_{\rho,\delta}^r(n,m)$ with $0 \leq \rho,\delta \leq 1, r < mn(\rho-1).$ Let $\mathbb{X},\mathbb{X}_i$ be rearrangement quasi-invariant Banach function spaces with $p$-convex for some $p>0,$ and satisfy $1<p_{\mathbb{X}},p_{\mathbb{X}_i}\leq q_{\mathbb{X}}, q_{\mathbb{X}_i}<\infty$ for $i=1,2\cdots, m .$  Assume that $m$-product operator $P_m$ maps
$\overline{\mathbb{X}}_1\times \cdots \times\overline{\mathbb{X}}_m$ to $\overline{\mathbb{X}},$ then for every
$ w \in A_{\min\limits_{i}{\{p_{\mathbb{X}_i}\}}/p},$
\begin{equation*}
\begin{aligned}
\left\| |T_{\sigma}(\vec{f})|^\frac{1}{p}\right\|_{\mathbb{X}(w)} \lesssim [w]_{A_\infty}^\frac{1}{p}  \prod\limits_{i=1}\limits^m [w]_{A_{p_{\mathbb{X}_i}/p}}^{\frac{1}{pp_{\mathbb{X}_i}}} \left\|| f_i|^\frac{1}{p}\right\|_{\mathbb{X}_i(w)}.
\end{aligned}
\end{equation*}
\end{corollary}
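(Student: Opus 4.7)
The plan is to reduce the corollary to Theorem \ref{thm1.1} by passing to the $1/p$-th power RIBFS associated with each quasi-Banach space in sight. Concretely, I would introduce the auxiliary spaces $\mathbb{Y}=\mathbb{X}^{1/p}$ and $\mathbb{Y}_i=\mathbb{X}_i^{1/p}$ for $i=1,\ldots,m$. By the discussion in Section 2.3, the $p$-convexity of $\mathbb{X}$ and of each $\mathbb{X}_i$ is precisely the statement that $\mathbb{Y}$ and the $\mathbb{Y}_i$ are RIBFS. Their Boyd indices transform as $p_{\mathbb{Y}}=p_{\mathbb{X}}/p$, $q_{\mathbb{Y}}=q_{\mathbb{X}}/p$, and similarly $p_{\mathbb{Y}_i}=p_{\mathbb{X}_i}/p$, $q_{\mathbb{Y}_i}=q_{\mathbb{X}_i}/p$. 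Since $p<\min_i p_{\mathbb{X}_i}$ is implicit in the hypothesis $w\in A_{\min_i p_{\mathbb{X}_i}/p}$, we obtain $1<p_{\mathbb{Y}},p_{\mathbb{Y}_i}\le q_{\mathbb{Y}},q_{\mathbb{Y}_i}<\infty$, so the Boyd-index requirement of Theorem \ref{thm1.1} is met.

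Next I would transfer the $m$-product boundedness to the new spaces. Using the defining identity $\|g\|_{\overline{\mathbb{Y}}}=\||g|^{1/p}\|_{\overline{\mathbb{X}}}^{p}$ together with its counterpart on each $\overline{\mathbb{Y}}_i$, the hypothesized boundedness of $P_m\colon \overline{\mathbb{X}}_1\times\cdots\times\overline{\mathbb{X}}_m\to\overline{\mathbb{X}}$, applied to the tuple $(|f_1|^{1/p},\ldots,|f_m|^{1/p})$, gives the boundedness $P_m\colon \overline{\mathbb{Y}}_1\times\cdots\times\overline{\mathbb{Y}}_m\to\overline{\mathbb{Y}}$ with the same constant. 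On the weighted side, the identity $\mathbb{X}(w)^{1/p}=\mathbb{X}^{1/p}(w)$, valid for $w\in A_\infty$, yields $\|g\|_{\mathbb{Y}(w)}^{1/p}=\||g|^{1/p}\|_{\mathbb{X}(w)}$ and likewise an identification for each $\mathbb{Y}_i(w)$.

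With all hypotheses in place, Theorem \ref{thm1.1} applied to the tuple $(\mathbb{Y},\mathbb{Y}_1,\ldots,\mathbb{Y}_m)$ produces
$$\|T_\sigma(\vec{f})\|_{\mathbb{Y}(w)}\lesssim [w]_{A_\infty}\prod_{i=1}^{m}[w]_{A_{p_{\mathbb{Y}_i}}}^{1/p_{\mathbb{Y}_i}}\|f_i\|_{\mathbb{Y}_i(w)}$$
for every $w\in A_{\min_i p_{\mathbb{Y}_i}}=A_{\min_i p_{\mathbb{X}_i}/p}$. Raising both sides to the $1/p$-th power and translating each $\mathbb{Y}$-norm back to the corresponding $\mathbb{X}$-norm via the identities above, and substituting $p_{\mathbb{Y}_i}=p_{\mathbb{X}_i}/p$ into the $A_p$-exponents, yields the claimed estimate.

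The argument is essentially bookkeeping rather than new analysis; the only delicate point I foresee is the careful tracking of the exponents on the $A_p$-characteristics when raising the Theorem \ref{thm1.1} estimate to the $1/p$-th power and identifying $[w]_{A_{p_{\mathbb{Y}_i}}}=[w]_{A_{p_{\mathbb{X}_i}/p}}$. Since Theorem \ref{thm1.1} already carries all the analytic content (sparse domination via Lemma \ref{lem1.1}, the Carleson embedding, and the maximal-function bound of Lemma \ref{lem1.2}), no additional ingredient is required for this corollary.
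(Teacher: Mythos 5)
Your approach---reducing to Theorem \ref{thm1.1} by passing to the RIBFS $\mathbb{Y}=\mathbb{X}^{1/p}$, $\mathbb{Y}_i=\mathbb{X}_i^{1/p}$---is exactly the paper's one-line proof, and the transfer steps you outline (Boyd indices $p_{\mathbb{Y}_i}=p_{\mathbb{X}_i}/p$, the $m$-product bound on the new spaces, the weighted identity $\mathbb{X}^{1/p}(w)=\mathbb{X}(w)^{1/p}$ giving $\|g\|_{\mathbb{Y}(w)}^{1/p}=\||g|^{1/p}\|_{\mathbb{X}(w)}$) are all correct. One bookkeeping point you should not have glossed over: Theorem \ref{thm1.1} applied to the $\mathbb{Y}$-spaces gives the factor $[w]_{A_{p_{\mathbb{Y}_i}}}^{1/p_{\mathbb{Y}_i}}=[w]_{A_{p_{\mathbb{X}_i}/p}}^{p/p_{\mathbb{X}_i}}$, and raising the whole inequality to the power $1/p$ produces $[w]_{A_{p_{\mathbb{X}_i}/p}}^{1/p_{\mathbb{X}_i}}$, whereas the corollary as printed displays the exponent $\tfrac{1}{p\,p_{\mathbb{X}_i}}$. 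These differ by a factor of $p$. Your closing remark that the substitution ``yields the claimed estimate'' should therefore have been checked explicitly; the mismatch almost certainly reflects a misprint in the corollary's exponent rather than a defect in your (or the paper's) reduction, but as written your proposal asserts agreement with a formula it does not in fact reproduce.
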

\begin{proof}
This corollary follows easily from the fact that $\mathbb{X}^{\frac{1}{p}}$ is a RIBFS and Boyd indices satisfy $p_{\mathbb{X}^{\frac{1}{p}}}=\frac{p_{\X}}{p}.$
\end{proof}

In order to prove Theorem \ref{thm1.2}, we need the following lemma which can be found in \cite[Lemma 3.1]{cao1}.
 \begin{lemma}[\cite{cao1}]\label{lem1.3}
 Let $s>1, t>0$, and $\omega \in A_{\infty}$. Then there holds

$$
\begin{aligned}
&\left\|\omega^{1 / s}\right\|_{L^s(\log L)^{s t}, Q} \lesssim[\omega]_{A_{\infty}}^t\langle\omega\rangle_Q^{1 / s},\\
&\|f \omega\|_{L(\log L)^t, Q} \lesssim[\omega]_{A_{\infty}}^t \inf _{x \in Q} M_\omega\left(|f|^s\right)(x)^{1 / s}\langle\omega\rangle_Q. \\
\end{aligned}
$$
\end{lemma}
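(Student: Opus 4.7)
The plan is to reduce both estimates to the sharp reverse H\"older inequality of Hyt\"onen-P\'erez, which asserts that for every $\omega \in A_\infty$ the exponent $r_\omega := 1 + \tfrac{1}{\tau_n [\omega]_{A_\infty}}$ satisfies $\langle \omega^{r_\omega}\rangle_Q^{1/r_\omega} \le 2\langle \omega\rangle_Q$ uniformly in $Q$. The first inequality will follow by combining this reverse H\"older bound with a pointwise comparison between $\log(e+u)^{st}$ and a power of $u$ with exponent $\sim 1/[\omega]_{A_\infty}$; the second will be deduced from the first via a generalized H\"older inequality in Orlicz norms.

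For the first estimate, I would unravel the Luxemburg norm and reduce the claim to showing that, for $\lambda := C\,[\omega]_{A_\infty}^t \langle\omega\rangle_Q^{1/s}$ with $C = C(n,s,t)$ sufficiently large,
\begin{equation*}
\int_Q \omega(x)\, \log\bigl(e + \omega(x)^{1/s}/\lambda\bigr)^{st}\, dx \;\le\; \lambda^s |Q|.
\end{equation*}
I would split the integration into $E := \{x\in Q:\omega(x)\le \lambda^s\}$ and $E^c$. On $E$ the logarithm is bounded, and the contribution is $\le C_{s,t}\langle\omega\rangle_Q|Q| \le \tfrac12 \lambda^s|Q|$ since $\lambda^s$ already carries the factor $[\omega]_{A_\infty}^{st}\langle\omega\rangle_Q$. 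On $E^c$ I would invoke $\log(e+u)^{st} \le (st/(e\gamma))^{st} u^{\gamma}$ for $u\ge 1$ with the calibrated choice $\gamma := s/(\tau_n [\omega]_{A_\infty})$, so that $1 + \gamma/s = r_\omega$ and the sharp reverse H\"older applies directly to $\omega^{1+\gamma/s}$; this gives
\begin{equation*}
\int_{E^c} \omega\, \log(e+\omega^{1/s}/\lambda)^{st}\, dx \;\lesssim\; C_\gamma\, \lambda^{-\gamma}\, |Q|\, \langle\omega\rangle_Q^{1+\gamma/s}.
\end{equation*}
The identity $C_\gamma \approx [\omega]_{A_\infty}^{st}$ combines with $\lambda^{-\gamma}\langle\omega\rangle_Q^{1+\gamma/s} = C^{-\gamma}[\omega]_{A_\infty}^{-t\gamma}\langle\omega\rangle_Q$, whose residual factors $C^{-\gamma}$ and $[\omega]_{A_\infty}^{-t\gamma}$ are uniformly bounded because $\gamma[\omega]_{A_\infty}$ is bounded, producing the required control $\lesssim [\omega]_{A_\infty}^{st}\langle\omega\rangle_Q |Q| \le \tfrac12\lambda^s|Q|$.

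For the second inequality I would factor $f\omega = (f\omega^{1/s})\cdot \omega^{1/s'}$ with $1/s + 1/s' = 1$ and apply the generalized H\"older inequality in Orlicz norms with the triple of Young functions $A(u) = u^s$, $B(u) = u^{s'}(\log(e+u))^{s't}$ and $\Phi(u) = u(\log(e+u))^t$, whose inverses satisfy $A^{-1}(v)B^{-1}(v) \lesssim \Phi^{-1}(v)$. This yields
\begin{equation*}
\|f\omega\|_{L(\log L)^t, Q} \lesssim \|f\omega^{1/s}\|_{L^s, Q}\, \|\omega^{1/s'}\|_{L^{s'}(\log L)^{s't}, Q}.
\end{equation*}
The first factor equals $\bigl(|Q|^{-1}\int_Q |f|^s\omega\bigr)^{1/s} \le \langle\omega\rangle_Q^{1/s}\inf_{x\in Q}M_\omega(|f|^s)(x)^{1/s}$ by the very definition of $M_\omega$, and the second factor is controlled by applying the already-established first inequality with $s$ replaced by $s'$, producing $\lesssim [\omega]_{A_\infty}^t\langle\omega\rangle_Q^{1/s'}$. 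The product collapses to exactly $[\omega]_{A_\infty}^t \inf_{x\in Q} M_\omega(|f|^s)(x)^{1/s}\langle\omega\rangle_Q$.

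The delicate point is the calibration of $\gamma$ in the first estimate: it must be small enough that the sharp reverse H\"older applies (forcing $\gamma \lesssim 1/[\omega]_{A_\infty}$), but the constant $C_\gamma = (st/(e\gamma))^{st}$ then inflates by precisely $[\omega]_{A_\infty}^{st}$ — exactly the power of $[\omega]_{A_\infty}$ needed to match $\lambda^s$ on the right-hand side. Verifying that all residual $\gamma$-dependent factors (in particular $[\omega]_{A_\infty}^{-t\gamma}$, $C^{-\gamma}$, and the implicit constant $2^{r_\omega}$) remain uniformly bounded in $[\omega]_{A_\infty}$ is the most technical piece of the bookkeeping, but none of them cause any trouble since $\gamma [\omega]_{A_\infty}$ is a bounded quantity.
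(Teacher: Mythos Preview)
The paper does not prove this lemma: it is quoted verbatim from \cite[Lemma 3.1]{cao1} and used as a black box. Your argument is correct and is, in fact, the standard proof of this result. The first estimate is exactly the way it is usually done: calibrate the exponent $\gamma = s/(\tau_n[\omega]_{A_\infty})$ so that the sharp reverse H\"older inequality of Hyt\"onen--P\'erez applies to $\omega^{1+\gamma/s}$, and observe that the resulting constant $(st/\gamma)^{st}$ contributes precisely the factor $[\omega]_{A_\infty}^{st}$. Your bookkeeping of the residual factors $C^{-\gamma}$, $[\omega]_{A_\infty}^{-t\gamma}$ and $2^{r_\omega}$ is correct, since each is uniformly bounded by virtue of $\gamma[\omega]_{A_\infty}\le s/\tau_n$. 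For the second estimate, the factorization $f\omega=(f\omega^{1/s})\cdot\omega^{1/s'}$ together with the generalized H\"older inequality for the triple $(u^s,\,u^{s'}\log(e+u)^{s't},\,u\log(e+u)^t)$ is again the standard route, and the verification $A^{-1}(v)B^{-1}(v)\approx v^{1/s}\cdot v^{1/s'}\log(e+v)^{-t}=\Phi^{-1}(v)$ is straightforward. So there is nothing to correct; you have supplied the proof that the paper omits.
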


\begin{proof}[Proof of Theorem $\ref{thm1.2}$]
By Lemma \ref{lem1.1} and the definition of $\X(w),$ we can control the boundedness of $T_{\sigma,\Sigma \mathbf{b}}$ by
\begin{equation}\label{eq3}
\begin{aligned}
{||T_{\sigma,\Sigma \mathbf{b}}(\vec{f})||}_{\X{(w)}}&=\sup _{\|g\|_{\X^{\prime}(w)} \leq 1} \left|\int_{\mathbb{R}^n} T_{\sigma,\Sigma \mathbf{b}}(\vec{f})(x)g(x)w(x)dx \right|   \\
&\leq \sum_{i=1}^{3^n}\sum_{j=1}^{m} \left( \sup _{\|g\|_{\X^{\prime}(w)} \leq 1} \int_{\mathbb{R}^n} \mathcal{A}_{\mathcal{S}_i,b_j}(\vec{f})(x)|g(x)|w(x)dx\right.\\
&\left.\qquad+ \sup _{\|g\|_{\X^{\prime}(w)} \leq 1}\int_{\mathbb{R}^n} \mathcal{A}_{\mathcal{S}_i,b_j}^*(\vec{f})(x)|g(x)|w(x)dx \right) \\
&=: \sum_{i=1}^{3^n}\sum_{j=1}^{m} \left(\sup _{\|g\|_{\X^{\prime}(w)} \leq 1}\mathcal{I}_{i,j}^{1}(g)+\sup _{\|g\|_{\X^{\prime}(w)} \leq 1} \mathcal{I}_{i,j}^{2}(g)  \right), \\
\end{aligned}
\end{equation}
where
\begin{equation*}
\begin{aligned}
 \mathcal{I}_{i,j}^{1}(g)=\int_{\mathbb{R}^n}\mathcal{A}_{\mathcal{S}_i,b_j}(\vec{f})(x)|g(x)|w(x)dx, \\ \mathcal{I}_{i,j}^{2}(g)=\int_{\mathbb{R}^n} \mathcal{A}_{\mathcal{S}_i,b_j}^*(\vec{f})(x)|g(x)|w(x)dx, \\
\end{aligned}
\end{equation*}
with $1\leq j\leq m, 1\leq i\leq 3^n.$\par
First, we consider the contribuion of $\mathcal{I}_{i,j}^{1}(g).$
Via the definition of $\mathcal{A}_{\mathcal{S}_i,b_j},$ the fact that $||b-b_Q||_{\exp L,Q}\lesssim ||b||_{\mathrm{BMO}}$ (see \cite[Lemma 4.7]{iba}) and Lemma \ref{lem1.3}, the generalized H\"{o}lder's inequality tells us that for any $s>1,$
\begin{equation}\label{eq10}
\begin{aligned}
\mathcal{I}_{i,j}^{1}(g)&=\sum_{Q\in \mathcal{S}_i} \int_Q \left|b_j(x)-b_{j,Q}\right||g(x)|w(x)\prod_{i=1}^m  {\langle \left|f_i\right|\rangle}_Qdx   \\
& \leq 2\sum_{Q\in \mathcal{S}_i} |Q| {||b_j-b_{j,Q}||}_{\exp L,Q} ||gw||_{L(\log L),Q}\prod_{i=1}^m\langle|f_i|\rangle_Q  \\
&\lesssim [w]_{A_\infty}\left\|b_{j}\right\|_{\mathrm{BMO}}\sum_{Q\in \mathcal{S}_i} |Q| \underset{x\in Q}{\inf} \left(M_w^{\mathcal{D}}(|g|^s)(x)\right)^{\frac{1}{s}}{\langle w\rangle}_Q\prod_{i=1}^m  \langle|f_i|\rangle_Q.
\end{aligned}
\end{equation}
Therefore, an elementary calculation, together with Theorem \ref{thm1.1}, yields that
 \begin{equation}\label{eq4}
\begin{aligned}
\mathcal{I}_{i,j}^{1}(g)&\lesssim [w]_{A_\infty}\left\|b_{j}\right\|_{\mathrm{BMO}} \int_{\mathbb{R}^n}\left(M_w^{\mathcal{D}}(|g|^s)\right)^{\frac{1}{s}}(x) \mathcal{A}_{\mathcal{S}_i}(\vec{f})(x)w(x)dx  \\
&\leq [w]_{A_\infty}\left\|b_{j}\right\|_{\mathrm{BMO}} \|\mathcal{A}_{\mathcal{S}_i}(\vec{f})\|_{\mathbb{X}(w)}
  \|\left(M_w^{\mathcal{D}}(|g|^s)\right)^{\frac{1}{s}}\|_{\mathbb{X}^{\prime}(w)}  \\
&\lesssim [w]_{A_\infty}^2\left\|b_{j}\right\|_{\mathrm{BMO}}\|
|g|^s\|^{\frac{1}{s}}_{{\mathbb{X}^{\prime}}^{\frac{1}{s}}(w)}\prod_{i=1}^m [w]^{\frac{1}{p_{\X_i}}}_{A_{p_{\X_i}}} {\left\|f_i\right\|}_{\X_i(w)},
\end{aligned}
\end{equation}
where in the last inequality, we have used $1<p_{\X}\leq q_{\X}<\infty$ and pick $1<s< p_{\X ^{\prime}}=\frac{q_{\X}}{q_{\X}-1}$ such that $p_{(\X^{\prime})^{1/s}}>1.$\par
Now we turn to the proof of $\mathcal{I}_{i,j}^{2}(g).$ For any $1<r<\infty,$ we apply H\"{o}lder's inequality, together with the fact that
$\langle\left|b-b_{Q}\right|^{s}\rangle_{Q}^{1 / s}\leq 2^{n+1}e^{3}  s\|b\|_{\mathrm{BMO}}$ (see \cite[p. 19]{wen}), to obtain
\begin{equation*}
		\begin{aligned}
			\mathcal{I}_{i,j}^{2}(g)&= \int_{\mathbb{R}^n} \mathcal{A}_{\mathcal{S}_i,b_j}^*(\vec{f})(x)|g(x)|w(x)dx\\
			&\leq\sum_{Q \in \mathcal{S}_{i}}\left\langle\left|f_j\left(b_j- b_{j,Q}\right)\right|\right\rangle_{Q} \int_{Q} |g(x)| w(x) d x\prod_{i \neq j}\left\langle\left|f_i\right|\right\rangle_Q\\
			&\leq \sum_{Q \in \mathcal{S}_{i}}\langle\left|b_j-b_{j,Q}\right|^{r^{\prime}}\rangle_{Q}^{1 /r^{\prime}} \int_{Q} |g(x)| w(x) d x\langle\left|f_j\right|^{r}\rangle_{Q}^{1/r}\prod_{i \neq j}\left\langle\left|f_i\right|^r\right\rangle_Q^{1/r}\\
			&\leq 2^{n+1}e^{3} r^{\prime} \|b_j\|_{\mathrm{BMO}} \sum_{Q \in \mathcal{S}_{i}}\frac{1}{w(Q)} \int_{Q} |g(x)| w(x) d x w(Q) \prod_{i=1}^m\left\langle\left|f_i\right|^r\right\rangle_Q^{\frac{1}{r}}.\\
		\end{aligned}
	\end{equation*}
Let $\mathcal{M}_r$ be the multilinear maximal operator with power $r$ defined by
$$\mathcal{M}_r(\vec{f})(x):=\sup\limits _{Q \ni x} \prod\limits_{i=1}^m \left(\frac{1}{|Q|} \int_Q\left|f_i(y)\right|^r d y\right)^{\frac{1}{r}}.$$
We note that each $\mathcal{S}_i$ is a $\frac{1}{2\cdot12^n}$-sparse family (see \cite[p. 1227]{cao1}) and for each $w\in A_{p_0/r}$,  $M_r$ is bounded on $ \X(w)$ where $ M_r(f)=M\left(|f|^r\right)^{1 / r}$ and $M$ is the Hardy-Littlewood maximal operator (see \cite[Lemma 3.1]{tan}). Therefore, by the Carleson embedding theorem, there exist constants $q_i>1 (i=1,\ldots,m)$ such that
\begin{equation}\label{eq5}
		\begin{aligned}
			\mathcal{I}_{i,j}^{2}(g)&\lesssim [w]_{A_{\infty}}\|b_j\|_{\mathrm{BMO}}\int_{\mathbb{R}^{n}} \mathcal{M}_r(\vec{f})(x) M_{w}^{\mathcal{D}} g(x) w(x) d x\\
			&\lesssim [w]_{A_{\infty}}\|b_j\|_{\mathrm{BMO}}\left\|\mathcal{M}_r(\vec{f})\right\|_{\X(w)} \left\|M_{w}^{\mathcal{D}} g\right\|_{\X ^{\prime}(w)}\\
			&\lesssim [w]_{A_{\infty}}\|b_j\|_{\mathrm{BMO}}\left\|P_m\left(M_r(f_1),\ldots,M_r(f_m) \right)\right\|_{\X(w)} \left\| g\right\|_{\X ^{\prime}(w)}\\
&\lesssim [w]_{A_{\infty}}\|b_j\|_{\mathrm{BMO}}\left\| g\right\|_{\X^{\prime}(w)}
\prod_{i=1}^m\left\|M_r(f_i) \right\|_{\X_i(w)} \\
&\lesssim [w]_{A_{\infty}}\|b_j\|_{\mathrm{BMO}}\left\| g\right\|_{\X^{\prime}(w)}
\prod_{i=1}^m[w]^{\frac{1}{q_ir}}_{A_{p_{\X_i}/{r}}}\| f_i\|_{{\X_i}(w)}.\\
		\end{aligned}
	\end{equation}\par
  Finally, taking the supremum over all $g\in \X^{\prime}(w)$ with $\|g\|_{\X^{\prime}(w)}\leq 1$ in (\ref{eq4}) and (\ref{eq5}), together with the estimate of (\ref{eq3}), it yields that

\begin{equation}
\begin{aligned}
{||{T}_{\sigma,\Sigma \mathbf{b}}(\vec{f})||}_{\X{(w)}}&\lesssim\sum_{i=1}^{3^n}\sum_{j=1}^{m} \left(\sup _{\|g\|_{\X^{\prime}(w)} \leq 1}\mathcal{I}_{i,j}^{1}(g)+\sup _{\|g\|_{\X^{\prime}(w)} \leq 1} \mathcal{I}_{i,j}^{2}(g)  \right)   \\
&\lesssim [w]_{A_\infty}\sum_{i}\sum_{j} \|b_j\|_{\mathrm{BMO}}\left(  \|\mathcal{A}_{\mathcal{S}_i}(\vec{f})\|_{\mathbb{X}(w)} +\prod_{i=1}^m\left\|M_r(f_i) \right\|_{\X_i(w)}\right)  \\
&\lesssim \| \vec{b}\|_{\mathrm{BMO}}[w]_{A_\infty} \left([w]_{A_\infty}  \prod\limits_{i=1}\limits^m [w]_{A_{p_{\mathbb{X}_i}}}^{\frac{1}{p_{\mathbb{X}_i}}}+ \prod\limits_{i=1}\limits^m [w]_{A_{p_{{\mathbb{X}}_i}/r}}^{\frac{1}{q_ir}}    \right) \prod\limits_{i=1}\limits^m \left\| f_i\right\|_{\mathbb{X}_i(w)}. \\
\end{aligned}
\end{equation}\par
This completes the proof of Theorem \ref{thm1.2}.
\end{proof}

\section{ Proofs of Theorems \ref{thm1.3} and \ref{thm1.4}}
\begin{proof}[Proof of Theorem $\ref{thm1.3}$]
Note that $\mathbb{Y}:=\mathbb{X}^{\frac{1}{p}}$ is a RIBFS since $\X$ is $p$-convex and $\X _i$ is $p_i$-convex with $0<p,p_i<1, i=1,\ldots, m$, respectively. We may assume that $\mathcal{S}$ is a $\eta$-sparse family. Thus, by Lemma \ref{lem1.1}, it holds that
	\begin{equation*}
		\begin{aligned}
			\left\|{T}_{\sigma}(\vec{f})\right\|_{\X(w)}^p &\simeq \sup_{{\|g\|}_{\Y'(w)}\leq 1}\int_{\mathbb{R}^{n}}\left|{T}_{\sigma}(\vec{f})(x)\right|^pg(x)w(x)dx\\
			&\lesssim  \sup_{{\|g\|}_{\Y'(w)}\leq 1}\int_{\mathbb{R}^{n}}\Big(\sum_{Q\in \mathcal{S}}\prod_{i=1}^m\langle|f_i|\rangle_{Q} \chi_Q(x)\Big)^pg(x)w(x)dx\\
			&\lesssim  \sup_{{\|g\|}_{\Y'(w)}\leq 1}\sum_{Q\in \mathcal{S}}\prod_{i=1}^m\langle|f_i|\rangle_{Q}^p \int_{Q}g(x)w(x)dx\\
&\leq \sup_{{\|g\|}_{\Y'(w)}\leq 1} \sum_{Q\in \mathcal{S}}\left(\frac{1}{w(Q)}\int_Q\Big(\mathcal{M}(\vec{f})(x)\Big)^{\frac{p}{2}}
(M_w^{\mathcal{D}}g(x))^{\frac{1}{2}}w(x)dx\right)^2w(Q).\\
		\end{aligned}
	\end{equation*}
 Combining the Carleson embedding theorem with the above estimates gives that
\begin{equation*}\begin{aligned}
			\left\|{T}_{\sigma}(\vec{f})\right\|_{\X(w)}&\lesssim
			[w]_{A_\infty}^{\frac{1}{p}}\sup_{{\|g\|}_{\Y'(w)}\leq 1} \left(\int_{\mathbb{R}^{n}}(\mathcal{M}(\vec{f})(x))^p M_w^{\mathcal{D}}g(x)·w(x)dx\right)^{\frac{1}{p}}\\
			&\leq[w]_{A_\infty}^{\frac{1}{p}}\left\|(\mathcal{M}(\vec{f}))^p\right\|_{\Y(w)}^{\frac{1}{p}} \sup_{{\|g\|}_{\Y'(w)}\leq 1}\|M_w^{\mathcal{D}}g\|_{\Y'(w)}^{\frac{1}{p}}.\\
		\end{aligned}
	\end{equation*}
Observe that for $0<p<1<p_{\X}\leq q_{\mathbb{X}}<\infty,$
	$$p_{\mathbb{Y}^{\prime}}=\left(q_{\mathbb{Y}}\right)^{\prime}=\frac{q_{\mathbb{Y}}}{q_{\mathbb{Y}}-1}=
	\frac{q_{\mathbb{X}}}{q_{\mathbb{X}}-p}>1,$$
which indicates that $\left\|M_{w}^{\mathcal{D}}g\right\|_{\mathbb{Y}^{\prime}(w)} \lesssim\|g\|_{\mathbb{Y}^{\prime}(w)}$.\par
This fact, together with Lemma \ref{lem1.2}, further implies that	
\begin{equation*}\begin{aligned}
			\left\|{T}_{\sigma}(\vec{f})\right\|_{\X(w)}&\lesssim
			[w]_{A_\infty}^{\frac{1}{p}}\left\|\mathcal{M}(\vec{f})\right\|_{\X(w)}\\
			&\leq[w]_{A_\infty}^{\frac{1}{p}}\left\|P_m\left(Mf_1,\ldots,Mf_m \right)\right\|_{\X(w)}\\
&\lesssim [w]_{A_\infty}^{\frac{1}{p}} \prod _{i=1}^m[w]_{A_{p_{\mathbb{X}_i}}}^{\frac{1}{p_{\mathbb{X}_i}}} \left\| f_i\right\|_{\mathbb{X}_i(w)},\\
		\end{aligned}
	\end{equation*}
which finishes the proof of Theorem \ref{thm1.3}.
\end{proof}
\begin{remark}
In \cite[Theorem 6.10]{cur}, in order to ensure the extrapolation theorem holds for multilinear Calder\'{o}n-Zygmund operators, one need to assume that RIQBFS $\X$ and $\X_i$ with $i=1,\ldots,m$ enjoy the same $p$-convex property. However, in Theorem \ref{thm1.3}, we showed some weaker conditions for the extrapolation theorem such that each RIQBFS $\X_i$ may enjoys different $p_i$-convex property.
\end{remark}
\begin{proof}[Proof of Theorem $\ref{thm1.4}$]
We first note that $\mathbb{Y}=\mathbb{X}^{\frac{1}{p}}$ is a RIBFS since $\X$ is $p$-convex. By Lemma \ref{lem1.1}, it holds that
	\begin{equation}\label{eq8}
		\begin{aligned}
			{\left\|{T}_{\sigma,\Sigma \mathbf{b}}(\vec{f})\right\|}_{\X{(w)}} &\simeq \sup_{{\|h\|}_{\Y'(w)}\leq 1}\left(\int_{\mathbb{R}^{n}}\left| {T}_{\sigma,\Sigma \mathbf{b}}(\vec{f})(x)\right|^ph(x)w(x)dx\right)^{\frac{1}{p}}\\
			&\lesssim  \sup_{{\|h\|}_{\Y'(w)}\leq 1}\left(\int_{\mathbb{R}^{n}}\sum_{i=1}^{3^n}\sum_{j=1}^{m}\Big(\mathcal{A}_{\mathcal{S}_{i, b_j}}(\vec{f})(x)+\mathcal{A}_{\mathcal{S}_{i, b_j}}^{*}(\vec{f})(x)\Big)^ph(x)w(x)dx\right)^{\frac{1}{p}}\\
			&\lesssim \sup_{{\|h\|}_{\Y'(w)}\leq 1}\sum_{i=1}^{3^n}\sum_{j=1}^{m}\left(\mathcal{H}_{i,j}^1+\mathcal{H}_{i,j}^2\right)^{\frac{1}{p}},
		\end{aligned}
	\end{equation}
	where $\mathcal{H}_{i,j}^1:=\int_{\mathbb{R}^{n}}(\mathcal{A}_{\mathcal{S}_{i, b_j}}(\vec{f})(x))^ph(x)w(x)dx,$ and $\mathcal{H}_{i,j}^2:=\int_{\mathbb{R}^{n}}(\mathcal{A}_{\mathcal{S}_{i, b_j}}^*(\vec{f})(x))^ph(x)w(x)dx.$\par

Take and fix $0\leq h \in \mathbb{Y}^{\prime}(w)$ with $\|h\|_{\mathbb{Y}^{\prime}(w)} \leq 1.$ For each $1\leq i\leq 3^n, 1\leq j\leq m $ and $0<p\leq1,$ a trivial calculation shows that
\begin{equation}\label{eq6}
		\begin{aligned}
			\mathcal{H}_{i,j}^1
			&\leq\sum_{Q \in \mathcal{S}_{i}} \frac{1}{w(Q)} \int_{Q}\left|b_j(x)-b_{j,Q}\right|^p |h(x)| w(x) d x w(Q)\prod_{i=1}^m\langle|f_i|\rangle_{Q}^p\\
			&\leq 2\sum_{Q \in \mathcal{S}_{i}}\left\|\left|b_j-b_{j,Q}\right|^{p}\right\|_{\exp L^{\frac{1}{p}}(w), Q} \|h\|_{L(\log L)^p(w), Q}w(Q)\prod_{i=1}^m\langle|f_i|\rangle_{Q}^p\\
			&\lesssim[w]_{A_{\infty}}^{p}\|b_j\|_{\mathrm{BMO}}^{p}\sum_{Q \in \mathcal{S}_{i}} \|h\|_{L(\log L)^p(w), Q}w(Q)\prod_{i=1}^m\langle|f_i|\rangle_{Q}^p,\\
		\end{aligned}
	\end{equation}
where in the above inequality we have used the fact
 $\left\|\left|b-b_{Q}\right|^{p}\right\|_{\exp L^{\frac{1}{p}}(w), Q} \lesssim [w]_{A_{\infty}}^{p}\|b\|_{\mathrm{BMO}}^{p}$ (\cite[Lemma 4.7]{iba}) where
 $$
\|f\|_{\exp L^{\frac{1}{p}}(w), Q}=\inf \left\{\lambda>0: \frac{1}{w(Q)} \int_{Q} \exp \left(\frac{|f(x)|}{\lambda}\right)^{\frac{1}{p}}-1 d w<1\right\}.
$$

For any $1<\alpha<\infty$, we define $M_{\alpha(w)}^{\mathcal{D}} f$ by
$$M_{\alpha(w)}^{\mathcal{D}} f(x):=\sup _{x \in R, R \in \mathcal{D}} \left(\frac{1}{w(R)} \int_{R}|f(y)|^{\alpha} w(y) d y\right)^{\frac{1}{\alpha}}.$$
Then inequality (\ref{eq6}) implies that
\begin{equation*}
		\begin{aligned}
			\mathcal{H}_{i,j}^1
			&\lesssim [w]_{A_{\infty}}^{p}\|b_j\|_{\mathrm{BMO}}^{p}\sum_{B \in \mathcal{B}}\|h\|_{L(\log L)^p(w), B}\prod_{i=1}^m\langle|f_i|\rangle_{B}^p \sum_{Q \in S_{i}, \pi(Q)=B} w(Q)\\
			&\lesssim [w]_{A_{\infty}}^{p+1} \|b_j\|_{\mathrm{BMO}}^{p}\sum_{B \in \mathcal{B}}\|h\|_{L(\log L)^p(w), B}w(B)\prod_{i=1}^m\langle|f_i|\rangle_{B}^p \\
			&\lesssim [w]_{A_{\infty}}^{p+1} \|b_j\|_{\mathrm{BMO}}^{p} \int_{\mathbb{R}^{n}} \left(\mathcal{M} (\vec{f})(x)\right)^p M_{L (\log L)^p(w)}^{\mathcal{D}} h(x) w(x) d x\\
			&\lesssim [w]_{A_{\infty}}^{p+1} \|b_j\|_{\mathrm{BMO}}^{p} \int_{\mathbb{R}^{n}} \left(\mathcal{M} (\vec{f})(x)\right)^p M_{\alpha(w)}^{\mathcal{D}} h(x) w(x) d x,\\
		\end{aligned}
	\end{equation*}
	where $\pi(R)$ is the minimal principal cube which contains $R$ and
	$$\mathcal{B}=\cup_{k=0}^{\infty} \mathcal{B}_{k}$$
is the family of the principal cubes (\cite[P.2542]{li1})
	with $\mathcal{B}_{0}:=\{$ maximal cubes in $\mathcal{S}_j\}$ and
	$$\mathcal{B}_{k+1}:=\underset{B\in \mathcal{B}_{k}}{\cup} \operatorname{ch}_{\mathcal{B}}(B), \quad \operatorname{ch}_{\mathcal{B}}(B)=\{R \subsetneq B \text { maximal s.t. } \tau(R)>2 \tau(B)\},$$
	where $\tau(R)=\|h\|_{L(\log L)^p(w), R}\prod_{i=1}^m\langle|f_i|\rangle_{R}^p$. \par
Combining this with the generalized H\"{o}lder's inequality, we deduce that
\begin{equation*}
		\begin{aligned}
			\mathcal{H}_{i,j}^1 &\lesssim[w]_{A_{\infty}}^{p+1}\|b_j\|_{\mathrm{BMO}}^{p}\left\|\mathcal{M} (\vec{f})\right\|_{\X(w)}^p \left\|M_{w}^{\mathcal{D}}h^\alpha\right\|_{(\Y ^{\prime})^{1/\alpha}(w)}^{\frac{1}{\alpha}}.\\
		\end{aligned}
	\end{equation*}
	In order to use \cite[Lemma 3.4]{and}, we may take an $\alpha$ with $1<\alpha< (\frac{q_{\X}}{p})^{\prime}$, in this case $p_{(\Y ^{\prime})^{1/\alpha}}=\frac{p_{\Y ^{\prime}}}{\alpha}>1$.\par
Then Lemma \ref{lem1.2} yields that
\begin{equation}\label{eq7}
		\begin{aligned}
			\mathcal{H}_{i,j}^1 &\lesssim[w]_{A_{\infty}}^{p+1}\|b_j\|_{\mathrm{BMO}}^{p}\left\|h\right\|_{\Y ^{\prime}(w)}\prod_{i=1}^m[w]_{A_{p_{\X_i}}}^{\frac{p}{p_{\X_i}}}\|f_i\|_{\X_i(w)}^{p} \\
			&\leq [w]_{A_{\infty}}^{p+1}\|b_j\|_{\mathrm{BMO}}^{p}\prod_{i=1}^m[w]_{A_{p_{\X_i}}}^{\frac{p}{p_{\X_i}}}\|f_i\|_{\X_i(w)}^{p}.\\
		\end{aligned}
	\end{equation}

Now we consider $\mathcal{H}_{i,j}^2.$ Note that for $1<r<\infty,$ by the definition of $\mathcal{H}_{i,j}^2,$ we have

\begin{equation*}
		\begin{aligned}
\mathcal{H}_{i,j}^2
&\leq\sum_{Q \in \mathcal{S}_{i}}\langle\left|f_j\left(b_j-b_{j,Q}\right)\right|\rangle_{Q}^p\left(w(Q)\prod_{i\neq j}\langle |f_i| \rangle_Q^p\right) \frac{1}{w(Q)} \int_{Q} h(x) w(x) d x\\
&\leq\sum_{Q \in \mathcal{S}_{i}}w(Q)\langle\left|b_j-b_{j,Q}\right|^{r^{\prime}}\rangle_{Q}^{p /r^{\prime}} \frac{1}{w(Q)}\int_{Q} h(x) w(x) d x\langle\left|f_j\right|^{r}\rangle_{Q}^{p/r}\prod_{i \neq j}\left\langle\left|f_i\right|^r\right\rangle_Q^{p/r}\\
			&\lesssim \|b_j\|_{\mathrm{BMO}}^p \sum_{Q \in \mathcal{S}_{i}}\frac{1}{w(Q)} \int_{Q} h(x) w(x) d x w(Q) \prod_{i=1}^m\left\langle\left|f_i\right|^r\right\rangle_Q^{\frac{p}{r}}.\\
		\end{aligned}
	\end{equation*}

Consequently, applying the Carleson embedding theorem and the generalized H\"{o}lder's inequality, it gives that
\begin{equation*}
		\begin{aligned}
			\mathcal{H}_{i,j}^2&\lesssim [w]_{A_{\infty}}\|b_j\|_{\mathrm{BMO}}^p\int_{\mathbb{R}^{n}} \left(\mathcal{M}_r(\vec{f})(x)\right)^p M_{w}^{\mathcal{D}} h(x) w(x) d x\\
			&\leq [w]_{A_{\infty}}\|b_j\|_{\mathrm{BMO}}^p\left\|(\mathcal{M}_r(\vec{f}))^p\right\|_{\Y(w)} \left\|M_{w}^{\mathcal{D}} h\right\|_{\Y ^{\prime}(w)}\\
&\lesssim [w]_{A_{\infty}}\|b_j\|_{\mathrm{BMO}}^p\left\| h\right\|_{\Y^{\prime}(w)}
\prod_{i=1}^m[w]^{\frac{p}{q_ir}}_{A_{p_{\X_i}/{r}}}\| f_i\|_{{\X_i}(w)}^p.\\
		\end{aligned}
	\end{equation*}
This inequality, together with (\ref{eq7}) and (\ref{eq8}), indicates that
\begin{equation*}
\left\| T_{\sigma,\Sigma \mathbf{b}}(\vec{f})\right\|_{\mathbb{X}(w)} \lesssim  \| \vec{b}\|_{\mathrm{BMO}}[w]_{A_\infty}^{\frac{1}{p}} \left([w]_{A_\infty}^p \prod\limits_{i=1}\limits^m [w]_{A_{p_{\mathbb{X}_i}}}^{\frac{p}{p_{\mathbb{X}_i}}} + \prod\limits_{i=1}\limits^m [w]_{A_{p_{{\mathbb{X}}_i}/r}}^{\frac{p}{q_ir}} \right)^{\frac{1}{p}} \prod\limits_{i=1}\limits^m \left\| f_i\right\|_{\mathbb{X}_i(w)},
\end{equation*}
which completes the proof of Theorem \ref{thm1.4}.
\end{proof}

\section{ Proofs of Theorems \ref{thm1.5} and \ref{thm1.6}}
Before giving the proof, we need to introduce the definition  of $N$-function.
A Young function $\phi$ is called $N$-function if it satisfies the following
$$\lim _{t \rightarrow 0^{+}} \frac{\phi(t)}{t}=0 \quad \text { and } \quad \lim _{t \rightarrow \infty} \frac{\phi(t)}{t}=\infty.$$\par
Here, we list some main properties of $\phi \in \Phi$ and it's complementary function $\bar{\phi}$.
\begin{itemize}
	\item  (Young's inequality) $st \leq \phi(s)+\bar{\phi}(t), s,t \geq 0.$
	\item When $\phi$ is an $N$-function, then $\bar{\phi}$ is also an $N$-function, and satisfies the following inequalities:
	\begin{equation}\label{ie6.1}
		t \leq \phi^{-1}(t) \bar{\phi}^{-1}(t) \leq 2 t, t \geq 0;
	\end{equation}
	\begin{equation}\label{ie6.2}
		\bar{\phi}\left(\frac{\phi(t)}{t}\right) \leq \phi(t), t>0.
	\end{equation}
	\item If $\phi$ is an $N$-function, then there exists $0<\alpha<1$ such that $\phi^{\alpha}$ is quasi-convex if and only if $\bar{\phi} \in \Delta_{2},$ where $\phi^{\alpha}(t)=\phi(t)^{\alpha}.$
	\item  $\phi \in \Delta_{2}$  if and only if there exists some constant $C_1$ such that for any $\lambda \geq 2$,
	\begin{equation}\label{ie6.3}
		\phi(\lambda t) \leq 2^{C_1} \lambda^{C_1} \phi(t), t>0.
	\end{equation}
\end{itemize}\par

In addition, in order to prove Theorem \ref{thm1.5}-\ref{thm1.6}, we need the multilinear weighted dyadic Hardy-Littlewood maximal operator $\mathcal{M}_{w}^{\mathcal{D}},$ given by
 $$\mathcal{M}_{w}^{\mathcal{D}}(\vec{f})(x):=\sup _{x \in R, R \in \mathcal{D}} \prod_{i=1}^m\frac{1}{w(R)} \int_{R}|f_i(y)| w(y) d y, $$
	where $w \in A_{\infty}$ and $\mathcal{D}$ is the given dyadic grid.\par
The following estimate of modular inequality for multilinear weighted dyadic maximal operator will play a crucial role in our proof, which has some interests of its own.
\begin{lemma}\label{lem4.1}
	Let $\phi \in \Phi$ with sub-multiplicative property. If there exists $0<\alpha<1$ for which $\phi^{\alpha}$ is a quasi-convex function. Then, there exists some constant $a_{2}^{\prime}>1,$ such that for each $w \in A_{\infty},$
	\begin{equation}
		\int_{\mathbb{R}^{n}} \phi\left(\mathcal{M}_{w}^{\mathcal{D}}(\vec{f})(x)\right) w(x) d x \leq a_{2}^{\prime} \left(\prod_{i=1}^m\int_{\mathbb{R}^{n}} \phi^m\left(a_{2}^{\prime}|f_i(x)|\right) w(x) d x\right)^\frac{1}{m},
	\end{equation}
	where $\mathcal{D}$ is the given dyadic grid.
\end{lemma}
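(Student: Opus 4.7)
The plan is to first reduce the multilinear maximal to the linear one. From the very definition of $\mathcal{M}_w^{\mathcal{D}}$, for each $x$ and each dyadic cube $R\ni x$, the average inside the supremum factors as a product, so that
$$
\mathcal{M}_w^{\mathcal{D}}(\vec{f})(x) \;\le\; \prod_{i=1}^m \sup_{x\in R\in\mathcal{D}}\frac{1}{w(R)}\int_R |f_i|\,w \;=\; \prod_{i=1}^m M_w^{\mathcal{D}} f_i(x),
$$
where $M_w^{\mathcal{D}}$ is the one-function weighted dyadic Hardy--Littlewood maximal operator.

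\textbf{Step 2 (sub-multiplicativity and H\"older).} Since $\phi$ is increasing and sub-multiplicative, iterating $\phi(st)\le\phi(s)\phi(t)$ gives the pointwise bound $\phi(\mathcal{M}_w^{\mathcal{D}}(\vec{f})(x))\le\prod_{i=1}^m\phi(M_w^{\mathcal{D}} f_i(x))$. Integrating against $w\,dx$ and applying H\"older's inequality with exponents $(m,\dots,m)$ yields
$$
\int_{\mathbb{R}^n}\phi(\mathcal{M}_w^{\mathcal{D}}(\vec{f}))\,w \;\le\; \prod_{i=1}^m\left(\int_{\mathbb{R}^n}\phi^m(M_w^{\mathcal{D}} f_i)\,w\right)^{1/m}.
$$
This already has the product form we want; it remains to dominate each factor by $\int\phi^m(a_2'|f_i|)\,w$.

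\textbf{Step 3 (one-function modular inequality for $M_w^{\mathcal{D}}$).} The core of the proof is therefore to establish, for a single function $f$,
$$
\int_{\mathbb{R}^n}\phi^m(M_w^{\mathcal{D}} f)\,w \;\le\; a_2'\int_{\mathbb{R}^n}\phi^m(a_2'|f|)\,w.
$$
Here $\phi^m$ inherits sub-multiplicativity from $\phi$, and $(\phi^m)^{\alpha/m}=\phi^{\alpha}$ is quasi-convex by hypothesis; moreover $\phi^m\in\Delta_2$ via \eqref{ie6.3}, since the $\Delta_2$-constant only enlarges by a factor of $m$. I would then invoke the weak-type $(1,1)$ inequality of $M_w^{\mathcal{D}}$ with respect to the measure $w\,dx$, combined with a Calder\'on--Zygmund decomposition at level $\lambda$ (relative to $w$), to obtain
$$
w\bigl(\{x:M_w^{\mathcal{D}} f(x)>\lambda\}\bigr) \;\le\; \frac{C}{\lambda}\int_{\{|f|>\lambda/2\}}|f|\,w.
$$
Multiplying by $\phi^m$ at the corresponding level, integrating in $\lambda$ by layer cake, and using the $\Delta_2$ property $\phi^m(2t)\le 2^{mC_1}\phi^m(t)$ of \eqref{ie6.3} to absorb the factor $|f|/\lambda$ into $\phi^m(a_2'|f|)$, yields the target bound.

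\textbf{Where I expect the difficulty.} Steps 1 and 2 are straightforward bookkeeping. The hard part is Step 3, where $\phi^m$ is only quasi-convex rather than convex, so the clean Young-function machinery does not apply directly. The remedy is to pass through the quasi-convex majorant $\widetilde{\phi^{\alpha}}$ supplied by the definition of quasi-convexity, work with its convexified version to justify the weak-type-to-modular argument, and then absorb the loss of multiplicative constants into the final parameter $a_2'>1$. Combining Step 3 with Step 2 then produces the lemma, with a constant depending only on $\phi$, $m$, $n$, and $[w]_{A_\infty}$ (in fact $a_2'$ can be taken independent of $w$, since $M_w^{\mathcal{D}}$ is of weak type $(1,1)$ with constant $1$ with respect to $w\,dx$).
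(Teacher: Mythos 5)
Your proof is correct and reaches the same conclusion, but by a genuinely different route from the paper. The paper's argument does not factor $\mathcal{M}_w^{\mathcal{D}}$ into a product of one-function maximal operators at the modular level. Instead it applies Jensen's inequality to the convex minorant $\psi$ of $\phi^{\alpha}$ inside each multilinear weighted average over $Q$, combined with sub-multiplicativity, to obtain the pointwise bound $\phi\bigl(\mathcal{M}_w^{\mathcal{D}}(\vec f)(x)\bigr)\le\bigl(\mathcal{M}_w^{\mathcal{D}}(\vec f_\phi)(x)\bigr)^{1/\alpha}$ with $\vec f_\phi=(\phi^{\alpha}(a_1^{2/m}|f_1|),\ldots,\phi^{\alpha}(a_1^{2/m}|f_m|))$; it then integrates and uses the $L^{m/\alpha}(w)\times\cdots\times L^{m/\alpha}(w)\to L^{1/\alpha}(w)$ boundedness of $\mathcal{M}_w^{\mathcal{D}}$ (which, since $m/\alpha>1$, reduces to the Doob maximal inequality via the same pointwise factorization plus H\"older). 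In contrast, you factor first, push $\phi$ through the product by sub-multiplicativity, apply H\"older with exponents $(m,\ldots,m)$, and then have to establish a genuine single-function modular inequality $\int\phi^m(M_w^{\mathcal{D}}f)\,w\le a_2'\int\phi^m(a_2'|f|)\,w$ from scratch by the weak-type-to-modular argument. That inequality does hold: $M_w^{\mathcal{D}}$ is of weak type $(1,1)$ on $(\mathbb{R}^n,w\,dx)$ with constant $1$, and $(\phi^m)^{\alpha/m}=\phi^{\alpha}$ quasi-convex gives $i_{\phi^m}=m\,i_\phi>1$ (equivalently $\overline{\phi^m}\in\Delta_2$), which is exactly the $\nabla_2$-type condition needed to make the layer-cake computation $\int_0^{2t}\lambda^{-1}\,d\phi^m(\lambda)\lesssim \phi^m(2t)/t$ close. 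So your Step 3 is a rediscovery of a classical modular maximal inequality (a precursor to the paper's Remark \ref{rem1}); it works, but it is the analytically heavy part and your sketch leaves the $\nabla_2$-integral estimate implicit. The paper's Jensen-first route is cleaner precisely because raising to the power $1/\alpha$ lands in $L^{1/\alpha}(w)$ with $1/\alpha>1$, where strong-type boundedness is immediate and no weak-type-to-modular passage is needed. Both routes give a constant $a_2'$ independent of $w$.
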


\begin{proof}
Since $\phi ^\alpha$ is a quasi-convex function, then there exists convex function $\psi$ such that
$$
\psi(t) \leq \phi^\alpha(t) \leq a_1 \psi\left(a_1 t\right), \quad t \geq 0, a_1 \geq 1.
$$
Thus for each dyadic cube  $Q\in \mathcal{D},$ using Jensen's inequality, convexity of $\psi$ and for any $\lambda \geq 1, t \geq 0, \lambda \psi(t) \leq \psi(\lambda t),$ we have
\begin{equation}\label{eq9}
\begin{aligned}
\phi^\alpha\left(\prod_{i=1}^m\frac{1}{w(Q)} \int_{Q}|f_i(y)| w(y) d y\right)
&\leq a_1 \psi\left(a_1\prod_{i=1}^m\frac{1}{w(Q)} \int_{Q}|f_i(y)| w(y) d y\right) \\
&\leq a_1 \left(\frac{1}{(w(Q))^m} \int_{Q^m}\psi (a_1\prod_{i=1}^m|f_i(y_i)|) \prod_{i=1}^m w(y_i) d y_i\right)\\
&\leq\prod_{i=1}^m\frac{1}{w(Q)} \int_{Q}\phi^\alpha (a_1^{\frac{2}{m}}|f_i(y)|) w(y) d y.
\end{aligned}
\end{equation}

Denote the vector of $\phi$ by
$\vec{f}_\phi=\left(\phi^\alpha (a_1^{\frac{2}{m}}|f_1|),\phi^\alpha (a_1^{\frac{2}{m}}|f_2|),\ldots, \phi^\alpha (a_1^{\frac{2}{m}}|f_m|)\right).$ Using this notation and (\ref{eq9}), one may obtain

$$
\phi\left(\mathcal{M}_w^{\mathcal{D}}(\vec{f})(x)\right)=\left(\phi^\alpha(\mathcal{M}_w^{\mathcal{D}}(\vec{f})(x))\right)
^{\frac{1}{\alpha}} \leq \left(\mathcal{M}_w^{\mathcal{D}}(\vec{f}_\phi)(x)\right)^{\frac{1}{\alpha}}.
$$
Therefore,
\begin{equation}\label{ie5.11}
\begin{aligned}
\int_{\mathbb{R}^n} \phi\left(\mathcal{M}_w^{\mathcal{D}}(\vec{f})(x)\right) w(x) d x &\leq \int_{\mathbb{R}^n} \left(\mathcal{M}_w^{\mathcal{D}}(\vec{f}_\phi)(x)\right)^{\frac{1}{\alpha}}w(x) d x\\
&\leq C\prod_{i=1}^m\left(\int_{\mathbb{R}^n} \phi^\alpha(a_1^{\frac{2}{m}}|f_i(x)|)^{\frac{m}{\alpha}} w(x)d x\right)^{\frac{1}{m}}\\
&\leq a_{2}^{\prime}\prod_{i=1}^m\left(\int_{\mathbb{R}^n} \phi^\alpha(a_{2}^{\prime}|f_i(x)|)^{\frac{m}{\alpha}} w(x)d x\right)^{\frac{1}{m}},
\end{aligned}
\end{equation}
where we have used the fact that $\mathcal{M}_{w}^{\mathcal{D}}(\vec{f})$ satisfy $L^{\frac{m}{\alpha}}(w) \times\cdots \times L^{\frac{m}{\alpha}}(w) \rightarrow L^{\frac{1}{\alpha}}(w)$ boundedly with  $0<\alpha<1\leq m,$ which derived directly from inequality $\mathcal{M}_{w}^{\mathcal{D}}(\vec{f})(x)\leq \prod _{i=1}^mM_{w}^{\mathcal{D}}(f_i)(x),$ the boundedness of $M_{w}^{\mathcal{D}}$ and the H\"{o}lder's inequality.
\end{proof}
\begin{remark}\label{rem1}
From the proof, it is clear that Lemma \ref{lem4.1} holds with $m=1$ without assume the sub-multiplication condition of $\phi$.
\end{remark}\par
We also need the following version of the modular inequalities for the multilinear maximal operator.
\begin{lemma}\label{lem4.2}
	Let $\phi \in \Phi$ be a quasi-convex function with sub-multiplicative property. For each~$1\leq r < \infty,$ if $r<i_{\phi}<\infty$ then there exists constant $a_3$ such that for
every~$1<q<\frac{i_{\phi}}{r}$ and ~$w\in A_q ,$
	\begin{equation}
		\int_{\mathbb{R}^n} \phi\left(\mathcal{M}_r(\vec{f})(x)\right) w(x) d x \leq a_3\left(\prod_{i=1}^m\int_{\mathbb{R}^n} \phi^m\left(a_3[w]_{A_q}^{\frac{1}{qr}}\left|f_i(x)\right|\right) w(x) d x\right)^{\frac{1}{m}}.
	\end{equation}
\end{lemma}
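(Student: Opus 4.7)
The plan is to reduce the multilinear modular inequality to a product of linear ones for the unweighted maximal operator $M_r$, and then to use the $A_q$ condition to pass to a weighted dyadic maximal operator, to which the one-variable version of Lemma \ref{lem4.1} (the $m=1$ case noted in Remark \ref{rem1}) can be applied. From the definition one has the pointwise bound
$$\mathcal{M}_r(\vec f)(x)=\sup_{Q\ni x}\prod_{i=1}^m\Bigl(\dfrac{1}{|Q|}\int_Q|f_i|^r\Bigr)^{\!1/r}\le \prod_{i=1}^m M_r(f_i)(x).$$
Iterating the sub-multiplicativity of $\phi$ gives $\phi(\mathcal{M}_r(\vec f))\le C^{m-1}\prod_i\phi(M_r(f_i))$. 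Hölder's inequality with $m$ copies of the exponent $m$ (taken with respect to the measure $w\,dx$) then yields
$$\int_{\Rn}\phi(\mathcal{M}_r(\vec f))w\le C^{m-1}\prod_{i=1}^m\Bigl(\int_{\Rn}\phi^m(M_r(f_i))w\Bigr)^{1/m},$$
so the lemma is reduced to the one-variable modular bound
$$\int_{\Rn}\phi^m(M_rf)\,w\le C\int_{\Rn}\phi^m\!\left(c\,[w]_{A_q}^{1/(qr)}|f|\right)w. \qquad (\ast)$$

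To prove $(\ast)$ I would linearize via the $A_q$ condition. The standard Hölder-type estimate for $A_q$ weights (obtained by inserting $w^{1/q}w^{-1/q}$ and using the definition of $[w]_{A_q}$) gives, for every cube $Q$,
$$\Bigl(\dfrac{1}{|Q|}\int_Q|f|^r\Bigr)^{\!1/r}\le [w]_{A_q}^{1/(qr)}\Bigl(\dfrac{1}{w(Q)}\int_Q|f|^{qr}w\Bigr)^{\!1/(qr)},$$
and taking the supremum over $Q\ni x$ produces $M_rf(x)\le [w]_{A_q}^{1/(qr)}\bigl(M_w(|f|^{qr})(x)\bigr)^{1/(qr)}$. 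Set $h(t):=\phi^m(t^{1/(qr)})$ and absorb the constant by letting $g:=[w]_{A_q}|f|^{qr}$, so that $\phi^m(M_rf)\le h(M_w g)$. The function $h$ inherits sub-multiplicativity from $\phi$, and its lower dilation index equals $i_h=mi_\phi/(qr)>1$ thanks to the hypothesis $q<i_\phi/r$; hence $h^\alpha$ is quasi-convex for a suitable $\alpha<1$, and the single-function form of Lemma \ref{lem4.1} (Remark \ref{rem1}) applies after the usual three-lattice passage from $M_w$ to its dyadic analogue. It yields
$$\int_{\Rn}h(M_wg)\,w\le a'_2\int_{\Rn}h(a'_2g)\,w=a'_2\int_{\Rn}\phi^m\!\left((a'_2[w]_{A_q})^{1/(qr)}|f|\right)w,$$
which is exactly $(\ast)$ with $c=(a'_2)^{1/(qr)}$. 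Combining with the first paragraph proves the lemma after absorbing $C^{m-1}$, $a'_2$, and the $(qr)$-th root into a single constant $a_3$.

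The main obstacle is the convexity-bookkeeping for the auxiliary function $h(t)=\phi^m(t^{1/(qr)})$: namely, verifying that $h^\alpha$ is quasi-convex for some $\alpha<1$, which is exactly what forces the hypothesis $q<i_\phi/r$ because that is equivalent to $i_h>1$. Once this is in place, everything else reduces to tracking constants together with the already established pointwise inequalities and the linear case of Lemma \ref{lem4.1}.
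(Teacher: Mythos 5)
Your proof is correct and follows essentially the same strategy as the paper: dominate the unweighted maximal operator by the weighted dyadic one via the pointwise $A_q$--Hölder estimate, change variables to $\phi(t^{1/(qr)})$ so that the $\Delta_2$/quasi-convexity bookkeeping (which is exactly what forces $qr<i_\phi$) carries over, and then invoke Lemma \ref{lem4.1}. The only organizational difference is where you decouple the multilinear structure: you split $\mathcal{M}_r(\vec f)\le\prod_i M_r f_i$ up front and use sub-multiplicativity plus Hölder to reduce to $m=1$, then apply Remark \ref{rem1}; the paper keeps the product structure, dominates $\mathcal{M}_r$ by the multilinear $\mathcal{M}_w^{\mathcal D}$, and feeds $\phi_{qr}$ into the multilinear form of Lemma \ref{lem4.1}, whose own proof performs the identical decoupling ($\mathcal{M}_w^{\mathcal D}(\vec f)\le\prod_i M_w^{\mathcal D} f_i$ followed by Hölder). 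So the two routes are the same computation unwound in different orders, and they yield the same constant dependence. One small technical remark: rather than arguing through the dilation index of $h=\phi^m(t^{1/(qr)})$ directly (which implicitly requires $h$ to be an $N$-function for the equivalence $i_h>1\Leftrightarrow h^\alpha$ quasi-convex), the cleaner route is the one the paper takes, citing \cite[Lemma 5.2]{cur} to get $\phi_{qr}^\alpha$ quasi-convex and observing that $h^{\alpha/m}=\phi_{qr}^{\alpha}$ then does the job; your conclusion is the same, but you should make that step explicit rather than relying on the dilation-index heuristic.
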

\begin{proof}
Observe that for any dyadic cube $Q \in \mathcal{D}$, for each $x\in Q$, it holds that
\begin{equation*}\begin{aligned}
			\prod_{i=1}^m\frac{1}{|Q|} \int_{Q}|f_i|^{r}(y)dy
			&\leq\prod_{i=1}^m\left(\frac{1}{|Q|} \int_{Q}|f_i|^{qr}(y) w(y) dy\right)^{\frac{1}{q}}\left(\frac{1}{|Q|} \int_{Q} w^{-\frac{q'}{q}}(y)dy\right)^{\frac{1}{q'}} \\
			&\leq\prod_{i=1}^m\left[\left(\frac{1}{|Q|} \int_{Q} w(y)dy\right)\left(\frac{1}{|Q|} \int_{Q} w^{-\frac{1}{q-1}}(y)dy\right)^{q-1}\right]^{\frac{1}{q}}\\
			&\quad \times \left(\frac{1}{w(Q)} \int_{Q}|f_i|^{qr}(y) w(y)dy\right)^{\frac{1}{q}}\\
			&\leq [w]^{\frac{m}{q}}_{A_q}\left( \mathcal{M}_w^{\mathcal{D}}((\vec{f})^{qr})(x)\right)^{\frac{1}{q}},
	\end{aligned}\end{equation*}
where $(\vec{f})^s$ denotes $(f_1^s,\ldots,f_m^s).$\par
Therefore
$$\mathcal{M}_r(\vec{f})(x)\leq [w]^{\frac{m}{rq}}_{A_q}\left( M_w^{\mathcal{D}}((\vec{f})^{qr})(x)\right)^{\frac{1}{rq}}.$$
For $s,t>0$, set $\phi_t(s)=\phi(s^\frac{1}{t}).$ By the properties of $\phi$, we have
\begin{equation*}\begin{aligned}
	\phi(\mathcal{M}_r(\vec{f})(x))&\leq\phi\left([w]^{\frac{m}{rq}}_{A_q}\left( \mathcal{M}_w^{\mathcal{D}}((\vec{f})^{qr})(x)\right)^{\frac{1}{rq}}\right)\\
&=\phi_{qr}\left( \mathcal{M}_{w}^{\mathcal{D}}\left([w]_{A_{q}}(\vec{f})^{qr}\right)(x)\right).
\end{aligned}\end{equation*}

Since $1<rq<i_\phi$,  then by \cite[Lemma 5.2]{cur}, there exists a constant $0<\alpha<1$, such that $\phi_{qr}(s)^\alpha=\phi\left(s^{\frac{1}{qr}}\right)^\alpha$ is a quasi-convex function. This fact, together with Lemma \ref{lem4.1} further implies that
\begin{equation*}\begin{aligned}
			\int_{\mathbb{R}^{n}} \phi(\mathcal{M}_{r}( \vec{f})(x)) w(x) dx &\leq  \int_{\mathbb{R}^{n}} \phi_{qr}\left( M_{w}^{\mathcal{D}}\left([w]_{A_{q}}(\vec{f})^{qr}\right)(x)\right) w(x)dx \\
			&\leq a_2^{\prime}\left(\prod_{i=1}^m\int_{\mathbb{R}^{n}} \phi_{qr}^m(a_{2}^{\prime}[w]_{A_{q}}|f_i(x)|^{qr}) w(x)dx\right)^{\frac{1}{m}}\\
			&\leq a_3\left(\prod_{i=1}^m\int_{\mathbb{R}^n} \phi^m\left(a_3[w]_{A_q}^{\frac{1}{qr}}\left|f_i(x)\right|\right) w(x) d x\right)^{\frac{1}{m}}.\\
		\end{aligned}
	\end{equation*}
This finishes the proof of Lemma \ref{lem4.2}.
\end{proof}
For the sake of convenience, we will make a summary of the constants in the inequality which we need to use later.
\begin{equation}\label{ie5.1}
	constant~ a_1: \quad 	\psi(t)\leq \phi(t)\leq a_1\psi(a_1t), t>0.
	\end{equation}
\begin{equation}\label{ie5.2}
	constant~ a_2^{\prime}: \quad 	\int_{\mathbb{R}^{n}} \phi\left(M_{w}^{\mathcal{D}}(\vec{f})(x)\right) w(x) d x \leq a_{2}^{\prime} \left(\prod_{i=1}^m\int_{\mathbb{R}^{n}} \phi^m\left(a_{2}^{\prime}|f_i(x)|\right) w(x) d x\right)^\frac{1}{m}
	\end{equation}
\begin{equation}\label{ie5.3}
	constant~ C_1: \quad 	\phi(\lambda t) \leq 2^{C_1} \lambda^{C_1} \phi(t), \quad \lambda \geq 2, t>0.
	\end{equation}
The following lemma gives an estimate of the modular inequality for the sparse operator.
\begin{lemma}\label{lem4.3}
	Let $\phi$ be a $N$-function with sub-multiplicative property. Then there exists $0<\alpha <1,$ such that for any dyadic lattice $\mathcal{D},$ sparse family $\mathcal{S}\subseteq \mathcal{D}$ and $w\in A_{\infty},$
	\begin{equation*}
		\int_{\mathbb{R}^{n}} \phi\left( \mathcal{A}_{\mathcal{S}}(\vec{f})(x)\right) w(x) d x \lesssim [w]_{A_{\infty}}^{1+\alpha C_1} \int_{\mathbb{R}^{n}} \phi\left(\mathcal{M}(\vec{f})(x)\right) w(x) d x,
	\end{equation*}
	where $C_1$ is the same constant defined in $(\ref{ie5.3})$.
\end{lemma}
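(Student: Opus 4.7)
The plan is to combine the principal cubes (stopping-time) decomposition with the $A_\infty$-Carleson estimate already used in the proof of Theorem \ref{thm1.1}, paying for chain contributions via the $\Delta_2$-growth \eqref{ie5.3}. Set $\tau(Q):=\prod_{i=1}^{m}\langle|f_i|\rangle_Q$ and build a principal family $\mathcal{B}\subseteq\mathcal{S}$ recursively: let $\mathcal{B}_0$ be the maximal cubes in $\mathcal{S}$ and
\begin{equation*}
\mathcal{B}_{k+1}:=\bigcup_{B\in\mathcal{B}_k}\{R\subsetneq B\text{ maximal in }\mathcal{S}:\tau(R)>2\tau(B)\},\qquad \mathcal{B}=\bigcup_{k\ge 0}\mathcal{B}_k.
\end{equation*}
Assign to each $Q\in\mathcal{S}$ its principal ancestor $\pi(Q)\in\mathcal{B}$. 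By construction, $\tau(Q)\le 2\tau(\pi(Q))$ for every $Q\in\mathcal{S}$ and $\tau(B)\le \inf_{x\in B}\mathcal{M}(\vec{f})(x)$ for every $B\in\mathcal{B}$.

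Next, I would regroup the sparse sum according to principal ancestors:
\begin{equation*}
\mathcal{A}_{\mathcal{S}}(\vec{f})(x)=\sum_{B\in\mathcal{B}}\sum_{Q\in\mathcal{S},\,\pi(Q)=B}\tau(Q)\chi_Q(x)\le 2\sum_{B\in\mathcal{B}}\tau(B)\Bigl(\sum_{Q:\pi(Q)=B}\chi_Q(x)\Bigr).
\end{equation*}
Since $\phi$ is an $N$-function with $\bar\phi\in\Delta_2$, there exists $0<\alpha<1$ for which $\phi^{\alpha}$ is quasi-convex. Using this quasi-convexity and the sub-multiplicativity of $\phi$ one distributes $\phi$ through the outer sum; the inner chain-counting factor $\sum_{Q:\pi(Q)=B}\chi_Q$, which sparsity controls only in an averaged (not pointwise) sense, is then absorbed by an application of the $\Delta_2$-bound \eqref{ie5.3}. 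The latter produces a multiplicative loss of the form $[w]_{A_\infty}^{\alpha C_1}$, and the upshot of this step is
\begin{equation*}
\int_{\mathbb{R}^n}\phi\bigl(\mathcal{A}_{\mathcal{S}}(\vec{f})\bigr)w\,dx \lesssim [w]_{A_\infty}^{\alpha C_1}\sum_{B\in\mathcal{B}}\phi(\tau(B))\,w(B).
\end{equation*}

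Finally, the $A_\infty$-Carleson identity derived in \eqref{ie3.11}, namely $\sum_{B\in\mathcal{B},\,B\subseteq R}w(B)\le \eta^{-1}[w]_{A_\infty}w(R)$, is a Carleson condition with constant of order $[w]_{A_\infty}$. Combining the Carleson embedding theorem with the elementary bound $\tau(B)\le \inf_B\mathcal{M}(\vec{f})$ gives
\begin{equation*}
\sum_{B\in\mathcal{B}}\phi(\tau(B))\,w(B)\lesssim [w]_{A_\infty}\int_{\mathbb{R}^n}\phi\bigl(\mathcal{M}(\vec{f})(x)\bigr)w(x)\,dx,
\end{equation*}
which, chained with the previous display, yields precisely the factor $[w]_{A_\infty}^{1+\alpha C_1}$ claimed in the lemma.

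The delicate step is the middle one: the chain-counting multiplicity $\sum_{Q:\pi(Q)=B}\chi_Q$ is \emph{not} uniformly bounded pointwise, so the required bound has to be extracted modularly. This is where the interplay between the quasi-convexity of $\phi^{\alpha}$ and the $\Delta_2$-growth \eqref{ie5.3} is essential: together they allow one to dominate $\phi$ of a chain sum by a geometric series whose cost is exactly a single factor of $[w]_{A_\infty}^{\alpha C_1}$. This is the main technical obstacle, and making it rigorous--while ensuring that the geometric summation of chain terms converges with a constant independent of the depth of the chain--will be the crux of the argument.
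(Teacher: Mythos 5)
Your proposal takes a genuinely different route from the paper, but it has an unresolved gap that you yourself flag, and it also invokes a property that the lemma's hypotheses do not provide.

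The paper does not use a principal-cubes decomposition here at all. Instead it linearizes the modular: setting $h=\phi(\mathcal{A}_{\mathcal{S}}\vec f)/\mathcal{A}_{\mathcal{S}}\vec f$, it writes $\int\phi(\mathcal{A}_{\mathcal{S}}\vec f)w=\int\mathcal{A}_{\mathcal{S}}(\vec f)\,h\,w$, runs exactly the $A_\infty$-Carleson argument from \eqref{ie3.1}--\eqref{ie3.11} to obtain $\frac{4}{\eta}[w]_{A_\infty}\int\mathcal{M}(\vec f)(M^{\mathcal D}_w h)\,w$, and then applies Young's inequality with a small parameter $\varepsilon$. The two resulting terms are handled by the $\Delta_2$ bound \eqref{ie5.3} (producing $\varepsilon^{-C_1}$) and, respectively, by Remark \ref{rem1} (the $m=1$ case of Lemma \ref{lem4.1} applied to $\bar\phi$), the quasi-convexity of $\bar\phi^{\alpha}$, and the $N$-function inequality $\bar\phi(\phi(t)/t)\le\phi(t)$; the latter term is then absorbed into the left-hand side by choosing $\varepsilon\sim[w]_{A_\infty}^{-\alpha}$, which is exactly where the exponent $1+\alpha C_1$ comes from. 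No stopping-time family is needed at all.

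Two concrete problems with your sketch. First, the middle step — controlling $\phi$ applied to the regrouped sum $\sum_{B}\tau(B)\sum_{Q:\pi(Q)=B}\chi_Q$ — is precisely the place you label ``the crux of the argument'' and leave unproved. The chain-counting factor $\sum_{Q:\pi(Q)=B}\chi_Q(x)$ has no pointwise bound, and it is not at all clear how quasi-convexity plus \eqref{ie5.3} by themselves ``absorb'' it with a cost of only $[w]_{A_\infty}^{\alpha C_1}$; without spelling this out the proof does not stand. Second, you invoke quasi-convexity of $\phi^{\alpha}$, which is equivalent to $\bar\phi\in\Delta_2$, i.e.\ $i_\phi>1$ — but Lemma \ref{lem4.3} only assumes $\phi$ is an $N$-function with the sub-multiplicative property, which yields $\phi\in\Delta_2$ and hence quasi-convexity of $\bar\phi^{\alpha}$ (the object the paper actually uses), not of $\phi^{\alpha}$. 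The condition $i_\phi>1$ appears only later as a hypothesis of Theorem \ref{thm1.5}, and the lemma is also needed there in the endpoint case $i_\phi=1$, so you cannot smuggle it in.
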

\begin{proof}
Since $\phi$ satisfies sub-multiplicative condition, that is, for each $t_1,t_2\geq 0,$ $\phi(t_1t_2)\leq \phi(t_1)\phi(t_2).$ Fixed $t_1=2,$  for $t\geq 0$ we have
$\phi(2t)\leq \phi(2)\phi(t) $ which implies that $\phi \in \Delta _2.$
Therefore, there exists some $0<\alpha < 1,$ such that $\bar{\phi}^{\alpha}$ is quasi-convex, i.e., there exists some convex function $\psi$ and $a_1> 1$ such that
	\begin{equation}\label{ie5.4}
		\psi(t)\leq \bar\phi^\alpha(t)\leq a_1\psi(a_1t), t>0.
	\end{equation}
Observe that, we can always assume $a_1\geq 2$ by iteration. We also assume that $\mathcal{S}$ is $\eta$-sparse family where $\eta$ is an absolute constant only depending on $n.$\par
Note that $\phi(\mathcal{A}_{\mathcal{S}}(\vec{f})(x))=0$ when $\mathcal{A}_{\mathcal{S}}(\vec{f})(x)=0$ since $\phi$ is a $N$-function. Now, we define the function $h$ having the following form.
\begin{equation*}
h(x)= \begin{cases}0, &  \mathcal{A}_{\mathcal{S}}(\vec{f})(x)=0, \\ \frac{\phi(\mathcal{A}_{\mathcal{S}}(\vec{f})(x))}{\mathcal{A}_{\mathcal{S}}(\vec{f})(x)}, & otherwise.\end{cases}
\end{equation*}
Let $E_1=\{x\in \mathbb{R}^n: \mathcal{A}_{\mathcal{S}}(\vec{f})(x)=0\}.$ By the definition of $\mathcal{A}_{\mathcal{S}}(\vec{f})$ and the embedding theorem, it shows that
\begin{equation*}\begin{aligned}
			\int_{\mathbb{R}^{n}}\phi(\mathcal{A}_{\mathcal{S}}(\vec{f})(x))w(x)dx &=\int_{\mathbb{R}^{n}\setminus E_1}\phi(\mathcal{A}_{\mathcal{S}}(\vec{f})(x))w(x)dx\\
			&\leq \frac{4}{\eta}[w]_{A_\infty}\int_{\mathbb{R}^{n}}\mathcal{M}(\vec{f})(x)
(M_w^{\mathcal{D}}h(x))w(x)dx.\\
	\end{aligned}\end{equation*}

We now take an $\varepsilon$ with the property that
	\begin{equation*}
		0<\varepsilon=\min\left\{\frac{1}{2},~\frac{1}{a_1a_2^{\prime}}, ~(\frac{\eta}{8[w]_{A_\infty}a_2^{\prime}})^\alpha\cdot\frac{1}{a_2^{\prime}a_1^2}\right\}.
	\end{equation*}
Using the Young's inequality of $\phi,$ it is easy to verify that
\begin{equation*}\begin{aligned}
			\int_{\mathbb{R}^{n}}\phi(\mathcal{A}_{\mathcal{S}}(\vec{f})(x))w(x)dx &\leq \frac{4}{\eta}[w]_{A_\infty}\int_{\mathbb{R}^{n}}\frac{\mathcal{M}(\vec{f})(x)}{\varepsilon}
(M_w^{\mathcal{D}}(\varepsilon h(x))w(x)dx\\
&\leq\frac{4}{\eta}[w]_{A_\infty}\int_{\mathbb{R}^{n}}\left[\phi(\frac{\mathcal{M}(\vec{f})(x)}{\varepsilon})+\bar\phi \left(M_w^{\mathcal{D}}(\varepsilon h(x))\right)\right]w(x)dx\\
			&\leq\frac{4}{\eta}[w]_{A_\infty}2^{C_1}\varepsilon^{-C_1}\int_{\mathbb{R}^{n}}\phi(\mathcal{M}(\vec{f})(x))w(x)dx\\
&\quad+\frac{4}{\eta}[w]_{A_\infty} a_2^{\prime}\int_{\mathbb{R}^{n}}\bar\phi(a_2^{\prime}\varepsilon h(x))w(x)dx,
\end{aligned}\end{equation*}
where the last inequality follows from the fact that $\phi \in \Delta_2$ with $\varepsilon\leq \frac{1}{2}$ and Remark \ref{rem1}.\par
Now we turn our attention to $\int_{\mathbb{R}^{n}}\bar\phi(a_2^{\prime}\varepsilon h(x))w(x)dx.$
Observe that $\psi $ is a convex function, then for	any $0\leq \lambda\leq1, t\geq0,$ $\psi(\lambda t)\leq\lambda\psi(t).$\par
	Therefore, by $(\ref{ie5.4})$, we get
	$$\bar\phi^{\alpha}(a_2^{\prime}\varepsilon h(x))\leq a_1^2a_2^{\prime}\varepsilon\psi(h(x))\leq a_1^2a_2^{\prime}\varepsilon \bar\phi^\alpha(h(x)),$$
where in the above inequality we have used $a_1a_2^{\prime}\varepsilon\leq 1.$\par
This means that
\begin{equation}\label{ie5.5}
\bar\phi(a_2^{\prime}\varepsilon h(x))\leq(a_1^2a_2^{\prime}\varepsilon)^{\frac{1}{\alpha}}\bar\phi(h(x)).
\end{equation}
By the definition of $h$ and $\bar{\phi}\left(\frac{\phi(t)}{t}\right) \leq \phi(t),t>0,$ we see that if $\mathcal{A}_{\mathcal{S}}(\vec{f})(x)\neq0,$ then
$$\bar{\phi}(h(x))= \bar{\phi}\left(\frac{\phi(\mathcal{A}_{\mathcal{S}}(\vec{f})(x))}{\mathcal{A}_{\mathcal{S}}(\vec{f})(x)}\right)
\leq \phi (\mathcal{A}_{\mathcal{S}}(\vec{f})(x)).$$
If $\mathcal{A}_{\mathcal{S}}(\vec{f})(x)=0,$ by the definitions of $\bar{\phi},$
it is easy to check that
$$\bar{\phi}(h(x))= \bar{\phi}\left(0\right)=0
\leq \phi (\mathcal{A}_{\mathcal{S}}(\vec{f})(x)).$$
Combining the above two cases and integrating (\ref{ie5.5}), we deduce that
\begin{equation*}
\begin{aligned}
	\int_{\mathbb{R}^{n}}\phi(\mathcal{A}_{\mathcal{S}}(\vec{f})(x))w(x)dx &\leq \frac{4}{\eta}[w]_{A_\infty}2^{C_1}\varepsilon^{-C_1}\int_{\mathbb{R}^{n}}\phi(\mathcal{M}(\vec{f})(x))w(x)dx\\
&\quad+\frac{4}{\eta}[w]_{A_\infty} a_2^{\prime}(a_1^2a_2^{\prime}\varepsilon)^{\frac{1}{\alpha}} \int_{\mathbb{R}^{n}}\bar\phi(h(x))w(x)dx\\
			&\leq\frac{4}{\eta}[w]_{A_\infty}2^{C_1}\varepsilon^{-C_1}\int_{\mathbb{R}^{n}}\phi(\mathcal{M}(\vec{f})(x))w(x)dx\\
&\quad+\frac{1}{2} \int_{\mathbb{R}^{n}}\phi (\mathcal{A}_{\mathcal{S}}(\vec{f})(x))w(x)dx.\\
\end{aligned}
\end{equation*}
Moreover, a direct calculation shows that $[w]_{A_\infty}\varepsilon^{-C_1}\lesssim [w]_{A_\infty}^{1+\alpha C_1},$ which enable us to conclude that
\begin{equation*}
		\int_{\mathbb{R}^{n}} \phi\left( \mathcal{A}_{\mathcal{S}}(\vec{f})(x)\right) w(x) d x \lesssim [w]_{A_{\infty}}^{1+\alpha C_1} \int_{\mathbb{R}^{n}} \phi\left(\mathcal{M}(\vec{f})(x)\right) w(x) d x.
	\end{equation*}
This completes the proof of Lemma \ref{lem4.3}.
\end{proof}
Now we begin the proof of the main theorems.
\begin{proof}[Proof of Theorem $\ref{thm1.5}$]
We divide the proof into two parts. \\
\textbf{Case 1}: We start with the case $i_\phi>1.$ By Lemma \ref{lem1.1} and the sub-multiplicative property of $\phi,$ we have
\begin{equation}\label{ie5.13}
\begin{aligned}
\int_{\mathbb{R}^n}\phi (|T_{\sigma}(\vec{f})(x)|)w(x)dx &\lesssim \int_{\mathbb{R}^n}\phi(\mathcal{A}_\mathcal{S} (\vec{f}) (x))w(x)dx  \\
&\lesssim [w]_{A_\infty}^{1+\alpha C_1}\int_{\mathbb{R}^n}\phi(\mathcal{M} (\vec{f}) (x))w(x)dx \\
&\lesssim[w]_{A_\infty}^{1+\alpha C_1}\left(\prod_{i=1}^m \int_{\mathbb{R}^n}\phi^m (a_3[w]_{A_q}^{\frac{1}{q}}|f_i(x)|)w(x)dx\right)^{\frac{1}{m}},
\end{aligned}
\end{equation}
where we have applied Lemma \ref{lem4.3} and Lemma \ref{lem4.2} with $r=1.$\par
For the sake of accuracy, we discuss two scenarios.\par
If $a_3[w]_{A_q}^{\frac{1}{q}} \leq 2,$ by the sub-multiplicative property and the monotonicity of $\phi$,
$$\int_{\mathbb{R}^n}\phi (|T_{\sigma}(\vec{f})(x)|)w(x)dx  \lesssim [w]_{A_\infty}^{1+\alpha C_1}\left(\prod_{i=1}^m \int_{\mathbb{R}^n}\phi^m (|f_i(x)|)w(x)dx\right)^{\frac{1}{m}}.$$
For the case $a_3[w]_{A_q}^{\frac{1}{q}} > 2,$ by (\ref{ie5.3}),  we obtain that
$$
\int_{\mathbb{R}^n}\phi (|T_{\sigma}(\vec{f})(x)|)w(x)dx  \lesssim [w]_{A_\infty}^{1+\alpha C_1}[w]_{A_q}^{\frac{C_1}{q}m}\left(\prod_{i=1}^m \int_{\mathbb{R}^n}\phi^m ( |f_i(x)|)w(x)dx\right)^{\frac{1}{m}}.
$$\par

\textbf{Case 2}: Now, we consider the endpoint case $i_\phi=1.$ In this case, by (\ref{ie5.13}), applying the extrapolation theorem \cite[Theorem 3.1]{cur} to $(\phi (|T_{\sigma}(\vec{f})(x)|),\phi (\mathcal{M}(\vec{f})(x))$, we have
$${||\phi (|T_{\sigma}(\vec{f})(x)|)||}_{\X(w)} \lesssim {||\phi (\mathcal{M}(\vec{f}))||}_{\X(w)}, $$
holds for RIQBFS $\X$ with $q_{\X}<\infty.$\par
Let $\X=L^{\frac{1}{m},\infty }(\mathbb{R}^n),$ we observe that
$$\sup_{\lambda>0} \phi(\lambda)w\left(\{ x\in\mathbb{R}^n:|T_b(\vec{f})(x)|>\lambda\}\right)^m  \lesssim \sup_{\lambda>0} \phi(\lambda)w\left(\{ x\in\mathbb{R}^n:\mathcal{M}(\vec{f})(x)>\lambda\}\right)^m.$$
For any dyadic cube $Q,$ and $x\in Q,$ it is easy to see
\begin{equation}
\begin{aligned}
\prod_{i=1}^m  \frac{1}{|Q|} \int_Q |f_i(y)|dy
& \leq[w]_{A_1}^m \prod_{i=1}^m \frac{1}{w(Q)} \int_Q |f_i(y)|w(y)dy  \\
& \leq [w]_{A_1}^m \mathcal{M}_w^{\mathcal{D}}(\vec{f})(x),   \\
\end{aligned}
\end{equation}
which further implies that
$\phi(\mathcal{M}(\vec{f})(x)) \leq \phi ([w]_{A_1}^m \mathcal{M}_w^{\mathcal{D}}(\vec{f})(x) ).$ \par
This fact together with Lemma \ref{lem4.1} gives that
$$\phi(\mathcal{M}(\vec{f})(x)) \leq \mathcal{M}_w^{\mathcal{D}}(\phi([w]_{A_1}f_1),\cdots,\phi([w]_{A_1}f_m))(x)=:\mathcal{M}_w^{\mathcal{D}}(\vec{f_\phi})(x).$$

Using the method of Theorem 1.5 in \cite{li2}, we can obtain that $\mathcal{M}_w^{\mathcal{D}}(\vec{f})$ is bounded from
$L^1(w)\times\cdots\times L^1(w)$ into $L^{\frac{1}{m},\infty }(w).$\par
Then, for any $\lambda >0,$ it is easy to verify that
\begin{equation*}
\begin{aligned}
w(\{x \in \mathbb{R}^n:\mathcal{M}(\vec{f})(x)>\lambda\})
&=w(\{x \in \mathbb{R}^n:\phi(\mathcal{M}(\vec{f})(x))>\phi(\lambda)\})\\
& \lesssim w(\{x \in \mathbb{R}^n:\mathcal{M}_w^{\mathcal{D}}(\vec{f}_\phi)(x)>\phi(\lambda)\})  \\
& \lesssim \left(\frac{1}{\phi(\lambda)} \prod_{i=1}^m {||f_{i,\phi}||}_{L^1(w)}\right)^{\frac{1}{m}}. \\
\end{aligned}
\end{equation*}
Therefore
$$\sup_{\lambda>0} \phi(\lambda)w(\{x \in \mathbb{R}^n:\mathcal{M}(\vec{f})(x)>\lambda\})^m  \lesssim
\prod_{i=1}^m \int_{\mathbb{R}^n} \phi([w]_{A_1}f_i(x) ) w(x)dx,$$
which finish the proof of Theorem \ref{thm1.5} with the case $i_\phi=1,$ and also completes the proof of Theorem \ref{thm1.5}.
\end{proof}
\begin{remark}
The above proof shows that if $\phi$ satisfies $\phi(\prod_{i=1}^{m}t_i)\leq \prod_{i=1}^{m}\phi _i(t_i)$ with $N$-functions $\phi_1,\ldots,\phi_m,$ then Theorem \ref{thm1.5} has the more general form as follows.
\begin{equation*}
\int_{\mathbb{R}^n}\phi(|T_{\sigma}(\vec{f})(x)|)w(x)dx\leq C(\phi,w)
 \left(\prod\limits_{i=1}^{m}\int_{\mathbb{R}^n}\phi_i^m(|f_i(x)|)w(x)dx \right)^{\frac{1}{m}}.
\end{equation*}
\end{remark}
\begin{proof}[Proof of Theorem $\ref{thm1.6}$]
By using the sparse domination results of Lemma \ref{lem1.1}, we see that
\begin{equation*}
\begin{aligned}
			\left|T_{\sigma, \Sigma \mathbf{b}}(\vec{f})(x)\right|
&\leq\max\{2, 3^n\cdot Cm\}\sum_{i=1}^{3^n} \sum_{j=1}^m\left(\frac{\mathcal{A}_{\mathcal{S}_i, b_j}(\vec{f})(x)+\mathcal{A}_{\mathcal{S}_i, b_j}^*(\vec{f})(x)}{3^nm}\right).\\
\end{aligned}
\end{equation*}
Using condition $\phi \in \Delta_2$ and the convexity of $\phi,$ it holds that

\begin{equation*}
\begin{aligned}
			\phi\left(|T_{\sigma, \Sigma \mathbf{b}}(\vec{f})(x)|\right)
&\leq \frac{C}{3^n2m}\sum_{i=1}^{3^n} \sum_{j=1}^m \left[\phi\left(\mathcal{A}_{\mathcal{S}_i, b_j}(\vec{f})(x)\right)+\phi\left(\mathcal{A}_{\mathcal{S}_i, b_j}^*(\vec{f})(x)\right)\right].\\
\end{aligned}
\end{equation*}
From this inequality, it follows that
\begin{equation}\label{ie5.7}
\begin{aligned}
		\int_{\mathbb{R}^{n}} \phi\left(|T_{\sigma, \Sigma \mathbf{b}}(\vec{f})(x)|\right) w(x) d x \lesssim  \sum_{i=1}^{3^n} \sum_{j=1}^m &\left(\int_{\mathbb{R}^{n}} \phi(\mathcal{A}_{\mathcal{S}_i, b_j}(\vec{f})(x)) w(x) d x\right. \\
&+ \left.\int_{\mathbb{R}^{n}}\phi\left(\mathcal{A}_{\mathcal{S}_i, b_j}^*(\vec{f})(x)\right)w(x)dx \right).\\
\end{aligned}
	\end{equation}\par

Consider now the contribution of $\int_{\mathbb{R}^{n}} \phi(\mathcal{A}_{\mathcal{S}_i, b_j}(\vec{f})(x)) w(x) d x$ for $1\leq i \leq3^n, 1\leq j\leq m.$ Using the Carleson embedding theorem, we apply (\ref{eq10}) in Theorem \ref{thm1.2} to obtain

\begin{equation}\label{ie5.8}
\begin{aligned}
	\int_{\mathbb{R}^{n}} \phi(\mathcal{A}_{\mathcal{S}_i, b_j}(\vec{f})(x)) w(x) d x& =\sum_{Q \in \mathcal{S}_{i}}\int_{Q}\left|b_j(x)-b_{j,Q}\right|\frac{\phi(\mathcal{A}_{\mathcal{S}_{i}, b_j} (\vec{f})(x))}{\mathcal{A}_{\mathcal{S}_{i}, b_j} (\vec{f})(x)}w(x) d x \left(\prod_{i=1}^m\langle|f_i|\rangle_{Q}\right)\\
			&\leq 2\sum_{Q \in \mathcal{S}_{i}} \left\|b_j-b_{j,Q}\right\|_{\exp L(w), Q}\|\frac{\phi(\mathcal{A}_{\mathcal{S}_{i}, b_j} (\vec{f})(x))}{\mathcal{A}_{\mathcal{S}_{i}, b_j} (\vec{f})(x)} \|_{L(\log L)(w), Q}\\
&\quad \times w(Q)\left(\prod_{i=1}^m\langle|f_i|\rangle_{Q}\right)\\
			&\leq C^{\prime}[w]_{A_{\infty}}^2\|b_j\|_{\mathrm{BMO}} \\
&\quad \times \int_{\mathbb{R}^{n}} \mathcal{M}(\vec{f})(x) (M_{w}^{\mathcal{D}})^{2}\left(\frac{\phi(\mathcal{A}_{\mathcal{S}_{i}, b_j} (\vec{f})(x))}{\mathcal{A}_{\mathcal{S}_{i}, b_j} (\vec{f})(x)}\right)(x) w(x) d x,\\
		\end{aligned}
	\end{equation}
where $C^{\prime}>0$ is an absolute constant which is independent of $w$.\par
For any $1\leq j\leq m,$ we take some $\varepsilon_j$ in the following form:
\begin{equation*}
		0<\varepsilon_j=\min\left\{\frac{1}{2},~\frac{1}{a_1a_2^{\prime}}, ~\left(\frac{1}{2C^{\prime}[w]_{A_\infty}^2 (a_2^{\prime})^2\|b_j\|_{\mathrm{BMO}}}\right)^\alpha\cdot\frac{1}{(a_2^{\prime})^2a_1^2}\right\}.
	\end{equation*}
Then the Young's inequality gives that
\begin{equation}\label{ie5.9}
\begin{aligned}
	\int_{\mathbb{R}^{n}} \mathcal{M}(\vec{f})(x) (M_{w}^{\mathcal{D}})^{2}&\left(\frac{\phi(\mathcal{A}_{\mathcal{S}_{i}, b_j} (\vec{f})(x))}{\mathcal{A}_{\mathcal{S}_{i}, b_j} (\vec{f})(x)}\right)(x) w(x) d x\\
& \leq2^{C_1}\varepsilon_j^{-C_1}\int_{\mathbb{R}^{n}}\phi(\mathcal{M}(\vec{f})(x))w(x)dx\\
&\quad+ a_2^{\prime}(a_1^2(a_2^{\prime})^2\varepsilon_j)^{\frac{1}{\alpha}} \int_{\mathbb{R}^{n}}\bar\phi\left(\frac{\phi(\mathcal{A}_{\mathcal{S}_{i}, b_j} (\vec{f})(x))}{\mathcal{A}_{\mathcal{S}_{i}, b_j} (\vec{f})(x)}\right)w(x)dx\\
			&\leq2^{C_1}\varepsilon_j^{-C_1}\int_{\mathbb{R}^{n}}\phi(\mathcal{M}(\vec{f})(x))w(x)dx\\
&\quad+ \frac{1}{2C^{\prime}[w]_{A_{\infty}}^2\|b_j\|_{\mathrm{BMO}}} \int_{\mathbb{R}^{n}} \phi(\mathcal{A}_{\mathcal{S}_i, b_j}(\vec{f})(x)) w(x) d x,\\
		\end{aligned}
	\end{equation}
where in the first inequality we have used Lemma \ref{lem4.1} twice and obtained
\begin{equation*}
		\int_{\mathbb{R}^{n}}\bar\phi((M_w^{\mathcal{D}})^2f(x))w(x)dx\leq (a_2^{\prime})^2 \int_{\mathbb{R}^{n}}\bar\phi\left((a_2^{\prime})^2|f(x)|\right)w(x)dx.
	\end{equation*}
Applying the estimates (\ref{ie5.8}) and (\ref{ie5.9}), we may obtain
\begin{equation}\label{ie5.10}
\int_{\mathbb{R}^{n}} \phi(\mathcal{A}_{\mathcal{S}_i, b_j}(\vec{f})(x)) w(x) d x\lesssim [w]_{A_\infty}^{2+2\alpha C_1}\|b_j\|_{\mathrm{BMO}}^{1+\alpha C_1}\int_{\mathbb{R}^{n}} \phi\left(\mathcal{M}(\vec{f})(x)\right) w(x) d x.
\end{equation}\par

Now we turn to the proof of $\int_{\mathbb{R}^{n}} \phi(\mathcal{A}_{\mathcal{S}_i, b_j}^*(\vec{f})(x)) w(x) d x$ for $1\leq i \leq3^n, 1\leq j\leq m.$
The same reasoning as what we have dealt with $\mathcal{I}_{i,j}^2,$ together with the H\"{o}lder inequality,  yields that
\begin{equation*}
		\begin{aligned}
			\int_{\mathbb{R}^{n}} \phi\left(\mathcal{A}_{\mathcal{S}_{i}, b_j}^{*}(\vec{ f})(x)\right) w(x) d x
			&\leq\sum_{Q \in \mathcal{S}_{i}}\langle\left|b_j-b_{j,Q}\right|^{r^{\prime}}\rangle_{Q}^{1 /r^{\prime}}\left(\prod_{i=1}^m\left\langle\left|f_i\right|^r\right\rangle_{Q}^{1/r}\right)\\
& \qquad \times \int_Q\frac{\phi(\mathcal{A}_{\mathcal{S}_{i}, b_j}^{*}(\vec{ f})(x))}{\mathcal{A}_{\mathcal{S}_{i}, b_j}^{*}(\vec{ f})(x)} w(x) d x \\
			&\lesssim  \|b_j\|_{\mathrm{BMO}}\sum_{Q \in \mathcal{S}_{i}} \int_Q\frac{\phi(\mathcal{A}_{\mathcal{S}_{i}, b_j}^{*}(\vec{ f})(x))}{\mathcal{A}_{\mathcal{S}_{i}, b_j}^{*}(\vec{ f})(x)} w(x) d x\prod_{i=1}^m\left\langle\left|f_i\right|^r\right\rangle_{Q}^{1/r}\\
			&\lesssim  \|b_j\|_{\mathrm{BMO}}[w]_{A_{\infty}} \\
& \quad \times \int_{\mathbb{R}^{n}} \mathcal{M}_{r}(\vec{f})(x) M_{w}^{\mathcal{D}}\left(\frac{\phi(\mathcal{A}_{\mathcal{S}_{i}, b_j}^{*}(\vec{ f}))}{\mathcal{A}_{\mathcal{S}_{i}, b_j}^{*}(\vec{ f})}\right)(x) w(x) d x.\\
		\end{aligned}
	\end{equation*}
Then, some elementary calculations give that
\begin{equation}\label{ie5.14}
\int_{\mathbb{R}^{n}} \phi\left(\mathcal{A}_{\mathcal{S}_{i}, b_j}^{*}(\vec{ f})(x)\right) w(x) d x \lesssim
 [w]_{A_\infty}^{1+2\alpha C_1}\|b_j\|_{\mathrm{BMO}}^{1+\alpha C_1}\int_{\mathbb{R}^{n}} \mathcal{M}_{r}(\vec{f})(x)w(x) d x .
 \end{equation}
We are now in the position to finish our proof. First, by the estimates (\ref{ie5.7}), (\ref{ie5.10}) and (\ref{ie5.14}), we get
\begin{equation*}
\begin{aligned}
		\int_{\mathbb{R}^{n}} \phi\left(|T_{\sigma, \Sigma \mathbf{b}}(\vec{f})(x)|\right) w(x) d x \lesssim  [w]_{A_\infty}^{1+2\alpha C_1}\|\vec{b}\|_{\mathrm{BMO}}^{1+\alpha C_1} &\left([w]_{A_\infty} \int_{\mathbb{R}^{n}} \phi\left(\mathcal{M}(\vec{f})(x)\right) w(x) d x\right. \\
&+ \left.\int_{\mathbb{R}^{n}}\mathcal{ M}_{r}(\vec{f})(x)w(x) d x  \right).\\
\end{aligned}
\end{equation*}\par
Then, for any $r>1$ and $1<q<\frac{i_{\phi}}{r},$ there exist $a_3$ and $a_3^{\prime}$ satisfying Lemma \ref{lem4.2}, such that
	\begin{equation*}
		\int_{\mathbb{R}^n} \phi\left(M_r(\vec{f})(x)\right) w(x) d x \leq a_3^{\prime}\left(\prod_{i=1}^m\int_{\mathbb{R}^n} \phi^m\left(a_3^{\prime}[w]_{A_q}^{\frac{1}{qr}}\left|f_i(x)\right|\right) w(x) d x\right)^{\frac{1}{m}};
	\end{equation*}
 \begin{equation*}
		\int_{\mathbb{R}^n} \phi\left(\mathcal{M}(\vec{f})(x)\right) w(x) d x \leq a_3\left(\prod_{i=1}^m\int_{\mathbb{R}^n} \phi^m\left(a_3[w]_{A_q}^{\frac{1}{q}}\left|f_i(x)\right|\right) w(x) d x\right)^{\frac{1}{m}}.
	\end{equation*}
To finish the proof, we only need to consider four cases for modular inequalities.\par

Case \uppercase\expandafter{\romannumeral1}: $a_3[w]_{A_{q}}^{\frac{1}{q}}<2,$
	$a_3^{\prime}[w]_{A_q}^{\frac{1}{qr}}<2$.\par
	In this case, we have
	\begin{equation*}
\begin{aligned}
		\int_{\mathbb{R}^{n}} \phi\left(|T_{\sigma, \Sigma \mathbf{b}}(\vec{f})(x)|\right) w(x) d x
\lesssim & [w]_{A_\infty}^{1+2\alpha C_1}\|\vec{b}\|_{\mathrm{BMO}}^{1+\alpha C_1}([w]_{A_\infty}+1) \\ &\times\left(\prod\limits_{i=1}^{m}\int_{\mathbb{R}^n}\phi^m(2|f_i(x)|)w(x)dx \right)^{\frac{1}{m}}. \\
\end{aligned}
\end{equation*}\par
	Case \uppercase\expandafter{\romannumeral2}: $a_3[w]_{A_{q}}^{\frac{1}{q}}\geq 2,$
	$a_3^{\prime}[w]_{A_q}^{\frac{1}{qr}}<2$.\par
	For this case, by $\Delta_2$ condition, we obtain
	\begin{equation*}
\begin{aligned}
		\int_{\mathbb{R}^{n}} \phi\left(|T_{\sigma, \Sigma \mathbf{b}}(\vec{f})(x)|\right) w(x) d x
\lesssim & [w]_{A_\infty}^{1+2\alpha C_1}\|\vec{b}\|_{\mathrm{BMO}}^{1+\alpha C_1}([w]_{A_\infty}[w]_{A_q}^{\frac{C_1m}{q}}+1) \\ &\times\left(\prod\limits_{i=1}^{m}\int_{\mathbb{R}^n}\phi^m(2|f_i(x)|)w(x)dx \right)^{\frac{1}{m}}. \\
\end{aligned}
\end{equation*}\par
	Case \uppercase\expandafter{\romannumeral3}: $a_3[w]_{A_{q}}^{\frac{1}{q}}< 2,$
	$a_3^{\prime}[w]_{A_q}^{\frac{1}{qr}}\geq 2$.\par
	To see this case, using $\Delta_2$ condition again, we get
	\begin{equation*}
\begin{aligned}
		\int_{\mathbb{R}^{n}} \phi\left(|T_{\sigma, \Sigma \mathbf{b}}(\vec{f})(x)|\right) w(x) d x
\lesssim & [w]_{A_\infty}^{1+2\alpha C_1}\|\vec{b}\|_{\mathrm{BMO}}^{1+\alpha C_1}([w]_{A_\infty}+[w]_{A_q}^{\frac{C_1m}{qr}}) \\ &\times\left(\prod\limits_{i=1}^{m}\int_{\mathbb{R}^n}\phi^m(2|f_i(x)|)w(x)dx \right)^{\frac{1}{m}}. \\
\end{aligned}
\end{equation*}\par
	Case \uppercase\expandafter{\romannumeral4}: $a_3[w]_{A_{q}}^{\frac{1}{q}}\geq2,$
	$a_3^{\prime}[w]_{A_q}^{\frac{1}{qr}}\geq 2$.\par
	For this case, the result have the following representation by using $\Delta_2$ condition,
	\begin{equation*}
\begin{aligned}
		\int_{\mathbb{R}^{n}} \phi\left(|T_{\sigma, \Sigma \mathbf{b}}(\vec{f})(x)|\right) w(x) d x
\lesssim & [w]_{A_\infty}^{1+2\alpha C_1}[w]_{A_q}^{\frac{C_1m}{q}}\|\vec{b}\|_{\mathrm{BMO}}^{1+\alpha C_1}([w]_{A_\infty}+1) \\ &\times\left(\prod\limits_{i=1}^{m}\int_{\mathbb{R}^n}\phi^m(|f_i(x)|)w(x)dx \right)^{\frac{1}{m}}. \\
\end{aligned}
\end{equation*}
Combining the four cases, each of them has an exact expression for the constant of $w$, we have thus completed the proof of the theorem.
\end{proof}

For $\frac{1}{p}=\frac{1}{p_1}+\ldots +\frac{1}{p_m}$ with $1<p_1,\ldots, p_m<\infty,$ we note that Theorem \ref{thm1.6} cannot imply the classical $L^{p_1} \times\ldots \times L^{p_m}$ to $L^p$ boundedness when $\phi=t^p.$ In fact, by Theorem \ref{thm1.6} we have the following corollary consistent with the classical case.\par
\begin{corollary}\label{cor3}
Let $\sigma \in S_{\rho,\delta}^l(n,m)$ with $0 \leq \rho,\delta \leq 1, l < mn(\rho-1),$  $\vec{b} =\left(b_{1}, \ldots, b_{m}\right)\in \mathrm{BMO}^m.$ Assume that~$\phi$ is a~$N-$function with sub-multiplicative property. Let\\ $\frac{1}{p}=\frac{1}{p_1}+\ldots +\frac{1}{p_m}$ with $1<p_1,\ldots, p_m<\infty, $ then for each~$1< r < \infty,$ $r<i_{\phi}<\infty,$ there exist constants~$\alpha, C(\phi,w,r)$ such that for
every~$1<q<\frac{i_{\phi}}{r}$ and ~$w\in A_q ,$
\begin{equation*}
\begin{aligned}
\int_{\mathbb{R}^n}\phi(|T_{\sigma,\Sigma \mathbf{b}}(\vec{f})(x)|)w(x)dx\leq &C(\phi,w,r)\| \vec{b}\|_{\mathrm{BMO}}^{1+\alpha C_1}\\
&\times \left(\prod\limits_{i=1}^{m}\int_{\mathbb{R}^n}\phi^{\frac{p_i}{p}}(|f_i(x)|)w(x)dx \right)^{\frac{p}{p_i}}.
\end{aligned}
\end{equation*}\par
In particular, if $\phi(t)=t^p$ then
$$\|T_{\sigma,\Sigma \mathbf{b}}(\vec{f})\|_{L^p(w)}\lesssim C(\phi,w,r) \|\vec{b}\|_{\mathrm{BMO}}^{1+\alpha C_1}\prod\limits_{i=1}^{m}\|f_i\|_{L^{p_i}(w)}.$$
\end{corollary}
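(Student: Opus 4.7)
The strategy is to mirror the proof of Theorem \ref{thm1.6} line by line, while replacing the two underlying multilinear maximal-function modular estimates (Lemmas \ref{lem4.1} and \ref{lem4.2}) by refined versions adapted to the partition of unity $\sum_{i=1}^m p/p_i = 1$ that comes from the hypothesis $\frac{1}{p}=\sum_i \frac{1}{p_i}$. The only place in the proof of Lemma \ref{lem4.1} where the symmetric exponent $\frac{1}{m}$ really enters is the final H\"older step, which invokes boundedness of the weighted dyadic multilinear maximal operator from $L^{m/\alpha}(w)^m$ into $L^{1/\alpha}(w)$. I will instead invoke its boundedness from $\prod_i L^{p_i/(p\alpha)}(w)$ into $L^{1/\alpha}(w)$: the sum condition $\sum_i \frac{p\alpha}{p_i}=\alpha$ is exactly the hypothesis, and each exponent $p_i/(p\alpha)>1$ is automatic since $p_i\ge p$ and $0<\alpha<1$.

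\textbf{Refined maximal estimates and insertion.} With this choice, the same Jensen/quasi-convexity argument used in the proof of Lemma \ref{lem4.1} yields
\begin{equation*}
\int_{\mathbb R^n}\phi\bigl(\mathcal{M}_w^{\mathcal D}(\vec f)\bigr)w\,dx \;\lesssim\; \prod_{i=1}^{m}\left(\int_{\mathbb R^n}\phi^{p_i/p}\bigl(c\,|f_i|\bigr)w\,dx\right)^{p/p_i},
\end{equation*}
and then the $\phi_{qr}$ substitution used in Lemma \ref{lem4.2} produces its $\mathcal{M}_r$-analogue with the argument $c|f_i|$ replaced by $c[w]_{A_q}^{1/(qr)}|f_i|$. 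Plugging these refined estimates into the proof of Theorem \ref{thm1.6}---sparse domination via Lemma \ref{lem1.1}, passing $\phi$ through the resulting finite sum using $\phi\in\Delta_2$, running the Young-inequality absorption of (\ref{ie5.8})--(\ref{ie5.14}) to reduce everything to $\int\phi(\mathcal M(\vec f))w\,dx$ and $\int\phi(\mathcal M_r(\vec f))w\,dx$, and then distinguishing the four cases according to whether the constants $a_3[w]_{A_q}^{1/q}$ and $a_3'[w]_{A_q}^{1/(qr)}$ are $\le 2$ or $>2$ exactly as in the proof of Theorem \ref{thm1.6}---produces the displayed modular inequality with an explicit constant $C(\phi,w,r)$.

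\textbf{The particular case $\phi(t)=t^p$.} For the concluding assertion, the hypothesis $r<i_\phi$ becomes $r<p$, so it suffices to fix any $r\in(1,p)$ and any $w\in A_{p/r}=\bigcup_{1<q<p/r}A_q$. One then computes $\phi^{p_i/p}(t)=t^{p_i}$, whence $\int\phi^{p_i/p}(|f_i|)w\,dx=\|f_i\|_{L^{p_i}(w)}^{p_i}$ and raising to the exponent $p/p_i$ gives $\|f_i\|_{L^{p_i}(w)}^{p}$. Since the left-hand side equals $\|T_{\sigma,\Sigma\mathbf b}(\vec f)\|_{L^p(w)}^p$, extracting a $p$-th root yields the desired classical $L^{p_1}(w)\times\cdots\times L^{p_m}(w)\to L^p(w)$ strong-type bound with the stated dependence on $\|\vec b\|_{\mathrm{BMO}}$. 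The only genuine technical point is the simultaneous admissibility of the H\"older exponents $q_i=p_i/(p\alpha)$, but this is automatic from $\sum 1/p_i=1/p$ together with $\alpha\in(0,1)$, so no obstruction arises and the remainder is bookkeeping.
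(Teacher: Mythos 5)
Your proposal is correct and mirrors the paper's proof essentially verbatim. The paper does exactly what you describe: it applies H\"older's inequality with exponents $p_i/(p\alpha)$ (using $\sum_i p\alpha/p_i=\alpha$) to $\|\mathcal{M}_w^{\mathcal D}(\vec f_\phi)\|_{L^{1/\alpha}(w)}$, invokes the $L^{p_i/(p\alpha)}(w)$-boundedness of $M_w^{\mathcal D}$ factorwise, combines with the pointwise estimate from the proof of Lemma \ref{lem4.1} to obtain the refined modular bound with exponents $p/p_i$ in place of $1/m$, then runs the substitution from Lemma \ref{lem4.2} for the $\mathcal M_r$ analogue and feeds both into the proof of Theorem \ref{thm1.6}; your remarks about $p_i/(p\alpha)>1$ being automatic (since $p_i\ge p$ and $\alpha<1$) and about extracting $p$-th roots when $\phi(t)=t^p$ are the same bookkeeping the paper leaves implicit.
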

\begin{proof}
Notice that $\frac{1}{p}=\frac{1}{p_1}+\ldots +\frac{1}{p_m},$ by H\"{o}lder's inequality and Lemma \ref{lem4.1}, we see that
\begin{equation*}
\begin{aligned}
\|\mathcal{M}_w^{\mathcal{D}}(\vec{f}_\phi)\|_{L^{\frac{1}{\alpha}}(w)}^{\frac{1}{\alpha}}&\leq
\int_{\mathbb{R}^n} |\prod _{i=1}^mM_{w}^{\mathcal{D}}(f_{\phi,i})(x)|^{\frac{1}{\alpha}} w(x) d x \\
&\leq\prod _{i=1}^m\left(\int_{\mathbb{R}^n} |M_{w}^{\mathcal{D}}(f_{\phi,i})(x)|^{\frac{p_i}{p\alpha}} w(x) d x\right)^{\frac{p}{p_i}}\\
&\leq \prod_{i=1}^m{(\frac{p_i}{p\alpha})^{\prime}}^{\frac{1}{\alpha}}\left(\int_{\mathbb{R}^n} |f_{\phi,i}(x)|^{\frac{p_i}{p\alpha}}w(x)d x\right)^{\frac{p}{p_i}}.\\
\end{aligned}
\end{equation*}
This inequality, together with the estimate of (\ref{ie5.11}), indicates that
$$\int_{\mathbb{R}^n} \phi\left(\mathcal{M}_w^{\mathcal{D}}(\vec{f})(x)\right) w(x) d x \leq \prod_{i=1}^m{(\frac{p_i}{p\alpha})^{\prime}}^{\frac{1}{\alpha}}\left(\int_{\mathbb{R}^n} \phi^{\frac{p_i}{p}}(|f_i(x)|)w(x)d x\right)^{\frac{p}{p_i}}.$$
Using the method in Lemma \ref{lem4.2}, it further gives that
\begin{equation}\label{ie5.12}
\int_{\mathbb{R}^n} \phi\left(\mathcal{M}_r(\vec{f})(x)\right) w(x) d x \leq a_3\prod_{i=1}^m\left(\int_{\mathbb{R}^n} \phi^{\frac{p_i}{p}}(a_3[w]_{A_q}^{\frac{1}{qr}}|f_i(x)|)w(x)d x\right)^{\frac{p}{p_i}}.
\end{equation}
Then the proof process of Theorem \ref{thm1.6} and (\ref{ie5.12}) together imply the desired result.
\end{proof}

\section{ applications}
This section is devoted to giving some applications of our theorems for multilinear Fourier multiplier and multilinear square function. As a corollary of Theorem \ref{thm1.1}, we first give the following results of a class of the multilinear Calder\'{o}n-Zygmund singular integral operators.
\begin{corollary}
Let $T_K$ is a multilinear Calder\'{o}n-Zygmund singular integral operator with kernel function $K$ satisfies
$$
K(x, \vec{y})=\int_{\mathbb{R}^{m n}} \sigma(x, \vec{\xi}) e^{2 \pi i \vec{y} \cdot \vec{\xi}} d \vec{\xi},
$$
where $\vec{x}=(x, \ldots, x) \in \mathbb{R}^{m n},$ $\sigma \in S_{\rho, \delta}^r(n, m)$ with $\rho, \delta \in[0,1]$ and $r+\delta<n m(\rho-1)$ for some $0 \leq \delta \leq 1.$
Let $\mathbb{X},\mathbb{X}_i$ be rearrangement invariant Banach function spaces with
$1<p_{\mathbb{X}},p_{\mathbb{X}_i}\leq q_{\mathbb{X}}, q_{\mathbb{X}_i}<\infty$ for $i=1,2\cdots, m .$  Assume that $m$-product operator $P_m$ maps
$\overline{\mathbb{X}}_1\times \cdots \times\overline{\mathbb{X}}_m$ to $\overline{\mathbb{X}},$ then for every
$ w \in A_{\min\limits_{i}{\{p_{\mathbb{X}_i}\}}},$ $T_K $ is bounded from  $\X_1(w)\times \cdots \times \X_m(w)$ to $ \X(w).$
\end{corollary}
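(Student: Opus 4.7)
The plan is to identify $T_K$ with the multilinear pseudo-differential operator $T_\sigma$ and then invoke Theorem \ref{thm1.1} directly. The first step is to start from the integral representation
\[
T_K(\vec{f})(x) = \int_{\mathbb{R}^{mn}} K(x, \vec{x} - \vec{y}) \prod_{i=1}^m f_i(y_i)\, d\vec{y}, \qquad \vec{x} = (x, \ldots, x) \in \mathbb{R}^{mn},
\]
substitute the given oscillatory representation of $K$, and apply Fubini's theorem on the Schwartz class $\mathcal{S}(\mathbb{R}^n)^m$ to obtain
\[
T_K(\vec{f})(x) = \int_{\mathbb{R}^{mn}} \sigma(x, \vec{\xi})\, e^{2\pi i x \cdot (\xi_1 + \cdots + \xi_m)} \prod_{i=1}^m \widehat{f}_i(\xi_i)\, d\vec{\xi} = T_\sigma(\vec{f})(x).
\]

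Next, I would verify that the symbol $\sigma$ meets the hypotheses of Theorem \ref{thm1.1}. Since $0 \leq \delta \leq 1$, the stated assumption $r + \delta < nm(\rho - 1)$ immediately implies $r < nm(\rho - 1)$, which is precisely the order condition required there. All other structural hypotheses (rearrangement invariance of $\X$ and the $\X_i$, the Boyd-index bounds $1 < p_{\X}, p_{\X_i} \leq q_{\X}, q_{\X_i} < \infty$, boundedness of the product map $P_m$, and $w \in A_{\min_i \{p_{\X_i}\}}$) are inherited verbatim from the hypothesis of the corollary.

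Applying Theorem \ref{thm1.1} to $T_\sigma = T_K$ then yields the quantitative bound
\[
\|T_K(\vec{f})\|_{\X(w)} \lesssim [w]_{A_\infty} \prod_{i=1}^m [w]_{A_{p_{\X_i}}}^{1/p_{\X_i}} \|f_i\|_{\X_i(w)},
\]
from which the claimed qualitative boundedness of $T_K$ from $\X_1(w) \times \cdots \times \X_m(w)$ to $\X(w)$ follows at once.

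The only delicate point is the Fubini step used to pass from $T_K$ to $T_\sigma$: the integral defining $K(x, \vec{y})$ is oscillatory and must be interpreted distributionally, so the interchange of integration is not automatic. This is exactly where the strengthened decay $r + \delta < nm(\rho - 1)$ (with its extra $\delta$ term, beyond what Theorem \ref{thm1.1} itself demands) pays off: it provides enough additional regularity of $\sigma$ to render $K$ a genuine function away from the diagonal with a controlled integrable singularity, justifying the interchange on a dense subclass of Schwartz inputs. Once $T_K = T_\sigma$ has been established on this dense subclass, the identity extends by the boundedness already supplied by Theorem \ref{thm1.1}.
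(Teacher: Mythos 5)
Your argument matches the paper's: the proof there simply cites Theorem \ref{thm1.1} together with \cite[Proposition 3.7]{cao}, the latter supplying precisely the identification $T_K=T_\sigma$ under the strengthened decay $r+\delta<nm(\rho-1)$ that you spell out via the oscillatory-kernel/Fubini computation. Your observation that $r+\delta<nm(\rho-1)$ with $\delta\geq 0$ forces $r<nm(\rho-1)$, so that Theorem \ref{thm1.1} applies, is exactly the remaining step, and the rest of the hypotheses carry over verbatim.
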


\begin{proof}
This corollary can be directly deduced from Theorem \ref{thm1.1} and \cite[Proposition 3.7]{cao}.
\end{proof}
\subsection{ Multilinear Fourier multiplier}
\par In this subsection, we apply our results to derive the boundedness of multilinear Fourier multipliers in rearrangement invariant Banach function spaces which contain the $L^p$ space.\par
The motivation for multilinear Fourier multipliers comes in large part from the fact that people began to consider a class of kernel conditions which are weaker than the H\"{o}lder continuity kernels, and consider the boundedness of some operators on the products of Lebesgue spaces (see \cite{kur}).  For the sake of simplicity, we only consider the bilinear case. Now we give the definition of bilinear Fourier multiplier.\par
 For integer $s$, we assume that $m \in C^s\left(\mathbb{R}^{2 n} \backslash\{0\}\right)$ satisfying the following condition:
\begin{equation}\label{con1}
\left|\partial_{\xi_1}^\alpha \partial_{\xi_2}^\beta m({\xi_1}, {\xi_2})\right| \leq C_{\alpha, \beta}(|{\xi_1}|+|{\xi_2}|)^{-(|\alpha|+|\beta|)}
\end{equation}
for all $|\alpha|+|\beta| \leq s$ and $({\xi_1}, {\xi_2}) \in \mathbb{R}^{2 n} \backslash\{0\}$. The bilinear Fourier multiplier operator $T_m$ is defined by
$$
T_m(f, g)(x)=\frac{1}{(2 \pi)^{2 n}} \int_{\mathbb{R}^n} \int_{\mathbb{R}^n} e^{i x \cdot({\xi_1}+{\xi_2})} m({\xi_1}, {\xi_2}) \hat{f}({\xi_1}) \hat{g}({\xi_2}) d {\xi_1} d{\xi_2}.
$$
It is clear that if $\sigma$ is independent of $x$ in multilinear pseudo-differential operator $T_\sigma$, that is $\sigma(x,\vec{\xi})=m(\vec{\xi}),$ then multilinear pseudo-differential operators are Fourier multipliers.\par
As in \cite[Theorem 5.10]{bui}, by Theorems \ref{thm1.1} and \ref{thm1.3}, we obtain the following Theorem \ref{thm5.1}. We omit the details.
\begin{theorem}\label{thm5.1}
Assume that (\ref{con1}) holds for some $n+1 \leq s \leq 2 n$. Let $2 n / s<p_0$ and $\mathbb{X} $ be a rearrangement invariant quasi-Banach function space with $p$-convex for some $0<p\leq 1.$ For $i=1,2$, let $\mathbb{X}_i$ be rearrangement invariant quasi-Banach function spaces with
$p_0<p_{\mathbb{X}},p_{\mathbb{X}_i}\leq q_{\mathbb{X}}, q_{\mathbb{X}_i}<\infty$ and each of them is $p_i$-convex for some $0<p_i \leq 1.$ If $P_m$ maps
$\overline{\mathbb{X}}_1\times \overline{\mathbb{X}}_2$ to $\overline{\mathbb{X}},$ then for every
$ w \in A_{\min\limits_{i}{\{p_{\mathbb{X}_i}\}}/{p_0}},$ there exist $q_1, q_2>1,$ such that

\begin{equation*}
\begin{aligned}
\left\| T_{m}(\vec{f})\right\|_{\mathbb{X}(w)} \lesssim [w]_{A_\infty}^{\frac{1}{p}}  \prod\limits_{i=1}\limits^2 [w]_{A_{p_{\mathbb{X}_i}/{p_0}}}^{\frac{1}{p_0q_i}} \left\| f_i\right\|_{\mathbb{X}_i(w)}.
\end{aligned}
\end{equation*}
\end{theorem}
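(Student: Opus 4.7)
\textbf{Proof proposal for Theorem \ref{thm5.1}.} The plan is to mimic the argument of Theorem \ref{thm1.3}, but with the pointwise sparse bound of Lemma \ref{lem1.1} replaced by an $L^{p_0}$-type sparse domination that is available for Fourier multipliers whose symbol has only the limited smoothness $s\in[n+1,2n]$. Throughout, let $\mathcal M_{p_0}(\vec f)(x):=\sup_{Q\ni x}\prod_{i=1}^{2}\langle|f_i|^{p_0}\rangle_Q^{1/p_0}$, write $\mathbb Y:=\mathbb X^{1/p}$ (a RIBFS by $p$-convexity of $\mathbb X$), and set $q_i:=p_{\mathbb X_i}/p_0>1$.

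\emph{Step 1 (Sparse domination of $L^{p_0}$ type).} I would first establish that, for any compactly supported $f_1,f_2$, there is a sparse family $\mathcal S$ such that
\begin{equation*}
|T_m(\vec f)(x)| \lesssim \mathcal{A}_{\mathcal{S},p_0}(\vec f)(x):=\sum_{Q\in\mathcal S}\prod_{i=1}^{2}\langle |f_i|^{p_0}\rangle_Q^{1/p_0}\chi_Q(x)\quad \text{a.e.\ }x\in\mathbb R^n.
\end{equation*}
The hypothesis $2n/s<p_0$ combined with (\ref{con1}) is precisely what powers this type of bound via a Calder\'on--Zygmund decomposition of the rough kernel; the bilinear version in the spirit of \cite{bui} furnishes exactly the required statement.

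\emph{Step 2 (Dualization and reduction to a multilinear maximal).} Using $\|f\|_{\mathbb X(w)}^{p}=\|f^{p}\|_{\mathbb Y(w)}$ and the Lorentz--Luxemburg theorem,
\begin{equation*}
\|T_m(\vec f)\|_{\mathbb X(w)}^{p}\simeq \sup_{\|g\|_{\mathbb Y'(w)}\le 1}\int_{\mathbb R^n}|T_m(\vec f)(x)|^{p}g(x)w(x)\,dx.
\end{equation*}
Inserting the bound of Step 1, raising to the $p$-th power (permissible since $p\le 1$) and performing the same H\"older splitting used in the proof of Theorem \ref{thm1.3}, each term is controlled by
$\bigl(\tfrac{1}{w(Q)}\int_Q(\mathcal M_{p_0}(\vec f))^{p/2}(M_w^{\mathcal D}g)^{1/2}w\bigr)^{2}w(Q)$. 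The counting estimate (\ref{ie3.11}) and the Carleson embedding theorem then produce the factor $[w]_{A_\infty}$, and a final application of H\"older in $\mathbb Y(w)\times\mathbb Y'(w)$ together with the boundedness of $M_w^{\mathcal D}$ on $\mathbb Y'(w)$ (which holds because $p_{\mathbb Y'}=q_{\mathbb X}/(q_{\mathbb X}-p)>1$) reduces matters to estimating $\|\mathcal M_{p_0}(\vec f)\|_{\mathbb X(w)}$.

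\emph{Step 3 (Multilinear maximal estimate).} Since $\mathcal M_{p_0}(\vec f)\le P_m(M_{p_0}f_1,M_{p_0}f_2)$ pointwise, the assumed boundedness of $P_m$ (transferred from $\overline{\mathbb X}_i$ to $\mathbb X_i(w)$ in the manner of \cite[Corollary 6.11]{cur}) gives $\|\mathcal M_{p_0}(\vec f)\|_{\mathbb X(w)}\lesssim\prod_i\|M_{p_0}f_i\|_{\mathbb X_i(w)}$. Writing $M_{p_0}f=(M|f|^{p_0})^{1/p_0}$ and applying Lemma \ref{lem1.2} on the rescaled RIQBFS $\mathbb X_i^{1/p_0}$ (whose lower Boyd index $p_{\mathbb X_i}/p_0=q_i$ is larger than $1$ under our assumption $p_0<p_{\mathbb X_i}$) yields
\begin{equation*}
\|M_{p_0}f_i\|_{\mathbb X_i(w)}\lesssim [w]_{A_{p_{\mathbb X_i}/p_0}}^{1/(p_0q_i)}\|f_i\|_{\mathbb X_i(w)}.
\end{equation*}
Collecting all constants and taking a $p$-th root delivers the stated inequality.

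\emph{Main obstacle.} The principal difficulty is Step 1: establishing a bilinear $L^{p_0}$-type sparse bound under only (\ref{con1}) with $s$ as small as $n+1$ is delicate because the Mikhlin condition is too weak to force pointwise H\"older continuity of the kernel, so the usual sparse-domination machinery of Lemma \ref{lem1.1} does not apply; one must instead exploit the sharp $L^{p_0}\times L^{p_0}\to L^{p_0/2,\infty}$-type endpoint behaviour of $T_m$, which is what the hypothesis $2n/s<p_0$ encodes. Once that bound is granted, Steps 2--3 are direct adaptations of the reasoning already carried out in Theorems \ref{thm1.1} and \ref{thm1.3}.
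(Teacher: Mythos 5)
Your proposal is correct and takes essentially the same approach as the paper, which merely cites the $L^{p_0}$-type bilinear sparse bound of \cite[Theorem 5.10]{bui} and then appeals to the arguments of Theorems \ref{thm1.1} and \ref{thm1.3}, omitting all details. Your Steps 2 and 3 spell out precisely those omitted details, and the choice $q_i = p_{\mathbb{X}_i}/p_0 > 1$ in Step 3 (applying Lemma \ref{lem1.2} on the rescaled space $\mathbb{X}_i^{1/p_0}$) recovers the stated exponent $1/(p_0 q_i) = 1/p_{\mathbb{X}_i}$.
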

\subsection{ Multilinear square functions}
This subsection is devoted to establishing modular inequality estimates for multilinear square functions, and we start with some simple definitions. \par
Let $\psi(x, \vec{y})$ be a locally integrable function defined away from the diagonal $x=y_1=$ $\ldots=y_m$ in $\mathbb{R}^{n \times(m+1)}$. We assume that $\psi(x, \vec{y})$ satisfies the following two size and smoothness conditions.
\par

Size condition: There exist $\delta ,A >0$ such that
$$
|\psi(x, \vec{y})| \leq \frac{A}{\left(1+\left|x-y_1\right|+\cdots+\left|x-y_m\right|\right)^{m n+\delta}}
$$\par
Smoothness conditions: There exist $\delta ,A, \gamma>0$ so that
$$
|\psi(x, \vec{y})-\psi(x+h, \vec{y})| \leq \frac{A|h|^\gamma}{\left(1+\left|x-y_1\right|+\cdots+\left|x-y_m\right|\right)^{m n+\delta+\gamma}},
$$
with $|h|<\frac{1}{2} \max\limits_j\left|x-y_j\right|$, and
\begin{small}
$$
\left|\psi\left(x, y_1, \ldots, y_i, \ldots, y_m\right)-\psi\left(x, y_1, \ldots, y_i+h, \ldots, y_m\right)\right| \leq \frac{A|h|^\gamma}{\left(1+\left|x-y_1\right|+\cdots+\left|x-y_m\right|\right)^{m n+\delta+\gamma}},
$$
\end{small}
with $|h|<\frac{1}{2}\left|x-y_i\right|$ for $i =1, \ldots, m$.\par
Let $\vec{f}=\left(f_1, \ldots, f_m\right) \in \mathcal{S}\left(\mathbb{R}^n\right) \times \cdots \times \mathcal{S}\left(\mathbb{R}^n\right)$ and $x \notin \bigcap_{j=1}^m \operatorname{supp} f_j,$ we define
$$
\psi_t(\vec{f})(x)=\frac{1}{t^{m n}} \int_{\left(\mathbb{R}^n\right)^m} \psi\left(\frac{x}{t}, \frac{y_1}{t}, \ldots, \frac{y_m}{t}\right) \prod_{j=1}^m f_j\left(y_j\right) d y_j
$$

According to $\psi(x, \vec{y}),$ we may define the multilinear square function $g_{\lambda, \psi}^*$ of the following form with $\lambda>2 m, \alpha>0.$
$$
g_{\lambda, \psi}^*(\vec{f})(x)=\left(\int_{\mathbb{R}_{+}^{n+1}}\left(\frac{t}{t+|x-y|}\right)^{n \lambda}\left|\psi_t(\vec{f})(y)\right|^2 \frac{d y d t}{t^{n+1}}\right)^{1 / 2}.
$$
Using \cite[Theorem 1.2]{bui1}, we obtain the modular inequality of the following form for $g_{\lambda, \psi}^*.$
\begin{theorem}\label{thm5.2}
Assume that $\lambda >2m.$  Let~$\phi$ be a~$N-$function with sub-multiplicative property. If $1<i_{\phi}<\infty$ then there exist constants~$\alpha, a_3,$ such that for
every~$1<q<{i_{\phi}}$ and ~$w\in A_q ,$
\begin{equation*}
\int_{\mathbb{R}^n}\phi(|g_{\lambda, \psi}^*(\vec{f})(x)|)w(x)dx\leq C(\phi,w)
 \left(\prod\limits_{i=1}^{m}\int_{\mathbb{R}^n}\phi^m(|f_i(x)|)w(x)dx \right)^{\frac{1}{m}},
\end{equation*}
and if $i_\phi =1, w\in A_1,$
$$\sup\limits_\lambda \phi(\lambda)w\left(\{x\in \mathbb{R}^n: |g_{\lambda, \psi}^*(\vec{f})(x)|>\lambda\}\right)^m\leq C\prod\limits_{i=1}^{m}\int_{\mathbb{R}^n}\phi(|f_i(x)|)w(x)dx,$$
where
\begin{equation*}
C(\phi, w)= \begin{cases}[w]_{A_{\infty}}^{1+\alpha C_{1}}, &  a_{3}[w]_{A_{q}}^{\frac{1}{q}}\leq 2, \\ [w]_{A_{\infty}}^{1+\alpha C_{1}}[w]_{A_{q}}^{\frac{mC_1}{q }}, & a_{3}[w]_{A_{q}}^{\frac{1}{q}}> 2.\end{cases}
\end{equation*}
\end{theorem}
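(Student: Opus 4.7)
My plan is to reduce Theorem \ref{thm5.2} to the machinery already built in Section~5 for the pseudo-differential operator $T_\sigma$, so the whole argument runs parallel to the proof of Theorem \ref{thm1.5}. The single ingredient that is genuinely new here is a pointwise sparse domination for the multilinear square function $g_{\lambda,\psi}^*$. Under the assumption $\lambda>2m$, Theorem~1.2 of \cite{bui1} provides exactly this: for each compactly supported $\vec f$ there exist finitely many sparse families $\mathcal S^{(k)}$ such that
\[
\bigl|g_{\lambda,\psi}^*(\vec f)(x)\bigr|\lesssim \sum_{k}\mathcal{A}_{\mathcal{S}^{(k)}}(\vec f)(x),\qquad \text{a.e. }x\in\mathbb{R}^n.
\]
Once this replacement for Lemma \ref{lem1.1} is in place, the remainder of the proof is purely structural.

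For the case $1<i_\phi<\infty$, I would first use sub-multiplicativity of $\phi$ together with the $\Delta_2$ condition (equivalent to $I_\phi<\infty$, automatic from the quasi-convex structure) to pass the finite sum above inside $\phi$; this gives
\[
\int_{\mathbb{R}^n}\phi\bigl(|g_{\lambda,\psi}^*(\vec f)(x)|\bigr)w(x)\,dx\;\lesssim\;\sum_{k}\int_{\mathbb{R}^n}\phi\bigl(\mathcal{A}_{\mathcal{S}^{(k)}}(\vec f)(x)\bigr)w(x)\,dx.
\]
Then Lemma \ref{lem4.3} controls each sparse modular by $[w]_{A_\infty}^{1+\alpha C_1}$ times the modular of $\mathcal{M}(\vec f)$, and Lemma \ref{lem4.2} with $r=1$ bounds the modular of $\mathcal M(\vec f)$ by the geometric mean of $\int\phi^m(a_3[w]_{A_q}^{1/q}|f_i|)w$. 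Splitting into the two regimes $a_3[w]_{A_q}^{1/q}\le 2$ and $a_3[w]_{A_q}^{1/q}>2$ and using \eqref{ie5.3} in the second regime produces the stated constant $C(\phi,w)$, exactly as in the proof of Theorem \ref{thm1.5}.

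For the endpoint case $i_\phi=1$ with $w\in A_1$, I would follow the extrapolation route used in the second half of the proof of Theorem \ref{thm1.5}. From the sparse domination and Lemmas \ref{lem4.3}, \ref{lem4.1} one obtains the quasi-Banach estimate
\[
\bigl\|\phi\bigl(|g_{\lambda,\psi}^*(\vec f)|\bigr)\bigr\|_{\mathbb{X}(w)}\lesssim \bigl\|\phi\bigl(\mathcal M(\vec f)\bigr)\bigr\|_{\mathbb{X}(w)}
\]
for all RIQBFS $\mathbb{X}$ with $q_\mathbb{X}<\infty$, via the extrapolation theorem \cite[Theorem 3.1]{cur}. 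Specializing to $\mathbb{X}=L^{1/m,\infty}$, the weak-type bound
\[
\phi\bigl(\mathcal M(\vec f)(x)\bigr)\le \mathcal M_w^{\mathcal{D}}\bigl(\phi([w]_{A_1}f_1),\ldots,\phi([w]_{A_1}f_m)\bigr)(x)
\]
from Lemma \ref{lem4.1} combined with the $L^1(w)\times\cdots\times L^1(w)\to L^{1/m,\infty}(w)$ boundedness of $\mathcal M_w^{\mathcal{D}}$ gives the desired weak endpoint estimate.

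The main obstacle is conceptual rather than computational: one must verify that the sparse domination supplied by \cite{bui1} for $g_{\lambda,\psi}^*$ is genuinely of the form $\mathcal{A}_{\mathcal{S}}(\vec f)$ with no extra loss (in particular, involving only first-order averages $\langle|f_i|\rangle_Q$ rather than $L^r$ averages with $r>1$). If the available domination only produces $\mathcal{A}_{\mathcal{S},r}(\vec f)=\sum_Q\prod_i\langle|f_i|^r\rangle_Q^{1/r}\chi_Q$ for some $r>1$, then the argument still goes through but with the restriction $r<i_\phi$ and with the constants recalibrated via Lemma \ref{lem4.2}; in that case the range of admissible $q$ shrinks accordingly and one recovers a statement analogous to Theorem \ref{thm1.6} rather than Theorem \ref{thm1.5}. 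Under the stated hypothesis $\lambda>2m$, the smoothness and size assumptions on $\psi$ ensure that the first-order sparse bound holds, and the proof is concluded as above.
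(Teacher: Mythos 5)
Your proposal is correct and follows essentially the same approach the paper takes, namely substituting the sparse domination of $g_{\lambda,\psi}^*$ from \cite[Theorem 1.2]{bui1} for Lemma \ref{lem1.1} and then running the argument of Theorem \ref{thm1.5} verbatim (Lemma \ref{lem4.3} to pass to $\mathcal M(\vec f)$, Lemma \ref{lem4.2} with $r=1$, the two-regime case split for $C(\phi,w)$, and the extrapolation-plus-$\mathcal M_w^{\mathcal{D}}$ weak-type argument for $i_\phi=1$). The paper states this theorem with only the sentence ``Using \cite[Theorem 1.2]{bui1}, we obtain the modular inequality of the following form,'' so your write-up supplies exactly the details that are being implicitly claimed; your closing caveat about the form of the sparse bound is well-placed, and the resolution you give (a square-function-type domination $g_{\lambda,\psi}^*(\vec f)^2\lesssim\sum_Q\prod_i\langle|f_i|\rangle_Q^2\chi_Q$ still implies the first-order bound $g_{\lambda,\psi}^*(\vec f)\lesssim\mathcal A_{\mathcal S}(\vec f)$ via $\bigl(\sum_Q a_Q^2\chi_Q\bigr)^{1/2}\le\sum_Q a_Q\chi_Q$) is the correct one.
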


\end{document}